\title[Distributed SNE Learning in Locally Coupled Network Games]{Distributed Stochastic Nash Equilibrium Learning in Locally Coupled Network Games with Unknown Parameters}
\newcites{Apndx}{References in Appendix}
\newtheorem{assumption}{Assumption}
\author{%
 \Name{Yuanhanqing Huang} \Email{huan1282@purdue.edu}\\
 \addr Elmore Family School of Electrical and Computer Engineering, Purdue University, USA
 \AND
 \Name{Jianghai Hu} \Email{jianghai@purdue.edu}\\
 \addr Elmore Family School of Electrical and Computer Engineering, Purdue University, USA
}
\newcommand{\norm}[1]{\lVert#1\rVert}
\newcommand{\zer}[1]{\text{Zer}(#1)}
\newcommand{\fix}[1]{\text{Fix}(#1)}
\newcommand{\blkd}[1]{\text{blkd}(#1)}
\newcommand{\expt}[2]{\mathbb{E}_{#1}[#2]}
\newcommand{\bexpt}[2]{\mathbb{E}_{#1}\big[#2\big]}
\DeclarePairedDelimiter\ceil{\lceil}{\rceil} 
\newcommand\ubar[1]{\stackunder[1.2pt]{#1}{\rule{.8ex}{.075ex}}}
\newcommand{\gra}[1]{\text{gra}(#1)}
\DeclareMathOperator{\dom}{dom}
\newcommand{\abs}[1]{\lvert #1 \rvert}
\newcommand{\RN}[1]{%
  \textup{\uppercase\expandafter{\romannumeral#1}}%
}
\newcommand\tsup[2][2]{%
 \def\useanchorwidth{T}%
  \ifnum#1>1%
    \stackon[-.5pt]{\tsup[\numexpr#1-1\relax]{#2}}{\scriptscriptstyle\sim}%
  \else%
    \stackon[.5pt]{#2}{\scriptscriptstyle\sim}%
  \fi%
}
\newcommand{\supsub}[3]{{#1}^{\scriptscriptstyle #2}_{\scriptscriptstyle #3}}
\DeclareMathOperator{\minimize}{minimize}
\DeclareMathOperator{\subj}{subject\:to}
\newcommand{\playerN}{\mathcal{N}}
\newcommand{\edgeE}{\mathcal{E}}
\newcommand{\by}{\boldsymbol y}
\newcommand{\bone}{\boldsymbol 1}
\newcommand{\bzero}{\boldsymbol 0}
\newcommand{\btau}{\boldsymbol \tau}
\newcommand{\rset}[2]{\mathbb{R}^{#1}_{#2}}
\newcommand{\pdset}[2]{\mathbb{S}^{#1}_{#2}}
\newcommand{\crset}[2]{\overline{\mathbb{R}}^{#1}_{#2}}
\newcommand{\nset}[2]{\mathbb{N}^{#1}_{#2}}
\newcommand{\neighbN}[2]{\mathcal{N}^{#1}_{#2}}
\DeclareMathOperator{\argmin}{argmin}
\newcommand{\blambda}{\boldsymbol \lambda}
\newcommand{\diagA}{\Lambda}
\newcommand{\pspace}{\mathcal{K}}
\DeclareMathOperator{\proj}{Pj}
\newcommand{\res}[1]{\text{res}(#1)}
\DeclareMathOperator{\optT}{\mathbb{T}}
\DeclareMathOperator{\optA}{\mathbb{A}}
\DeclareMathOperator{\optB}{\mathcal{B}}
\DeclareMathOperator{\gjacob}{\mathbb{F}}
\DeclareMathOperator{\extgjacob}{\tilde{\mathbb{F}}}
\begin{document}

\maketitle

\begin{abstract}%

In stochastic Nash equilibrium problems (SNEPs), it is natural for players to be uncertain about their complex environments and have multi-dimensional unknown parameters in their models. 
Among various SNEPs, this paper focuses on locally coupled network games where the objective of each rational player is subject to the aggregate influence of its neighbors. 
We propose a distributed learning algorithm based on the proximal-point iteration and ordinary least-square estimator, where each player repeatedly updates the local estimates of neighboring decisions, makes its augmented best-response decisions given the current estimated parameters, receives the realized objective values, and learns the unknown parameters. 
Leveraging the Robbins-Siegmund theorem and the law of large deviations for M-estimators, we establish the almost sure convergence of the proposed algorithm to solutions of SNEPs when the updating step sizes decay at a proper rate. 
\end{abstract}

\thispagestyle{plain} 

\section{Introduction}

Nash equilibrium problems, rooted in the seminal work by \cite{nash1950equilibrium}, model and describe the interactions among multiple decision-makers or players where they aim at optimizing their own payoffs given the strategies of others. 
In a stochastic Nash equilibrium problem (SNEP), players take the uncertainty in their payoffs into account when deciding on their actions (\cite{facchinei2010generalized, shanbhag2006decomposition}). 
This type of problem can be applied to model a considerable number of applications such as power markets (\cite{kannan2011strategic,kannan2013addressing}), engagement of multiple humanitarian organizations in disaster relief (\cite{nagurney2020stochastic}), and the traffic assignment of strategic risk-averse users (\cite{nikolova2014mean}), to name a few. 

The past decade has witnessed significant progress in the distributed solution of Nash equilibrium problems (NEPs) under both deterministic and stochastic setups (\cite{yi2019operator, pavel2019distributed, bianchi2020fast, shi2017lana}). 
It is often assumed that either each player is allowed to communicate with the players affecting its objective or each player maintains a local estimate of the decisions of all the other players. 
To model network games with better scalability, considerable effort has been spent on studying network games with special structures, such as average aggregative games (AAGs) and network aggregative games (NAGs) (\cite{parise2021analysis}). 
Here, we focus on locally coupled network games, where the objective of each player depends on its own decision and some linear transformation of the decisions of its neighbors, determined by an underlying communication network. 
In particular, if the influence of its neighbors can be expressed as a convex combination of their decisions, it corresponds to NAGs, the properties and distributed solutions of which have been extensively investigated in \cite{parise2015network,parise2020distributed}.

Most of the aforementioned work designs distributed solutions under the fundamental assumption that each player has perfect knowledge of the payoff function. 
Nevertheless, in general, players may not be perfectly aware of their environments, and the outcomes of their actions may not always coincide with the predictions (\cite{kirman1975learning, frydman1982towards}). 
Consequently, each player needs to modify its model or the parameters of its models in light of the observations it makes (\cite{esponda2021asymptotic}). 
On the other end of the spectrum, there is an emerging research interest in finding Nash equilibria through bandit/zeroth-order online learning schemes, assuming the players are completely oblivious to the game mechanism, perhaps even ignoring its existence (\cite{bravo2018bandit, heliou2021zeroth,tatarenko2020bandit}, etc.). 
In this work, we consider the setting where the players are aware of their own objectives' functional forms and feasibility constraints while uncertain about some parameters in their objectives, and they participate in sequential repetitions of the same game while learning the parameters over this process. 

To learn equilibria while confronted with unknown parameters, the authors of \cite{jiang2017distributed} present two distributed schemes to solve SNEP without observations of others' strategies. 
The first scheme is based on the stochastic gradient method which constructs the learning problems independent of the computation of Nash equilibria (NEs); the second scheme solves stochastic Nash-Cournot games via iterative fixed-point methods under a common knowledge assumption concerning the cost functions and strategy sets of their competitors. 
The authors of \cite{lei2020asynchronous} further extend the first scheme in \cite{jiang2017distributed} and design an asynchronous inexact proximal best-response solution to the unknown problems. 
Nevertheless, in most practical applications and online learning settings, the parameter learning and NE seeking processes tangle with each other and players are unwilling to share their local information over the whole network. 
On that account, the authors of \cite{meigs2017learning, meigs2019learning} instead consider the case where the parameter estimation process is intrinsically coupled with the strategy update process. 
Moreover, they postulate that each player can observe the necessary information for parameter learning (e.g. aggregates of neighbors' strategies in NAGs) without the common-knowledge assumption. 
The learning algorithm considered in this paper is similar to the one in \cite{meigs2019learning}, while we extend the results by proposing a solution that can handle a more general class of games with multi-dimensional unknown parameters in objectives.
Furthermore, we establish the convergence without requiring the contractiveness of the NE-seeking algorithm, and the theoretical analysis can be further extended to the solutions of generalized Nash equilibrium problems (GNEPs) \cite{facchinei2010generalized}. 

In this paper, we develop a distributed learning algorithm that guarantees almost-sure convergence to stochastic Nash equilibria (SNEs) in locally coupled network games with unknown parameters. 
We assume that after all players determine their decisions, each player can observe its own realized objective value and the decisions made by its neighbors.
At each iteration, every player selects its decision indicated by the solution of its augmented best-response function parameterized by the current parameter estimates along with some random exploration vector. 
Then each player receives feedback about the objective values and neighbors' decisions and updates its parameters via an ordinary least squares estimator (OLSE). 
Furthermore, unlike most of the existing work that enjoys contractive iterations in the NE seeking dynamics, the fixed-point iteration operator considered in this work only satisfies (quasi)nonexpansiveness, due to the partial-information setting (and the global resource constraints in GNEPs). 
By leveraging the Robbins-Siegmund theorem, we establish the main convergence theorem for the proposed algorithm and discuss the conditions needed to ensure the convergence to solutions. 
We derive an upper bound for the asymptotic convergence rate of the OLSE and discuss the proper choice of step sizes to guarantee the convergence. 
The technical proofs and complementary examples and discussions are included in \cite{huang2021distlearn}

\textit{Basic Notations:} For a set of matrices $\{V_i\}_{i \in S}$, we let $\blkd{V_1, \ldots, V_{|S|}}$ or $\blkd{V_i}_{i \in S}$ denote the diagonal concatenation of these matrices, $[V_1, \ldots, V_{|S|}]$ their horizontal stack, and $[V_1; \cdots; V_{|S|}]$ their vertical stack. 
For a set of vectors $\{v_i\}_{i \in S}$, $[v_i]_{i \in S}$ or $[v_1; \cdots; v_{|S|}]$ denotes their vertical stack. 
For a matrix $V$ and a pair of positive integers $(i, j)$, $[V]_{(i,j)}$ denotes the $i$th row and the $j$th column of $V$. 
For a vector $v$ and a positive integer $i$, $[v]_i$ denotes the $i$th entry of $v$. 
Denote $\crset{}{} \coloneqq \rset{}{} \cup \{+\infty\}$, $\rset{}{+} \coloneqq [0, +\infty)$, and $\rset{}{++} \coloneqq (0, +\infty)$. 
$\pdset{n}{+}$ (resp. $S^n_{++}$) represents the set of all $n\times n$ symmetric positive semi-definite (resp. definite) matrices.
$\iota_{\mathcal{S}}(x)$ is defined to be the indicator function of a set $\mathcal{S}$, i.e., if $x \in \mathcal{S}$, then $\iota_{\mathcal{S}}(x) = 0$; otherwise, $\iota_{\mathcal{S}}(x) = +\infty$. 
$N_{S}(x)$ denotes the normal cone to the set $S \subseteq \rset{n}{}$ at the point $x$: if $x \in S$, then $N_S(x) \coloneqq \{u \in \rset{n}{} \mid \sup_{z \in S} \langle u, z-x \rangle \leq 0 \}$; otherwise, $N_S(x) \coloneqq \varnothing$. 
If $S \in \rset{n}{}$ is a closed and convex set, the map $\proj_S:\rset{n}{} \to S$ denotes the projection onto $S$, i.e., $\proj_S(x) \coloneqq \argmin_{v \in S} \norm{v - x}_2$. 
We use $\rightrightarrows$ to indicate a point-to-set map. 
For an operator $T: \rset{n}{} \rightrightarrows \rset{n}{}$, $\zer{T} \coloneqq \{x \in \rset{n}{} \mid Tx \ni \bzero\}$ and $\fix{T} \coloneqq \{x \in \rset{n}{} \mid Tx \ni x\}$ denote its zero set and fixed point set, respectively. 
We denote $\dom(T)$ the domain of the operator $T$ and $\gra{T}$ the graph of it.
The resolvent and reflected resolvent of $T$ are defined as $J_T \coloneqq (I + T)^{-1}$ and $R_T \coloneqq 2J_T - I$, respectively.

\section{Preliminaries of Stochastic Locally Coupled Network Games}

\subsection{Formulation of Locally Coupled Network Games}

We consider a game played among a group of self-interested players indexed by $\playerN \coloneqq \{1, \ldots, N\}$, whose interactions are specified by an underlying communication network $\mathcal{G} = (\mathcal{N}, \mathcal{E})$. 
We use $(i, j)$ to denote a directed edge having player $i$ as its tail and $j$ as its head. 
Although each edge $e\in \mathcal{E}$ admits certain direction, the communication through the edge $e$ are undirected, i.e., each player $i$ can send messages to both its in-neighbors $\neighbN{+}{i} \coloneqq \{j \in \playerN \mid (j,i) \in \mathcal{E}\}$ and out-neighbors $\neighbN{-}{i} \coloneqq \{j \in \playerN \mid (i,j) \in \mathcal{E}\}$, the cardinalities of which are denoted by $N^+_i$ and $N^-_i$, respectively. 

\begin{assumption}{(Communicability)}\label{asp:commtopo}
The underlying communication graph $\mathcal{G} = (\mathcal{N}, \edgeE)$ is undirected and connected. Furthermore, it has no self-loops. 
\end{assumption}

The goal of each player $i$ is to minimize an expected-value objective defined by $\mathbb{J}_i(x_i; x^+_i, w^*_i) \coloneqq \expt{}{J_i(x_i; s_i(x^+_i; \xi_i, w^*_i))}$ which depends on its own decision $x_i \in \rset{n_i}{}$ and the decisions of its in-neighbors $x^+_i \coloneqq [x_{j}]_{j \in \neighbN{+}{i}}$. 
It is worth mentioning that $x^+_i$ are treated as parametric inputs of $\mathbb{J}_i$.
We use $s_i$ to denote the neighboring aggregate function $s_i(x^+_i; \xi_i, w^*_i) \coloneqq w^*_{ii} + \sum_{j \in \neighbN{+}{i}}w^{*T}_{ji}x_j + \xi_i$, where $w^*_{ii} \in \rset{}{}$ and $w^*_{ji} \in \rset{n_j}{}$ are some constant parameters, and we let the random variable $\xi_i: \Omega \to \rset{}{}$ capture uncertainty in $s_i$. 
The local decision $x_i$ made by player $i$ is subject to a set of local feasibility constraints $\mathcal{X}_i \subseteq \rset{n_i}{}$. 
We further define $w^*_i \coloneqq [w^*_{ji}]_{j \in \{i\} \cup \neighbN{+}{i}}$, $w^* \coloneqq [w^*_i]_{i \in \playerN}$, $n\coloneqq \sum_{i \in \playerN} n_i$, ${n}^+_{i} = \sum_{j \in \neighbN{+}{i}}n_j$, and $\mathcal{X} \coloneqq \prod_{i \in \playerN} \mathcal{X}_i$. 
The feasible parameter set of player $i$ is denoted by $\mathcal{W}_i$, and $w^*_i \in \mathcal{W}_i$. 
Altogether, the local stochastic optimization problem of player $i$ can be formally written as:
\begin{equation}\label{eq:pf-optprob}
\minimize_{x_i \in \mathcal{X}_i} \mathbb{J}_i(x_i; x^+_i, w^*_i)
\end{equation}

The solution concept of the problem described in \eqref{eq:pf-optprob} we focus on in this paper is stochastic Nash equilibria (SNEs) (\cite{ravat2011characterization}), whose definition is given as follows:
\begin{definition}
The collective decision $x^* \in \mathcal{X}$ is an SNE if no player can benefit by unilaterally deviating from $x^*$, i.e., $\forall i \in \playerN$, $\mathbb{J}_i(x^*_i; x^{*+}_{i}, w^*_i) \leq \mathbb{J}_i(x_i; x^{*+}_{i}, w^*_i)$ for any $x_i \in \mathcal{X}_i$. 
\end{definition}

We then make the following regularity assumptions concerning the objective functions, feasible sets, and solution sets.
In particular, Assumption~\ref{asp:obj} (iii) is imposed to facilitate our later analysis regarding the convergence of the parameter learning and SNE seeking algorithm. 
\begin{assumption}\label{asp:obj} (Local Objectives)
For each $i \in \playerN$, given any fixed sample $\omega_i \in \Omega_i$ and the precise parameters $w^*_i$, the scenario-based and expected-value objectives $J_i$ and $\mathbb{J}_i$ satisfy: 

(i) $J_i(x_i; s_i(x^{+}_{i}, \xi_i; w^*_i))$ is convex in $x_i$ given any fixed $x^{+}_{i}$; 

(ii) $J_i(x_i; s_i(x^{+}_{i}, \xi_i; w^*_i))$ is proper and lower-semicontinuous in $x_i$ and $x^{+}_{i}$; 

(iii) $\mathbb{J}_i$ can be written as $\mathbb{J}_i(x_i; x^{+}_{i}, \hat{w}_i) = \mathbbm{f}_i(x_i; x^{+}_{i}) + \mathbbm{g}_i(x_i; x^{+}_{i}, \hat{w}_i)$, where for any fixed $x^{+}_{i}$, $\mathbbm{f}_i$ is convex in $x_i$ and $\mathbbm{g}_i$ is differentiable in $x_i$. Moreover, $\mathbbm{g}_i$ is continuous in $\hat{w}_i$ and $\nabla_{x_i}\mathbbm{g}_i$ is Lipschitz in $\hat{w}_i$ with the constant $\alpha_{g,i}\norm{x^{+}_{i}}_2 + \beta_{g,i}$ ($\alpha_{g,i}, \beta_{g,i} \geq 0$) on $\mathcal{W}_i$ for any fixed $x_i \in \mathcal{X}_i$ and $x^{+}_{i}$. 
\end{assumption}
\begin{assumption}\label{asp:feaset} (Feasible Sets)
For each $i \in \playerN$, $\mathcal{X}_i$ is nonempty, compact, convex, and satisfies Slater's constraint qualification (CQ). 
\end{assumption}
\begin{assumption}\label{asp:exist-sol} (Existence of SNE)
The SNEP considered admits a nonempty set of SNEs.
\end{assumption}

By stacking the partial gradients $\partial_{x_i}\mathbb{J}_i(x_i; x^{+}_{i}, w^*_i)$, we can construct the so-called pseudo-gradient operator $\gjacob_{w^*}: \mathcal{X} \rightrightarrows \rset{n}{}$ as 
$\gjacob_{w^*}: x \mapsto [\partial_{x_i}\mathbb{J}_i(x_i; x^{+}_{i}, w^*_i)]_{i \in \playerN}$. 
This operator plays a significant role in regulating different types of games, analyzing the properties of solution sets, etc. 
Games with maximally monotone pseudo-gradient $\gjacob_{w^*}$ are called monotone games. 
As has been shown in \cite[Prop.~12.4, Sect.~12.2.3]{palomar2010convex}, to compute SNEs of \eqref{eq:pf-optprob}, we can instead solve the corresponding generalized variational inequality (GVI): 
find a pair of vectors $(x^*, g^*)$ such that $x^* \in \mathcal{X}$ and $g^* \in \gjacob_{w^*}(x^*)$ and $(x - x^*)^Tg^* \geq 0, \forall x \in \mathcal{X}.$
The Karush-Kuhn-Tucker (KKT) problem of the corresponding GVI can be written as follows:
\begin{equation}\label{eq:kkt-gvi}
\bzero \in \partial_{x_i}\mathbb{J}_i(x_i; x^{+}_{i}, w^*_i) + N_{\mathcal{X}_i}(x_i), \forall i \in \playerN, 
\end{equation}
under the proper CQ. 
To motivate our analysis, we briefly discuss one typical example of locally coupled network games below and another one in \cite[Appendix~G]{huang2021distributed}. 
\begin{example}(Scalar linear quadratic games (\cite{parise2019variational}))
There is a finite set of players indexed by $i = 1, \ldots, N$, each making a scalar non-negative bounded strategy $x_i$ to optimizing its quadratic objective $J_i(x_i; x^+_i) \coloneqq \frac{1}{2}(x_i)^2 + (K_i \sum_{j \in \neighbN{+}{i}}w_{ij}x_j - a_j)x_i$, where $K_i, a_i \in \rset{}{}$, and $w_{ij}$ indicates the influence of player $j$'s decision on the objective function of player $i$. 
This model has been applied to investigate various economic settings including the private provision of public goods and games with local payoff complementarities but global substitutability. 
For more examples of locally coupled network games satisfying the assumptions discussed, see \cite{parise2019variational, parise2021analysis} and the references therein. 
\end{example}

\subsection{Distributed Solution via Proximal-Point Algorithm with Precise Parameters}\label{subsect:dist-sol-ppa}

We start by proposing a distributed solution for the SNE problem with precise knowledge of the involved parameters. 
Some iterative algorithms for computing NEs with an emphasis on algorithms amenable to decomposition have been proposed, such as the best-response (BR) iteration \cite{meigs2019learning} and the proximal BR iteration \cite[Sec.~12.6]{palomar2010convex}. 
In these algorithms, the implementation of the fixed-point scheme can be carried out in a distributed and independent manner, and each player makes decisions based on others' decisions from the last iteration. 
Nevertheless, the convergence properties rely on the contractiveness of their fixed-point iterations, which requires additional regularities on the objectives and network structures.  

To tackle a more general class of games, we design the fixed-point iteration in light of the proximal-point algorithm (PPA) and the Krasnosel'skii-Mann algorithm (KM) \cite[Thm.~23.41, Thm.~5.15]{BauschkeHeinzH2017CAaM}. 
Given a finite-dimensional maximally monotone operator $A$, the resolvent $J_{A} \coloneqq (I + A)^{-1}$ is firmly nonexpansive. 
Combining PPA and KM yields the following fixed-point iteration prototype 
$x^{(k+1)} \coloneqq x^{(k)} + \gamma^{(k)}(J_{A}x^{(k)} - x^{(k)})$, 
which will generate a sequence converging to a point in $\zer{A}$, the zero set of $A$. 
Here, $(\gamma^{(k)})_{k \in \nset{}{}}$ satisfies $\gamma^{(k)} \in [0, 1]$ and $\sum_{n \in \nset{}{}} \gamma^{(k)}(1 - \gamma^{(k)}) = +\infty$. 

For each player $i \in \playerN$, we endow it with a local estimate $y^j_i$ for the decision of each of its in-neighbors $j \in \neighbN{+}{i}$. 
In what follows, we use $y^i_i$ to denote the local decision of player $i$, ${y}^+_i \coloneqq [y^j_i]_{j \in \neighbN{+}{i}}$ the stack of the local estimates of its in-neighbors' decisions, and $y^{-}_{i} \coloneqq [y^i_j]_{j \in \neighbN{-}{i}}$ the stack of local estimates of $y^i_i$ maintained by player $i$'s out-neighbors. 
Let $y_i \coloneqq [y^i_i; [y^j_i]_{j \in \neighbN{+}{i}}]$, and $y \coloneqq [y_i]_{i \in \playerN}$. 
The feasible region of the stack vector $y_i$ is given by $\tilde{\mathcal{X}}_i \coloneqq \mathcal{X}_i \times \rset{n^+_i}{}$. 
Let $\tilde{\mathcal{X}} \coloneqq \prod_{i \in \playerN} \tilde{\mathcal{X}}_i \subseteq \rset{\tilde{n}}{} $, where $\tilde{n} \coloneqq n + \sum_{i \in \playerN} n^+_i$. 
With the introduction of local estimates, we can construct the extended pseudo-gradient $\extgjacob_{w^*}: \tilde{\mathcal{X}} \rightrightarrows \rset{n}{}$ as $\extgjacob_{w^*}: y \mapsto [\partial_{y^i_i}\mathbb{J}_i(y^i_i; y^+_i, w^*_i)]_{i \in \playerN}$, and the selection matrix $\mathcal{R}$ as $\mathcal{R} \coloneqq \blkd{\{\mathcal{R}_i\}_{i \in \playerN}}$, where each $\mathcal{R}_i \coloneqq [I_{n_i}, \bzero_{n_i \times n^+_i}]$. 

The use of local estimate $y^j_i$ can be interpreted as introducing a "pseudo-player" $j_i$ into the network. 
Each pseudo-player $j_i$ is connected to player $j$, where player $j$ and pseudo-player $j_i$ for all $i \in \neighbN{-}{j}$ constitute a connected component. 
We then conceptually disconnect the edges in $\edgeE$, which gives rise to a new dependency network $\tilde{\mathcal{G}}$ with $N$ such connected components. 
Let $L$ denote the Laplace matrix of the network $\tilde{\mathcal{G}}$, which has eigenvalue zero with multiplicity $N$ and other eigenvalues greater than zero. 
Each zero eigenvalue is associated with an eigenvector corresponding to the consensus within a connected component. 
By further extending each entry of $L$ to a square matrix with proper dimension, we obtain a square matrix $\tilde{L} \in \rset{\tilde{n} \times \tilde{n}}{}$ that we can leverage to obtain a compact form of the fixed-point iteration. 
We define the following operator $\optT$ whose zeros correspond to exactly the SNEs of \eqref{eq:pf-optprob}: 
\begin{align}
\begin{split}
\optT: 
y \mapsto \partial ( \sum_{i \in \playerN} \mathbb{J}_i(y^i_i; {y}^+_i, w^*_i) + \iota_{\mathcal{X}_i}(y^i_i)) 
 + \rho \tilde{L}y = \mathcal{R}^T\extgjacob_{w^*}(y) + N_{\tilde{\mathcal{X}}}(y) + \rho \tilde{L}y,
\end{split}
\end{align}
where $\rho \in \rset{}{++}$ is a constant controlling the contribution of local estimation errors. 
The formal statement of the equivalence is given below and the proof is reported in \cite[Appendix~A]{huang2021distlearn}. 
\begin{theorem}\label{thm:eq-prob}
Suppose Assumptions~\ref{asp:commtopo} to \ref{asp:feaset} hold, and there exists $y^* \in \zer{\optT}$. Then $\tilde{L}y^* = \bzero$ and the tuple $\{y^{i*}_i\}_{i \in \playerN}$ satisfies the KKT conditions \eqref{eq:kkt-gvi} for an SNE. 
Conversely, if the problem \eqref{eq:pf-optprob} has a solution $\{y^{i\dagger}_i\}_{i \in \playerN}$, then there exist local estimates $y^{j\dagger}_i$ such that their stack $y^\dagger \in \zer{\optT}$. 
\end{theorem}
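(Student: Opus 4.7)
The plan is to exploit the block structure of the inclusion $\bzero \in \optT y^*$ together with the kernel structure of the augmented Laplacian $\tilde{L}$. Unpacking the operator, there exist $g^* \in \extgjacob_{w^*}(y^*)$ and $n^* \in N_{\tilde{\mathcal{X}}}(y^*)$ with $\mathcal{R}^T g^* + n^* + \rho \tilde{L} y^* = \bzero$. The first step is to partition the coordinates of $y$ into the decision blocks $\{y^i_i\}_{i \in \playerN}$ and the local-estimate blocks $\{y^j_i : i \in \playerN, j \in \neighbN{+}{i}\}$. By construction of the selection matrix $\mathcal{R}$, the range of $\mathcal{R}^T$ lives entirely in the decision blocks; and because $\tilde{\mathcal{X}}_i = \mathcal{X}_i \times \rset{n^+_i}{}$, the normal cone $N_{\tilde{\mathcal{X}}}(y^*)$ vanishes on the local-estimate blocks. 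Restricting the inclusion to these blocks therefore forces the local-estimate coordinates of $\tilde{L}y^*$ to vanish.

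I would next lift this partial zero information to the full conclusion $\tilde{L}y^* = \bzero$. The graph $\tilde{\mathcal{G}}$ decomposes into $N$ connected components, each being a star whose center is some player $j$ and whose leaves are the pseudo-players $\{j_i : i \in \neighbN{-}{j}\}$. Within each such component all node variables share the common dimension $n_j$, so the block-Laplacian action there is simply the scalar star-Laplacian tensored with $I_{n_j}$. A direct calculation shows that if the star-Laplacian sends a vector to zero on its leaf coordinates, then the leaves must all coincide with the center, so the whole vector lies in the consensus subspace and is annihilated by the Laplacian. This yields $y^{j*}_i = y^{j*}_j$ for every $j \in \playerN$ and $i \in \neighbN{-}{j}$, hence $\tilde{L}y^* = \bzero$. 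After cancelling the Laplacian term, the inclusion collapses to $\bzero \in \mathcal{R}^T \extgjacob_{w^*}(y^*) + N_{\tilde{\mathcal{X}}}(y^*)$; extracting the decision-block components, and using the consensus identities to identify $y^{*+}_i$ with $x^{*+}_i = [y^{j*}_j]_{j \in \neighbN{+}{i}}$, recovers precisely the KKT system \eqref{eq:kkt-gvi}.

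For the converse, given a solution $\{y^{i\dagger}_i\}_{i \in \playerN}$ of \eqref{eq:pf-optprob}, I would supply the missing local estimates by copying the true decisions across the network: set $y^{j\dagger}_i := y^{j\dagger}_j$ for every $i \in \neighbN{-}{j}$. This choice places $y^\dagger$ in the kernel of $\tilde{L}$, so the Laplacian term in $\optT y^\dagger$ vanishes. Substituting back into $\mathcal{R}^T\extgjacob_{w^*}(y^\dagger) + N_{\tilde{\mathcal{X}}}(y^\dagger)$, the decision-block entries reproduce the KKT inclusion for $\{y^{i\dagger}_i\}$, and the local-estimate entries are satisfied trivially since the corresponding normal-cone component equals $\{\bzero\}$ and $\mathcal{R}^T$ contributes nothing there; hence $y^\dagger \in \zer{\optT}$.

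The main obstacle I anticipate is the careful bookkeeping behind the block-Laplacian step: one must verify that for the vector-valued $\tilde{L}$, partial nullity on the leaf coordinates genuinely forces full consensus rather than merely a weaker algebraic relation, and then confirm that the star decomposition into independent components remains compatible with the possibly different player dimensions $n_j$. Once that per-component reduction is made explicit, the remaining steps amount to re-indexing the inclusion along the split of $y$ into decision and estimate blocks.
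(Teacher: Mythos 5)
Your proposal is correct and follows essentially the same route as the paper's proof: read off consensus from the estimate rows of the inclusion (where $\mathcal{R}^T\extgjacob_{w^*}$ and $N_{\tilde{\mathcal{X}}}$ vanish), which forces $y^{j*}_i = y^{j*}_j$ and hence $\tilde{L}y^* = \bzero$, then extract the decision rows to recover the KKT system, and for the converse copy decisions into the estimates so the Laplacian term vanishes. Your explicit star-component argument (each leaf row of $\tilde{L}$ is just $y^j_i - y^j_j$ tensored with $I_{n_j}$) simply spells out the step the paper states tersely, and it is sound.
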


Since the matrix $\tilde{L}$ couples the updates of all $y_i$ and thus the resolvent of $\optT$ can not be computed distributedly, we introduce a design matrix $\Phi \coloneqq \btau^{-1} - \rho\tilde{L}$ and compute the resolvent of $\Phi^{-1}\optT$ instead, 
where $\btau \coloneqq \blkd{\btau_1, \ldots, \btau_N}$ and each  $\btau_i \coloneqq \blkd{\tau_{i0}\otimes I_{n_i}, \{\tau_{ij}\otimes I_{n_j}\}_{j \in \neighbN{+}{i}}} \in \pdset{n_i+n^+_i}{++}$ is a diagonal matrix with all diagonal entries (step sizes) positive. 
Moreover, to ensure that the zero set of $\optT$ and $\Phi^{-1}\optT$ are equivalent and the associated Hilbert space is well-defined, the selected step sizes in $\btau$ should be sufficiently small such that $\Phi$ is positive definite. 
According to the Gershgorin circle theorem (\cite{bell1965gershgorin}), for each player $i$, it suffices to set the step size of the local decision $\tau^{-1}_{i0} > 2\rho N^-_i$ and the step sizes of the local estimate $\tau^{-1}_{ij} > 2$. 
Let $\pspace$ be the Hilbert space obtained by endowing the vector space $\rset{\tilde{n}}{}$ with the inner product $\langle y, y' \rangle_\pspace = \langle \Phi y, y'\rangle$. 
With all the introduced elements, we can compute a zero of $\Phi^{-1}\optT$ by utilizing the following iteration: 
\begin{align}\label{eq:ppa-km}
\tilde{y}^{(k+1)} \coloneqq J_{\Phi^{-1}\optT}(y^{(k)}), \;y^{(k+1)} \coloneqq y^{(k)} + \gamma^{(k)}(\tilde{y}^{(k+1)} - y^{(k)}). 
\end{align}

The detailed implementation of \eqref{eq:ppa-km} consists of the optimization of augmented best-response objectives $\hat{\mathbb{J}}^{(k)}_i$'s and some linear updates, where $\hat{\mathbb{J}}^{(k)}_i(\tilde{y}^i_i; w^*_i) \coloneqq \mathbb{J}_i(\tilde{y}^i_i; \tilde{y}^{+(k+1)}_i, w^*_i) + \rho(\sum_{{j \in \neighbN{-}{i}}}{y}^{i(k)}_{i} - {y}^{i(k)}_{j})^T\tilde{y}^i_i + \frac{1}{2\tau_{i0}}\norm{\tilde{y}^{i}_{i} - {y}^{i(k)}_{i}}^2_2$,
which is omitted here for brevity.
In the following, we describe a modified version that will be used throughout the learning dynamics in Section~\ref{subsec:sne-mis-param}. 
With the introduction of the local estimates $y^j_i$ and the extended pseudogradient $\extgjacob_{w^*}$, the operator $\optT$ is no longer maximally monotone and the resolvent $J_{\Phi^{-1}\optT}$ does not possess the firmly nonexpansive property in general. 
We denote the greatest (resp. smallest) out-neighbor count in $\mathcal{G}$ by $\bar{N}^-$ (resp. $\ubar{N}^-$), i.e., $\bar{N}^- \coloneqq \max\{N^-_i: i \in \playerN\}$ (resp. $\ubar{N}^-\coloneqq \min\{N^-_i: i \in \playerN\}$). 
To prove the convergence for the iteration with precise parameters, we will need to impose the following assumption on the pseudo-gradient $\gjacob_{w^*}$ and the extended pseudo-gradient $\extgjacob_{w^*}$. 
\begin{assumption}{(Regularity of Pseudo-Gradient)}\label{asp:convg}
At least one of the following statements holds:

(i) The operator $\mathcal{R}^T\extgjacob_{w^*}+ \rho \tilde{L}$ is maximally monotone;

(ii) The pseudogradient $\gjacob_{w^*}$ is strongly monotone and Lipschitz continuous, i.e., there exist $\eta > 0$ and $\theta_1 > 0$, such that $\forall x, x' \in \rset{n}{}$, $\langle x - x', \gjacob_{w^*}(x) - \gjacob_{w^*}(x')\rangle \geq \eta \norm{x - x'}^2$ and $\norm{\gjacob_{w^*}(x) - \gjacob_{w^*}(x')} \leq \theta_1\norm{x - x'}$. The operator $\mathcal{R}^T\extgjacob_{w^*}$ is Lipschitz continuous, i.e., there exists $\theta_2 > 0$, such that $\forall y, y' \in \rset{nN}{}$, $\norm{\extgjacob_{w^*}(y) - \extgjacob_{w^*}(y')} \leq \theta_2 \norm{y - y'}$. 
Moreover, the weight of $\rho\tilde{L}$ satisfies $\rho \geq \frac{1}{\sigma_1}\big(\frac{\bar{N}^- + 1}{\ubar{N}^- + 1}\frac{(\theta_1 + \theta_2)^2}{4\eta} + \theta_2\big)$. 
\end{assumption}

From one perspective, the convergence result under Assumption~\ref{asp:convg} (i) directly follows from the monotone operator theory and firmly nonexpansive fixed-point iterations. 
Nevertheless, verifying the fulfillment of this assumption is cumbersome and often can only be done numerically (See \cite[Sec.~4.2.3]{johansson2012distributed} for examples).
From another perspective, for a monotone game, i.e., games with  monotone $\gjacob_{w^*}$, when restricted to the consensus subspace $y^j_i = y^j_j$ for all $i \in \playerN$ and $j \in \neighbN{+}{i}$, $\optT$ enjoys maximally monotonicity and its resolvent $J_{\Phi^{-1}\optT}$ possesses firmly nonexpansiveness. 
Controlling the growth rate of $\extgjacob_{w^*}$ w.r.t. local estimates with Lipschitz continuity, Assumption~\ref{asp:convg} (ii) extends the above statement and allows for a violation of the consensus constraints if the missing monotonicity of $\extgjacob$ can be compensated by the measure of violation $\frac{\rho}{2}y^T\tilde{L}y$. 
A weaker concept that emerges under (ii) is quasinonexpansiveness: a general operator $A$ is called quasinonexpansive if $\forall x \in \dom{A} $ and $\forall y \in \fix{A}$, $\norm{Ax - y} \leq \norm{x - y}$. 
Now we are ready to formulate the theorem providing sufficient conditions for the proposed algorithm to work with perfect information of its model parameters. 
The proof is reported in \cite[Appendix~A]{huang2021distlearn}. 

\begin{theorem}\label{thm:exact-convg-pf}
Suppose Assumptions~\ref{asp:commtopo} to \ref{asp:convg} hold, and $\btau$ is properly chosen such that $\Phi$ is positive definite.
Then $J_{\Phi^{-1}\optT}$ is a continuous quasinonexpansive operator, and the sequence $(y^{(k)})_{k \in \nset{}{}}$ generated by the fixed-point iteration \eqref{eq:ppa-km} will converge to a zero of $\optT$, where $\gamma^{(k)} \in [0, 1]$ and $\sum_{n \in \nset{}{}} \gamma^{(k)}(1 - \gamma^{(k)}) = +\infty$.
\end{theorem}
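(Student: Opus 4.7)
The plan is to show, under Assumption~\ref{asp:convg}, that $T \coloneqq J_{\Phi^{-1}\optT}$ is continuous and quasinonexpansive on the Hilbert space $\pspace$, and then to deduce convergence of \eqref{eq:ppa-km} via a Fej\'er-monotonicity argument standard for such operators in finite dimensions.

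Under Assumption~\ref{asp:convg}(i), $\mathcal{R}^T\extgjacob_{w^*} + \rho\tilde{L}$ is maximally monotone by hypothesis; combining it with the maximally monotone $N_{\tilde{\mathcal{X}}}$ (Slater's CQ from Assumption~\ref{asp:feaset}) through the sum rule yields maximal monotonicity of $\optT$ on $\rset{\tilde{n}}{}$. Since $\Phi$ is symmetric positive definite, $\Phi^{-1}\optT$ is maximally monotone in $\pspace$, so $T$ is firmly nonexpansive, in particular Lipschitz continuous and quasinonexpansive. Under Assumption~\ref{asp:convg}(ii), fix $y^* \in \zer{\optT}$, which by Theorem~\ref{thm:eq-prob} satisfies $\tilde{L}y^* = \bzero$, and let $\tilde{y} = Ty$. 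The defining inclusion $\Phi(y - \tilde{y}) \in \optT\tilde{y}$ together with $\bzero \in \optT y^*$, after pairing with $\tilde{y} - y^*$ in the Euclidean inner product and discarding the nonnegative normal-cone term using monotonicity of $N_{\tilde{\mathcal{X}}}$, yields
\begin{equation*}
\langle y - \tilde{y},\, \tilde{y} - y^*\rangle_\pspace \,\geq\, \langle \extgjacob_{w^*}(\tilde{y}) - \extgjacob_{w^*}(y^*),\, \mathcal{R}(\tilde{y} - y^*)\rangle + \rho\,(\tilde{y} - y^*)^T \tilde{L}\,(\tilde{y} - y^*).
\end{equation*}
I would decompose $\tilde{y} - y^*$ into a consensus part (on which $\extgjacob_{w^*}$ collapses to $\gjacob_{w^*}$, so strong monotonicity contributes $\eta\,\norm{\mathcal{R}(\tilde{y}-y^*)}^2$) and an orthogonal, consensus-violation part (bounded below in the Laplacian quadratic form by $\rho\sigma_1$ times its squared norm). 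The cross terms arising from replacing $\extgjacob_{w^*}$ by $\gjacob_{w^*}$ on the consensus subspace are controlled via the Lipschitz constants $\theta_1,\theta_2$, and a Young's inequality balances them against the two favorable terms; the neighbor-count ratio $(\bar{N}^-+1)/(\ubar{N}^-+1)$ surfaces when translating between the decision-space norm $\norm{\mathcal{R}(\cdot)}$ and the extended-space norm (each decision appearing with multiplicity tied to its outgoing-neighbor count). The $\rho$ bound in Assumption~\ref{asp:convg}(ii) is calibrated precisely so that the residual is nonnegative, giving $\langle y - \tilde{y}, \tilde{y} - y^*\rangle_\pspace \geq 0$ and, by polarization, $\norm{\tilde{y} - y^*}_\pspace^2 \leq \norm{y - y^*}_\pspace^2 - \norm{y - \tilde{y}}_\pspace^2$, i.e., quasinonexpansiveness. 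Continuity of $T$ in this regime follows because $\tilde{y}$ is determined by a strongly monotone variational inclusion (the proximal term $\btau^{-1}$ plus the coercivity just extracted) whose data depend Lipschitz-continuously on $y$.

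With $T$ continuous and quasinonexpansive in hand, the iteration \eqref{eq:ppa-km} generates a Fej\'er-monotone sequence relative to $\fix{T} = \zer{\optT}$. A standard telescoping estimate gives $\sum_k \gamma^{(k)}(1 - \gamma^{(k)})\,\norm{Ty^{(k)} - y^{(k)}}^2_\pspace < +\infty$, which combined with $\sum_k \gamma^{(k)}(1 - \gamma^{(k)}) = +\infty$ forces $\liminf_k \norm{Ty^{(k)} - y^{(k)}}_\pspace = 0$. Fej\'er monotonicity yields boundedness and hence a cluster point $y^\infty$; continuity of $T$ together with $Ty^{(k_n)} - y^{(k_n)} \to 0$ promotes $y^\infty$ to a fixed point (hence a zero of $\optT$); and Fej\'er monotonicity with respect to $y^\infty$ then upgrades subsequential to full convergence.

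The main obstacle is the computation under Assumption~\ref{asp:convg}(ii): extracting quasinonexpansiveness from an operator that is not globally monotone, and attaining the sharp coefficient in the $\rho$-bound, requires carefully tracking how consensus-subspace strong monotonicity, extended-space Lipschitz continuity, and the Laplacian spectral gap $\sigma_1$ trade off through Young's inequality, with the neighbor-count ratio being the signature of the norm translation between decision and extended spaces.
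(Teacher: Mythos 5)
Your proposal follows essentially the same route as the paper's proof: case (i) via maximal monotonicity and firm nonexpansiveness of the resolvent, and case (ii) via the consensus/orthogonal decomposition of $\tilde{y}-y^*$, strong monotonicity of $\gjacob_{w^*}$ on the consensus subspace, the Laplacian spectral gap $\sigma_1$, and Lipschitz/Young estimates producing the neighbor-count ratio and the stated $\rho$-bound, yielding $\langle y-Ty,\,Ty-y^*\rangle_\pspace\geq 0$ and hence quasinonexpansiveness, followed by the identical Fej\'er-monotone/residual-summability/cluster-point argument for convergence. The only cosmetic difference is continuity of $J_{\Phi^{-1}\optT}$: the paper invokes a generalized implicit function theorem (Kumagai), whereas you argue via continuity of the solution map of the per-player strongly convex proximal subproblems (note the "coercivity just extracted" is only relative to zeros, so it is the proximal term, not that coercivity, that does the work) — both are adequate.
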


\subsection{SNE Problems with Unknown Parameters}\label{subsec:sne-mis-param}

\begin{algorithm2e}
\SetAlgorithmName{Subroutine}{subroutine}{List of Subroutines}
\SetAlgoLined
\caption{Distributed Nash Equilibrium Seeking}\label{alg:node-edge}
\textbf{Available variables:} $\{{y}^{(k)}_{i}\}$, $\{\hat{w}^{(k)}_i\}$ \;
\textbf{At the $k$-th iteration, each player $i \in \playerN$:}\\
\qquad Receive $\{{y}^{j(k)}_j\}_{j \in \neighbN{+}{i}}$ from its in-neighbors and $\{{y}^{i(k)}_j\}_{j \in \neighbN{-}{i}}$ from its out-neighbors\;
\qquad $\tilde{y}^{j(k+1)}_{i} = {y}^{j(k)}_{i} - \tau_{ij}\rho({y}^{j(k)}_{i} - {y}^{j(k)}_{j}), \quad \forall j \in \neighbN{+}{i}$\;
\qquad $\tilde{y}^{i(k+1)}_{i} = \underset{\tilde{y}^{i}_{i} \in \mathcal{X}_i}{\argmin}\Big\{ \mathbb{J}_i(\tilde{y}^i_i; \tilde{y}^{+(k+1)}_i, \hat{w}^{(k)}_{i}) + \rho(\underset{j \in \neighbN{-}{i}}{\sum}{y}^{i(k)}_{i} - {y}^{i(k)}_{j})^T\tilde{y}^i_i + \frac{1}{2\tau_{i0}}\norm{\tilde{y}^{i}_{i} - {y}^{i(k)}_{i}}^2_2 \Big\}$\;
\qquad ${y}_i^{(k+1)} = {y}^{(k)}_i + \gamma^{(k)}(\tilde{y}^{(k+1)}_i - {y}^{(k)}_i)$.
\end{algorithm2e}

We now shift to the setting where at each iteration $k$, each player $i$ has no access to the precise parameter $w^*_i$ while it maintains a parameter estimate $\hat{w}^{(k)}_i \coloneqq [\hat{w}^{(k)}_{ji}]_{j \in \{i\} \cup \neighbN{+}{i}}$. 
We investigate the following simple learning dynamics where at each iteration, each player $i$ first makes a decision which is determined by the optimizer of the given augmented objective using the estimated parameters and certain random exploration factor, then observes the realized objective value and the decisions made by its neighbors, and finally updates the estimates the unknown parameters. 
Note that instead of minimizing the augmented objective $\hat{\mathbb{J}}^{(k)}_i(\tilde{y}^i_i; w^*_i)$, player $i$ now makes a best-response decision w.r.t. $\hat{\mathbb{J}}^{(k)}_i(\tilde{y}^i_i; \hat{w}^{(k)}_i)$. 
This substitution in parameters gives rise to an estimated operator $\optT^{(k)}$ for $\optT$, with $\extgjacob_{w^*}$ replaced by $\extgjacob_{\hat{w}^{(k)}}$. 
For notational brevity, we let the exact iterations be denoted by $\mathscr{R}_* \coloneqq J_{\Phi^{-1}\optT}$ and $\mathscr{P}_* \coloneqq I + \gamma^{(k)}(\mathscr{R}_* - I)$, and the estimated iterations (based on parameter estimates) be denoted by  $\mathscr{R}^{(k)} \coloneqq J_{\Phi^{-1}\optT^{(k)}}$ and $\mathscr{P}^{(k)} \coloneqq I + \gamma^{(k)}(\mathscr{R}^{(k)} - I)$. 
The SNE seeking dynamics is then described by $y^{(k+1)} \coloneqq \mathscr{P}^{(k)}y^{(k)}$, and the intermediate result is given by $\tilde{y}^{(k+1)} \coloneqq \mathscr{R}^{(k)}y^{(k)}$. 
The detailed implementation is included in Subroutine~\ref{alg:node-edge}.

\section{Parameter Learning Model}

In this section, we consider the proper way to generate the estimate sequence $(\hat{w}^{(k)}_i)_{k \in \nset{}{}}$ when each player $i$ has no clue or is uncertain about the parameters $w^{*}_{i}$ inside its objective $\mathbb{J}_i$. 
Assume that each player has access to a bandit feedback system, which returns the realized objective function value based on the decision profile of the whole player network. 
To enable each player to perform ordinary least squares estimation to learn $\hat{w}^{(k)}_i$ at each iteration $k$, we make the following assumption:
\begin{assumption}\label{asp:olse} (Parameter Learning)
For each player $i \in \playerN$ and at each iteration $k \in \nset{}{}$, the following conditions hold:

(i) $(\xi^{(k)}_{i})_{k \in \nset{}{}}$ is a sequence of real independent random variables with expectation zero and range bounded;


(ii) The scenario-based function $J_i$ is invertible in $s_i$; 

(iii) The feasible parameter set $\mathcal{W}_i \subseteq \rset{n^+_i+1}{}$ is convex and compact. In addition, for any $\hat{w}_i \in \mathcal{W}_i$, the augmented objective $\hat{\mathbb{J}}^{(k)}_i(\cdot; \hat{w}_i)$ is a strictly convex function on $\mathcal{X}_i$. 
\end{assumption}

Note that the uniformly ``thin'' tail of the random variable $\xi_{i}$ is essential to apply the large deviations result for later convergence analysis, and Assumption~\ref{asp:olse} (i) prescribes a sufficient condition where this requirement is satisfied. 
Condition (ii) enables each player to recover the values $s_i$ based on the observed objective values. 
Condition (iii) ensures that $\hat{\mathbb{J}}^{(k)}_i(\cdot; \hat{w}_i)$ with $\hat{w}_i \in \mathcal{W}_i$ admits a unique $\argmin$ solution and the resolvent $J_{\Phi^{-1}\optT}$ is well-defined and single-valued in $\pspace$. 

To estimate the unknown parameters, each player $i \in \playerN$ picks the pivot point ${y}^{i(k+1)}_{i} \in \mathcal{X}_i$ where this player seeks to observe its payoff and estimate the parameters. 
Moreover, player $i$ draws a random exploration vector $\delta^{(k)}_{i}: \Omega \to \rset{n_i}{}$ and plays $\check{y}^{i(k+1)}_{i} \coloneqq {y}^{i(k+1)}_i + \delta^{(k)}_{i}$.
The random vector $\delta^{(k)}_{i}$ should satisfy the following assumption. 

\begin{assumption}\label{asp:rdm-explr}(Random Exploration)
For each player $i\in \playerN$, $(\delta^{(k)}_{i})_{k \in \nset{}{}}$ is a sequence of independent identically distributed (i.i.d.) random variables with zero mean, bounded range, and positive definite covariance matrix. 
\end{assumption}

Nevertheless, the feasibility issue will arise with the introduction of the random exploration $\delta^{(k)}_i$. 
In the spirit of \cite{agarwal2010optimal, bravo2018bandit}, we assume in the following that each local feasible set $\mathcal{X}_i$ is a convex body in $\rset{n_i}{}$, i.e., it has a nonempty topological interior. 
Moreover, we will introduce a "safe net", and adjust the chosen pivot point ${y}^{i(k+1)}_i$ to reside within a suitably shrunk zone of $\mathcal{X}_i$. 
In details, let $\mathbb{B}_{r_i}(p_i)$ be an $r_i$-ball centered at some $p_i \in \mathcal{X}_i$ so that $\mathbb{B}_{r_i}(p_i) \subseteq \mathcal{X}_i$. 
Then, instead of directly perturbing ${y}^{i(k+1)}_i$ by $\delta^{(k)}_{i}$, we consider the feasibility adjustment
$\tilde{\delta}^{(k)}_{i} \coloneqq \delta^{(k)}_{i} - \bar{\delta}_ir_i^{-1}({y}^{i(k+1)}_i - p_i)$, 
where $\bar{\delta}_i \coloneqq \max\{\norm{\delta_i(\omega)}_2: \omega \in \Omega\}$ satisfies $\bar{\delta}_ir_i^{-1} < 1$. 
Each player $i$ plays ${y}^{i(k+1)}_{i} + \tilde{\delta}^{(k)}_{i}$ instead of ${y}^{i(k+1)}_i + \delta^{(k)}_{i}$. 
This adjustment moves each pivot $\mathcal{O}(\bar{\delta}_i)$-closer to the interior base point $p_i$ with  ${y}^{i(k+1)}_{i\delta} = {y}^{i(k+1)}_i - \bar{\delta}_ir_i^{-1}({y}^{i(k+1)}_i - p_i)$, and then perturbs ${y}^{i(k+1)}_{i\delta}$ by $\delta^{(k)}_{i}$. 
Given the fact that $\check{y}^{i(k+1)}_{i} \coloneqq {y}^{i(k+1)}_{i\delta} + \delta^{(k)}_{i} = (1 - \bar{\delta}_ir_i^{-1}){y}^{i(k+1)}_i + \bar{\delta}_ir_i^{-1}(p_i + r_i\bar{\delta}_i^{-1}\delta^{(k)}_{i}) \in \mathcal{X}_i$ and $p_i + r_i\bar{\delta}_i^{-1}\delta^{(k)}_{i} \in \mathbb{B}_{r_i}(p_i)$, feasibility of the query point is then ensured. 
After the above feasibility adjustment, let $s^{(k)}_{i} = w^*_{ii} + \sum_{j \in \neighbN{+}{i}}w^{*T}_{ji}\check{y}^{j(k+1)}_{j} + \xi^{(k)}_{i}$ and $\ell^{(k)}_i = [1; [\check{y}^{j(k+1)}_{j}]_{j \in \neighbN{+}{i}}]$. 
Based on the observed values available at the $k$-th iteration, the OLSE $\hat{w}^{(k+1)}_i$ is given by 
\begin{align}\label{eq:olse-linear}
\hat{w}^{(k+1)}_i \coloneqq \argmin_{w_i \in \mathcal{W}_i} \frac{1}{k+1}\sum_{t=0}^{k}
(s^{(t)}_{i} - \langle \ell^{(t)}_{i}, w_i\rangle)^2.
\end{align}
The complete parameter learning dynamics is given in Subroutine~\ref{alg:param-est}. 


\begin{algorithm2e}
\SetAlgorithmName{Subroutine}{subroutine}{List of Subroutines}
\SetAlgoLined
\textbf{At the $k$-th iteration:}\\
Each player $i \in \playerN$: \\
\qquad Randomly picks an exploration factor $\delta^{(k)}_{i}$;\\
\qquad Makes its decision to play $\check{y}^{i(k+1)}_{i} \coloneqq {y}^{i(k+1)}_i + \delta^{(k)}_{i} - \bar{\delta}_ir_i^{-1}({y}^{i(k+1)}_i - p_i)$; \\
\qquad Observes $J_i(\check{y}^{i(k+1)}_{i}; s^{(k)}_{i})$ and receives $\{\check{y}^{j(k+1)}_{j}\}_{j \in \neighbN{+}{i}}$ from its in-neighbors; \\
\qquad Estimates the unknown parameters $\hat{w}^{(k+1)}_i$ by solving \eqref{eq:olse-linear}. 
\caption{Unknown Parameter Estimation}
\label{alg:param-est}
\end{algorithm2e}


\section{Learning Dynamics and Convergence Analysis}\label{sec:convg-analy}

Assembling the updating steps of NE seeking and those of parameter estimation together, the learning dynamics of NE seeking with unknown parameters in objectives is described in Algorithm~\ref{alg:assemble}.

\begin{algorithm2e}
\SetAlgoLined
\textbf{Initialize:} $\{{y}^{(0)}_i\}$, $\{\hat{w}^{(0)}_i\}$ with ${y}^{(0)}_i \in \mathcal{X}_i$ and $\hat{w}^{(0)}_i \in \mathcal{W}_i$\; 
\textbf{Iterate until convergence}: \\
1) NE seeking updating step: run Subroutine~\ref{alg:node-edge}\;
2) parameter estimation updating step: run Subroutine~\ref{alg:param-est}\;
\textbf{Return:} $\{{y}^{(k)}_i\}$, $\{\hat{w}^{(k)}_i\}$. 
\caption{Distributed Learning of v-SGNE with Unknown Parameters}
\label{alg:assemble}
\end{algorithm2e}

We start by establishing the following convergence result for a fixed-point iteration with the K-M scheme and a general continuous quasinonexpansive fixed-point iteration operator $\mathscr{R}_*:\mathcal{H} \to \mathcal{H}$ and its approximates $(\mathscr{R}^{(k)})_{k \in \nset{}{}}$:
\begin{align}\label{eq:fxpt-iter-approx}
    x^{(k+1)} \coloneqq x^{(k)} + \gamma^{(k)}(\mathscr{R}^{(k)}x^{(k)} - x^{(k)}), 
\end{align}
where $\mathcal{H}$ is a finite-dimensional Hilbert space, with its inner product and norm denoted by $\langle, \rangle_{\mathcal{H}}$ and $\norm{\cdot}_{\mathcal{H}}$, respectively. 
Before proceeding, we introduce the following notations to facilitate the later discussion and analysis.
The estimated iteration error and its norm in $\mathcal{H}$ at each iteration $k$ are defined as: 
$\epsilon^{(k)} \coloneqq \mathscr{R}^{(k)}(x^{(k)}) - \mathscr{R}_*(x^{(k)})$ and $\varepsilon^{(k)} \coloneqq \norm{\epsilon^{(k)}}_\mathcal{H}$. 
A residual function $\res{x} \coloneqq \norm{x - \mathscr{R}_*(x)}_\mathcal{H}$ is introduced such that $\res{x^*} = 0$ is a necessary condition for $x^* \in \fix{\mathscr{R}_*}$. 
The proof of the following convergence theorem is reported in \cite[Appendix~B]{huang2021distlearn}. 

\begin{theorem}\label{thm:main-convg-thm}
Let $(\Omega, \mathcal{F}, \mathcal{P})$ be a probability space and $\mathcal{F}_0 \subseteq \mathcal{F}_1 \subseteq \mathcal{F}_2 \subseteq \cdots$ be a sequence of sub-$\sigma$-fields of $\mathcal{F}$.  
Suppose for $k=0, 1, \ldots$, $x^{(k)}$ is an $\mathcal{F}_k$-measurable random vector generated by the inexact fixed-point iteration \eqref{eq:fxpt-iter-approx}, where $\mathscr{R}_*:\mathcal{H} \to \mathcal{H}$ is a continuous quasinonexpansive operator and $(\mathscr{R}^{(k)})_{k \in \nset{}{}}$ denotes a sequence of its approximates subject to stochasticity. 
Moreover, suppose the sequence $(\gamma^{(k)})_{k \in \nset{}{}}$ satisfies $0 \leq \gamma^{(k)} \leq 1$ and $\sum_{k \in \nset{}{}}\gamma^{(k)}(1 - \gamma^{(k)}) = +\infty$. 
If the approximates $(\mathscr{R}^{(k)})_{k \in \nset{}{}}$ and the generated $(x^{(k)})_{k \in \nset{}{}}$ satisfy the following two conditions:

(i) $(\norm{x^{(k)}}_\mathcal{H})_{k \in \nset{}{}}$ is bounded a.s.;
(ii) $\sum_{k \in \nset{}{}} \gamma^{(k)}\expt{}{\varepsilon^{(k)} \mid \mathcal{F}_{k}} < +\infty$ a.s., \\
then $(x^{(k)})_{k \in \nset{}{}}$ will almost surely converge to a fixed point of $\mathscr{R}_*$. 
\end{theorem}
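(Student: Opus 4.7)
The plan is to apply the Robbins--Siegmund almost-sure convergence theorem for nonnegative stochastic processes, adapting the standard template for stochastic Krasnosel'skii--Mann iterations to the quasinonexpansive regime. The three milestones are: derive a one-step quasi-descent inequality on $\norm{x^{(k+1)} - x^*}_{\mathcal{H}}^2$ for every $x^* \in \fix{\mathscr{R}_*}$; extract via Robbins--Siegmund both the almost-sure convergence of $\norm{x^{(k)} - x^*}_{\mathcal{H}}$ and the almost-sure summability of $\bigl(\gamma^{(k)}(1 - \gamma^{(k)})\res{x^{(k)}}^2\bigr)_{k \in \nset{}{}}$; and close with a cluster-point argument that identifies the limit.

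For the first milestone I would fix $x^* \in \fix{\mathscr{R}_*}$, write $x^{(k+1)} - x^* = (1 - \gamma^{(k)})(x^{(k)} - x^*) + \gamma^{(k)}(\mathscr{R}^{(k)}x^{(k)} - x^*)$, expand using the convex-combination identity $\norm{(1-\gamma)a + \gamma b}^2 = (1-\gamma)\norm{a}^2 + \gamma\norm{b}^2 - \gamma(1-\gamma)\norm{a-b}^2$, and split $\mathscr{R}^{(k)} x^{(k)} = \mathscr{R}_* x^{(k)} + \epsilon^{(k)}$. The quasinonexpansive bound $\norm{\mathscr{R}_* x^{(k)} - x^*}_{\mathcal{H}} \leq \norm{x^{(k)} - x^*}_{\mathcal{H}}$ controls the positive expansion of $\norm{\mathscr{R}^{(k)} x^{(k)} - x^*}_{\mathcal{H}}^2$, while the reverse triangle inequality bounds $\norm{\mathscr{R}^{(k)} x^{(k)} - x^{(k)}}_{\mathcal{H}}$ from below by $\res{x^{(k)}} - \varepsilon^{(k)}$, yielding an estimate of the form
\begin{equation*}
\norm{x^{(k+1)} - x^*}_{\mathcal{H}}^2 \leq \norm{x^{(k)} - x^*}_{\mathcal{H}}^2 - \gamma^{(k)}(1-\gamma^{(k)}) \res{x^{(k)}}^2 + \eta^{(k)},
\end{equation*}
where $\eta^{(k)}$ aggregates cross terms proportional to $\gamma^{(k)} \norm{x^{(k)} - x^*}_{\mathcal{H}}\, \varepsilon^{(k)}$, $\gamma^{(k)}(1-\gamma^{(k)}) \res{x^{(k)}}\,\varepsilon^{(k)}$, and $\gamma^{(k)} (\varepsilon^{(k)})^2$.

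For the second milestone, condition (i) bounds $\norm{x^{(k)} - x^*}_{\mathcal{H}}$ a.s., and quasinonexpansiveness implies $\res{x^{(k)}} \leq 2\norm{x^{(k)} - x^*}_{\mathcal{H}}$, so both quantities are uniformly controlled almost surely; a parallel argument (exploiting that the operators of interest map bounded sets into bounded sets) bounds $\varepsilon^{(k)}$ a.s., allowing each contribution to $\eta^{(k)}$ to be dominated by a constant multiple of $\gamma^{(k)}\varepsilon^{(k)}$. Taking conditional expectation with respect to $\mathcal{F}_k$ and invoking condition (ii), Robbins--Siegmund applied to $\norm{x^{(k)} - x^*}_{\mathcal{H}}^2$ simultaneously yields the almost-sure convergence of $\norm{x^{(k)} - x^*}_{\mathcal{H}}$ and the almost-sure finiteness of $\sum_k \gamma^{(k)}(1-\gamma^{(k)}) \res{x^{(k)}}^2$.

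For the final step, combining this summability with $\sum_k \gamma^{(k)}(1-\gamma^{(k)}) = +\infty$ forces $\liminf_k \res{x^{(k)}} = 0$ a.s. Almost-sure boundedness of $(x^{(k)})$ then produces a subsequence converging to some $\bar{x}$; continuity of $\mathscr{R}_*$ (hence of $\res{\cdot}$) makes $\bar{x}$ a fixed point, and re-applying the earlier conclusion with $x^* \coloneqq \bar{x}$ upgrades subsequential convergence to convergence of the whole sequence to $\bar{x}$. The principal obstacle lies in the second milestone: quasinonexpansiveness offers no inner-product refinement for $\langle \mathscr{R}_* x^{(k)} - x^*, \epsilon^{(k)}\rangle$ as the firmly nonexpansive case would, so the first-order noise term $\gamma^{(k)} \norm{x^{(k)} - x^*}_{\mathcal{H}} \varepsilon^{(k)}$ is unavoidable, and both it and the quadratic residual $\gamma^{(k)}(\varepsilon^{(k)})^2$ must be absorbed into the first-moment summability bound of condition (ii) --- a reduction that crucially leverages the almost-sure boundedness supplied by condition (i).
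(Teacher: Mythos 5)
Your proposal is correct and follows essentially the same route as the paper's proof: a one-step quasi-descent inequality for $\norm{x^{(k)}-x^*}^2_{\mathcal{H}}$ (the paper obtains it by first bounding the exact KM step and then adding the perturbation $\gamma^{(k)}\epsilon^{(k)}$, which is just a minor algebraic variant of your direct expansion), then Robbins--Siegmund to get summability of $\gamma^{(k)}(1-\gamma^{(k)})(\res{x^{(k)}})^2$, hence $\liminf_k \res{x^{(k)}}=0$, a cluster point identified as a fixed point via continuity, and finally re-applying the recursion with $x^*$ replaced by that cluster point to upgrade to convergence of the whole sequence. Your treatment of the noise terms (absorbing the first-order term via the a.s. boundedness from condition (i) and the quadratic term via a.s. boundedness of $\varepsilon^{(k)}$) matches the paper's handling, if anything stated slightly more explicitly.
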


We now consider the specific iteration for the locally coupled network games discussed in Section~\ref{subsec:sne-mis-param}. 
Let $(y^{(k)})_{k \in \nset{}{}}$ denote the sequence generated by applying $(\mathscr{P}^{(k)})_{k \in \nset{}{}}$, i.e., $y^{(k + 1)} \coloneqq \mathscr{P}^{(k)}\circ \cdots \circ \mathscr{P}^{(0)}(y^{(0)})$. 
We further define $y^{(k+1)}_* \coloneqq \mathscr{P}_*(y^{(k)})$ for each $k \in \nset{}{}$. 
For brevity, we shall write $\{{\delta}^{(k)}_i\}$ in replacement of $\{{\delta}^{(k)}_i\}_{i \in \playerN}$ and similarly for other sets indexed by $\playerN$, unless otherwise specified. 
For each $k \geq 2$, define the sub-$\sigma$-field $\mathcal{F}_{k}$ as follows:
\begin{align}
\mathcal{F}_{k} \coloneqq \sigma\{y^{(0)}, \hat{w}^{(0)}, \{{\delta}^{(0)}_i\}, \{{\xi}^{(0)}_i\}, \ldots, \{{\delta}^{(k-2)}_i\}, \{{\xi}^{(k-2)}_i\}\};
\end{align}
and define $\mathcal{F}_{0}\coloneqq \sigma\{y^{(0)}\}$ and $\mathcal{F}_{1}\coloneqq \sigma\{y^{(0)}, \hat{w}^{(0)}\}$. 
In the next theorem, we will prove that the sequence produced by Algorithm~\ref{alg:assemble} satisfies the conditions required in Theorem~\ref{thm:main-convg-thm}. 
Define $\norm{\Delta \hat{w}^{(k)}_i}_2 \coloneqq \norm{\hat{w}^{(k)}_i - w^*_i}_2$, $\norm{\Delta \hat{w}^{(k)}}_2 \coloneqq \norm{\hat{w}^{(k)} - w^*}_2$, and the estimated iteration error $\epsilon^{(k)} \coloneqq \mathscr{R}^{(k)}(y^{(k)}) - \mathscr{R}_*(y^{(k)})$ and its norm $\varepsilon^{(k)} \coloneqq \norm{\epsilon^{(k)}}_\pspace$, with $\mathscr{R}^{(k)}$ and $\mathscr{R}_*$ given in Sec.~\ref{subsec:sne-mis-param}. 
The detailed proof is reported in \cite[Appendix~B]{huang2021distlearn}. 

\begin{theorem}\label{thm:bdd-est-seq-sumb}
Consider the sequence $(y^{(k)})_{k \in \nset{}{}}$ generated by Algorithm~\ref{alg:assemble}. 
Suppose Assumptions~\ref{asp:commtopo} to \ref{asp:olse} hold, and given a sequence $(\gamma^{(k)})$ where $0 \leq \gamma^{(k)} \leq 1$ and $\sum_{k \in \nset{}{}}\gamma^{(k)}(1 - \gamma^{(k)}) = +\infty$, the sequence $(\gamma^{(k)}\expt{}{\norm{\Delta\hat{w}^{(k)}}_2 \mid \mathcal{F}_k})_{k \in \nset{}{}}$ is a.s. absolutely summable. 
Then $(\norm{y^{(k)}}_{\pspace})_{k \in \nset{}{}}$ is bounded a.s., and the estimated iteration error satisfies $\sum_{k \in \nset{}{}} \gamma^{(k)}\expt{}{\varepsilon^{(k)} \mid \mathcal{F}_{k}} < \infty$ a.s.
\end{theorem}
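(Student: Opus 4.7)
The plan is to derive a single perturbation estimate of the form $\varepsilon^{(k)} \le C_1(1 + \norm{y^{(k)}}_\pspace)\norm{\Delta\hat{w}^{(k)}}_2$ and then combine it with the Robbins--Siegmund supermartingale lemma to establish both claims in one stroke. From Theorem~\ref{thm:exact-convg-pf}, $\mathscr{R}_*$ is quasinonexpansive, and this property carries over to $\mathscr{P}_* = I + \gamma^{(k)}(\mathscr{R}_* - I)$ because $\gamma^{(k)} \in [0,1]$. The set $\fix{\mathscr{R}_*}$ is non-empty by Assumption~\ref{asp:exist-sol} and Theorem~\ref{thm:eq-prob}, so for any $y^* \in \fix{\mathscr{R}_*}$ the decomposition $y^{(k+1)} = \mathscr{P}_*(y^{(k)}) + \gamma^{(k)}\epsilon^{(k)}$ immediately yields the quasi-Fej\'er inequality $\norm{y^{(k+1)} - y^*}_\pspace \le \norm{y^{(k)} - y^*}_\pspace + \gamma^{(k)}\varepsilon^{(k)}$.

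The perturbation estimate is the technical heart of the argument. Inspecting Subroutine~\ref{alg:node-edge} reveals that the local-estimate components $\tilde{y}^{j(k+1)}_i$ for $j \in \neighbN{+}{i}$ depend affinely on $y^{(k)}$ alone and are independent of $\hat{w}^{(k)}$, so $\epsilon^{(k)}$ lives entirely in the local-decision blocks $\{\tilde{y}^{i(k+1)}_i\}_{i \in \playerN}$. Each such subproblem is $(1/\tau_{i0})$-strongly convex in $\tilde{y}^i_i$ thanks to the proximal quadratic $\tfrac{1}{2\tau_{i0}}\norm{\tilde{y}^i_i - y^{i(k)}_i}_2^2$, and its two variants (under $w^*_i$ versus $\hat{w}^{(k)}_i$) differ only through the $\hat{w}$-dependent gradient $\nabla_{\tilde{y}^i_i}\mathbbm{g}_i$ in the first-order optimality conditions. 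A standard sensitivity bound for strongly convex parametric minimization then yields
\begin{equation*}
\norm{\tilde{y}^{i(k+1)}_i - \tilde{y}^{i(k+1)}_{*,i}}_2 \le \tau_{i0}\bigl(\alpha_{g,i}\norm{\tilde{y}^{+(k+1)}_i}_2 + \beta_{g,i}\bigr)\norm{\Delta\hat{w}^{(k)}_i}_2,
\end{equation*}
where the right-hand side invokes the Lipschitz constant from Assumption~\ref{asp:obj} (iii). Summing over $i$, using the affine dependence of $\tilde{y}^{+(k+1)}_i$ on $y^{(k)}$, and switching between $\norm{\cdot}_2$ and $\norm{\cdot}_\pspace$ via the spectral bounds of $\Phi$ yields the advertised estimate.

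Substituting this bound into the quasi-Fej\'er inequality and using $\norm{y^{(k)}}_\pspace \le \norm{y^*}_\pspace + \norm{y^{(k)} - y^*}_\pspace$ produces
\begin{equation*}
\norm{y^{(k+1)} - y^*}_\pspace \le \bigl(1 + C_1\gamma^{(k)}\norm{\Delta\hat{w}^{(k)}}_2\bigr)\norm{y^{(k)} - y^*}_\pspace + C_1(1+\norm{y^*}_\pspace)\gamma^{(k)}\norm{\Delta\hat{w}^{(k)}}_2.
\end{equation*}
Since $y^{(k)}$ is $\mathcal{F}_k$-measurable by construction, conditioning on $\mathcal{F}_k$ and invoking the hypothesized a.s.\ summability of $(\gamma^{(k)}\expt{}{\norm{\Delta\hat{w}^{(k)}}_2 \mid \mathcal{F}_k})_{k \in \nset{}{}}$ produces a Robbins--Siegmund inequality $\expt{}{V_{k+1} \mid \mathcal{F}_k} \le (1+a_k)V_k + b_k$ with $V_k \coloneqq \norm{y^{(k)} - y^*}_\pspace$ and $\sum_k a_k, \sum_k b_k < \infty$ a.s. Robbins--Siegmund then delivers a.s.\ convergence, and hence boundedness, of $V_k$, which is the first claim. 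Setting $M \coloneqq \sup_k \norm{y^{(k)}}_\pspace < \infty$ a.s.\ and reapplying the perturbation bound gives $\sum_k \gamma^{(k)}\expt{}{\varepsilon^{(k)} \mid \mathcal{F}_k} \le C_1(1+M)\sum_k \gamma^{(k)}\expt{}{\norm{\Delta\hat{w}^{(k)}}_2 \mid \mathcal{F}_k} < \infty$ a.s., which is the second claim.

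The main obstacle I anticipate is the perturbation estimate itself, since $\optT^{(k)}$ is not guaranteed to inherit the monotonicity or Lipschitz regularity of $\optT$ that would normally justify a resolvent-level Lipschitz bound. The remedy is to bypass the operator-level comparison entirely and work with the per-player augmented best-response subproblems, where the proximal quadratic supplies strong convexity unconditionally and Assumption~\ref{asp:obj} (iii) converts parameter drift into solution drift. The affine (rather than superlinear) growth of the Lipschitz constant in $\norm{x^+_i}_2$ is precisely what keeps $a_k$ and $b_k$ summable under the theorem's hypothesis on $(\gamma^{(k)}\expt{}{\norm{\Delta\hat{w}^{(k)}}_2 \mid \mathcal{F}_k})$.
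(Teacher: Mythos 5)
Your proposal is correct and follows essentially the same route as the paper: the per-player sensitivity bound from the $(1/\tau_{i0})$-strongly convex augmented best-response subproblems together with Assumption~\ref{asp:obj} (iii) is exactly the paper's Lemma~\ref{le:resol-upper-bd}, and the subsequent quasi-Fej\'er inequality conditioned on $\mathcal{F}_k$ plus the Robbins--Siegmund theorem is precisely how the paper obtains boundedness of $(\norm{y^{(k)}}_\pspace)$ and then the a.s.\ summability of $\sum_k \gamma^{(k)}\expt{}{\varepsilon^{(k)} \mid \mathcal{F}_k}$.
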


To establish the convergence of Algorithm~\ref{alg:assemble}, Theorems~\ref{thm:main-convg-thm} and \ref{thm:bdd-est-seq-sumb} suggest that the sequence $(\supsub{\gamma}{(k)}{}\expt{}{\norm{\Delta\supsub{\hat{w}}{(k)}{}}_2 \mid \mathcal{F}_k})_{k \in \nset{}{}}$ should fulfill the summability assumption a.s. 
Our aim in what follows will be proving the following asymptotic convergence rate result of OLSE and investigating the relation between the estimation error $\norm{\Delta\supsub{\hat{w}}{(k)}{}}_2$ and the total number of observations made until the $k$-th iteration by utilizing the law of large deviation for OLSEs. 
We refer the interested readers to \cite[Appendix~C]{huang2021distlearn} for the detailed proof. 

\begin{theorem}\label{thm:obsv-num-est-err}
Suppose Assumptions \ref{asp:olse} and \ref{asp:rdm-explr} hold, 
and each player $i \in \playerN$ at the iteration $k \in \nset{}{}$ uses Subroutine~\ref{alg:param-est} to obtain an estimate $\hat{w}^{(k)}_i$. 
Let $\alpha_2 \in (0, \frac{1}{2})$ be an arbitrary constant. 
Then on a sample set $\hat{\Omega}$ with probability one, for any $\hat{\omega} \in \hat{\Omega}$, there exists a sufficiently large index $K_{\text{lse}}(\hat{\omega}) \in \nset{}{}$ such that for all $k > K_{\text{lse}}(\hat{\omega})$, $\expt{}{\norm{\Delta \hat{w}^{(k)}_{i}}_2 \mid \mathcal{F}_k}(\hat{\omega}) \leq C_{\Delta,i}k^{\alpha_2 - 1/2}$ holds for each $i \in \playerN$, where $C_{\Delta,i}$ is a constant independent of $\hat{\omega}$ and $k$.
\end{theorem}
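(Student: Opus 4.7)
The plan is to reduce the problem to the standard OLSE error decomposition and to control its two factors via strong-law/large-deviation arguments for bounded martingale differences. Since $\mathcal{W}_i$ is convex (Assumption~\ref{asp:olse}(iii)) and contains $w^*_i$, projection onto $\mathcal{W}_i$ is nonexpansive at $w^*_i$, so $\norm{\hat{w}^{(k+1)}_i - w^*_i}_2 \leq \norm{\tilde{w}^{(k+1)}_i - w^*_i}_2$, where $\tilde{w}^{(k+1)}_i$ denotes the unconstrained least-squares minimizer. Substituting $s^{(t)}_i = \langle \ell^{(t)}_i, w^*_i\rangle + \xi^{(t)}_i$ into the normal equations, whenever the sample Gram matrix $M_k \coloneqq \frac{1}{k+1}\sum_{t=0}^{k} \ell^{(t)}_i (\ell^{(t)}_i)^T$ is invertible, one obtains
\[
\tilde{w}^{(k+1)}_i - w^*_i = M_k^{-1} N_k, \qquad N_k \coloneqq \tfrac{1}{k+1}\sum_{t=0}^{k} \ell^{(t)}_i \xi^{(t)}_i ,
\]
so $\norm{\hat{w}^{(k+1)}_i - w^*_i}_2 \leq \norm{M_k^{-1}}\cdot \norm{N_k}_2$. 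It therefore suffices to control these two factors separately.

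For $M_k$, I would decompose $\ell^{(t)}_i = \bar{\ell}^{(t)}_i + \Delta\ell^{(t)}_i$, with $\bar{\ell}^{(t)}_i$ the part determined before $\delta^{(t)}_j$ is drawn and $\Delta\ell^{(t)}_i$ the $\delta^{(t)}_j$-linear component. The cross-terms $\bar{\ell}^{(t)}_i (\Delta\ell^{(t)}_i)^T$ form a bounded martingale-difference array (since $\delta^{(t)}_j$ has zero mean conditionally on the past and all iterates are uniformly bounded by compactness of $\mathcal{X}$), and $\Delta\ell^{(t)}_i (\Delta\ell^{(t)}_i)^T$ minus its conditional expectation does too. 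An SLLN for bounded martingale differences (Azuma--Hoeffding combined with Borel--Cantelli) then makes these empirical averages vanish almost surely, while the Cesaro limit of the conditional second moment of $\Delta\ell^{(t)}_i$ is positive definite on the subspace orthogonal to the intercept by Assumption~\ref{asp:rdm-explr}; together with the deterministic $1$ entry that encodes the intercept $w^*_{ii}$, this yields $\lambda_{\min}(M_k)\geq \sigma_{\min}>0$ on a full-probability set $\hat\Omega_1$ for all sufficiently large $k$. For $N_k$, the sequence $(\ell^{(t)}_i\xi^{(t)}_i)_t$ is a bounded martingale-difference array (Assumption~\ref{asp:olse}(i) and boundedness of $\ell^{(t)}_i$), so the law of the iterated logarithm / Hoeffding-type concentration -- the large-deviations tool for M-estimators announced in the abstract -- gives $\norm{N_k}_2 = O(k^{-1/2}\sqrt{\log k})$ a.s. For any fixed $\alpha_2 \in (0, 1/2)$, on a full-probability set $\hat\Omega_2$ and for $k$ larger than a data-dependent $K_2(\hat\omega)$, $\norm{N_k}_2 \leq k^{\alpha_2-1/2}$.

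Combining the two bounds on $\hat\Omega \coloneqq \hat\Omega_1 \cap \hat\Omega_2$ and taking $K_{\text{lse}}(\hat\omega)$ to be the larger of the two switching indices, we get the pathwise bound $\norm{\hat{w}^{(k)}_i - w^*_i}_2 \leq \sigma_{\min}^{-1}k^{\alpha_2-1/2}$ for every $k > K_{\text{lse}}(\hat\omega)$. Passing to $\expt{}{\cdot\mid \mathcal{F}_k}$ is harmless: writing the Gram matrix and noise term based on $t=0,\ldots,k-2$ (which are $\mathcal{F}_k$-measurable), the contribution of the only non-$\mathcal{F}_k$-measurable observation at $t=k-1$ is a uniformly bounded rank-one perturbation of size $O(1/k)$ that leaves the asymptotic rate unchanged. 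This yields the theorem with $C_{\Delta,i}$ depending only on $\sigma_{\min}$, the noise and exploration ranges, and the diameter of $\mathcal{X}$. The main obstacle is the uniform asymptotic lower bound on $\lambda_{\min}(M_k)$: unlike the textbook OLS setting with i.i.d.\ regressors, the iterates $y^{j(t+1)}_j$ entering $\ell^{(t)}_i$ are generated by the coupled NE-seeking dynamics in Subroutine~\ref{alg:node-edge} and need not satisfy persistent excitation on their own. The argument therefore hinges on showing that the injected exploration perturbations $\delta^{(t)}_j$ -- i.i.d., with positive definite covariance, and independent of the past -- dominate in the Cesaro limit and supply the missing excitation uniformly along every sample path.
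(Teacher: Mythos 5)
Your proposal reaches the stated rate, but by a genuinely different route from the paper. The paper never writes the closed-form error $M_k^{-1}N_k$: it verifies the identifiability condition of Lemma~\ref{le:lower-bd} (the same Gram-matrix lower bound you need, proved by splitting the regressors into the exploration part and the rest, killing the cross terms with the martingale SLLN of Lemma~\ref{le:convg-mtg}, and using the i.i.d.\ exploration covariance), then invokes the law of large deviations for M-estimators (Theorem~\ref{thm:lld}) to get uniform exponential tails for $\sqrt{k}\norm{\hat{w}^{(k)}_i-w^*_i}_2$, applies Borel--Cantelli to the events $\{\sqrt{k}\norm{\Delta\hat{w}^{(k)}_i}_2\geq k^{\alpha_2}\}$ conditionally on the exploration $\sigma$-field, and finally converts the pathwise bound into the bound on $\expt{}{\cdot\mid\mathcal{F}_k}$ by inserting the one-step stability estimate of Lemma~\ref{le:subseq-diff-upbd} for the \emph{constrained} OLSE (proved via a VI reformulation and projection nonexpansiveness). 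Your substitution of that machinery by the explicit linear-regression decomposition, with Azuma--Hoeffding/Borel--Cantelli (or LIL) for the bounded martingale-difference array $\ell^{(t)}_i\xi^{(t)}_i$, is legitimate precisely because the model is linear in $w_i$, and it yields a more elementary, self-contained argument (the extra $\sqrt{\log k}$ is absorbed by the $k^{\alpha_2}$ slack); the paper's route is heavier but applies to general M-estimators. Your ``rank-one $O(1/k)$ perturbation'' remark plays exactly the role of Lemma~\ref{le:subseq-diff-upbd} in handling the fact that $\hat{w}^{(k)}_i$ is not $\mathcal{F}_k$-measurable, which is the right fix.

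Two local steps need repair. First, the opening inequality is not correct as written: the constrained OLSE over $\mathcal{W}_i$ is \emph{not} the Euclidean projection of the unconstrained minimizer $\tilde{w}^{(k+1)}_i$, so nonexpansiveness of $\proj_{\mathcal{W}_i}$ does not give $\norm{\hat{w}^{(k+1)}_i-w^*_i}_2\leq\norm{\tilde{w}^{(k+1)}_i-w^*_i}_2$; it is the projection in the $M_k$-inner product, so you either pay a condition-number factor $\sqrt{\lambda_{\max}(M_k)/\lambda_{\min}(M_k)}$ (harmless, since the regressors are bounded and $\lambda_{\min}(M_k)\geq\sigma_{\min}$ on your good set), or, cleaner, avoid $\tilde{w}^{(k+1)}_i$ altogether: since $w^*_i\in\mathcal{W}_i$ and the empirical quadratic is $\lambda_{\min}(M_k)$-strongly convex, comparing its values/optimality conditions at $\hat{w}^{(k+1)}_i$ and $w^*_i$ gives $\norm{\hat{w}^{(k+1)}_i-w^*_i}_2\leq\norm{N_k}_2/\lambda_{\min}(M_k)$ directly; the same care is needed when you bound the one-observation increment of the constrained estimator. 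Second, the lower bound $\lambda_{\min}(M_k)\geq\sigma_{\min}$ does not follow merely from ``PD exploration covariance plus the deterministic intercept entry'': the bordered matrix with first row $[1,\ (\check{\by}_K)^T]$ can be nearly singular when the empirical mean of the regressors is large, and one needs the quantitative Schur-complement computation of Lemma~\ref{le:lower-bd}, which exploits the compactness bound $\norm{\check{\by}_K}_2\leq C_i$ and produces the explicit constant $\ubar{D}_i=\frac{1}{4}\bar{\sigma}^+_i\min\{C_i^{-2},1\}$. You correctly flag this as the crux, but as written you assert the conclusion rather than prove it; with these two points filled in, your argument goes through.
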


To conclude, we note that as long as the sequence of step sizes is chosen as $\gamma^{(k)} \coloneqq 1/k^{\alpha_1}$ with $1/2 < \alpha_1 \leq 1$, there always exists a feasible $\alpha_2 = \frac{1}{2}(\alpha_1 - \frac{1}{2}) \in (0, \frac{1}{4}]$, such that by Theorem~\ref{thm:obsv-num-est-err} for any $\hat{\omega} \in \hat{\Omega}$ and $k > K_{\text{lse}}(\hat{\omega})$, $\gamma^{(k)}\expt{}{\norm{\Delta \hat{w}^{(k)}_{i}}_2 \mid \mathcal{F}_k}(\hat{\omega}) \leq C_{\Delta,i}k^{-\frac{1}{2}\alpha_1 - \frac{3}{4}}$ for all $i$.
This together with Theorems~\ref{thm:main-convg-thm} and \ref{thm:bdd-est-seq-sumb} implies the almost-sure convergence of Algorithm~\ref{alg:assemble}, i.e., $y^{i(k)}_i \overset{\text{a.s.}}{\to} x^*_i$ and $\hat{w}^{(k)}\overset{\text{a.s.}}{\to} w^*_i$ for all $i$, where $x^* \coloneqq [x^*_i]_{i \in \playerN}$ denotes an SNE of \eqref{eq:pf-optprob}.

\section{Conclusion and Future Directions}

This paper develops a distributed solution to find Nash equilibria in stochastic locally coupled network games with unknown parameters by combining the proximal-point algorithm for Nash equilibrium seeking and the ordinary least square estimator for parameter learning. 
Almost-sure convergence of the solution algorithm is established, which can be further extended to handle generalized Nash equilibrium problems and iterations using inexact solvers. 
There remain several open problems. 
In the learning dynamics, to fulfill the identifiability condition for the estimator, each player is required to add random exploration factors to its decisions, and the actual decisions (perturbed by random exploration factors) it plays throughout the iteration will eventually bounce within some $\epsilon$-neighborhood of a true Nash equilibrium, instead of converging to it. 
Hence, one of our future directions is to design learning dynamics such that the actual sequences of play can converge to the true Nash equilibria. 
Another potential future direction resides in considering an estimator which can better deal with the nonlinear parameter estimation and can work more efficiently in an online-learning fashion with suitable guarantees on convergence rate. 
In addition, even though we can extend the current analysis and similarly prove the convergence to a generalized Nash equilibrium when taking locally coupled constraints and global resource constraints, the actual action sequence may violate these coupled constraints during the iterations, which prevents the application of the proposed solution in some practical situations. 
We intend to address these questions in future work. 

\acks{This work was supported by the National Science Foundation under Grant No. 2014816 and No. 2038410. }

\bibliography{references}

\pagebreak
\section*{Appendix}
\renewcommand{\thesubsection}{\Alph{subsection}}

\subsection{Reformulation of the Problem and Convergence Analysis under Accurate Knowledge of Parameters}\label{appdx:eqv-prob}

\begin{proof}[Proof of Theorem~\ref{thm:eq-prob}]

Suppose there exists $y^* \in \zer{\optT}$. 
We have $\mathcal{R}^T\extgjacob_{w^*}(y^*) + N_{\tilde{\mathcal{X}}}(y^*) + \rho\tilde{L}y^* \ni \bzero$. 
For the rows corresponding to the local estimates $y^j_i$, since $\mathcal{R}^T\extgjacob_{w^*}(y^*)$ and $N_{\tilde{\mathcal{X}}}(y^*)$ do not involve local estimates, we have $y^{j*}_i = y^{j*}_j$. 
Under these consensus results, it can be shown
\begin{align*}
\partial_{x_i}\mathbb{J}_i(y^{i*}_i; \{y^{j*}_j\}_{j \in \neighbN{+}{i}}, w^*_i) + N_{\mathcal{X}_i}(y^{i*}_i) \ni \bzero, \forall i \in \playerN,
\end{align*}
and our claim follows. 
In the other direction, if $\{y^{i\dagger}_i\}_{i \in \playerN}$ is an NE of problem \eqref{eq:pf-optprob}, we can set each local estimate $y^{j\dagger}_i = y^{j\dagger}_j$ for all $i \in \playerN$ and $j \in \neighbN{+}{i}$, and the resulting stack $y^\dagger \in \zer{\optT}$. 
\end{proof}

\begin{proof}[Proof of Theorem~\ref{thm:exact-convg-pf}]

Under Assumption~\ref{asp:convg} (i), by the monotone operator theory, it is straightforward to verify that the resolvent $J_{\Phi^{-1}\optT}$ is firmly nonexpansive, and the proposed fixed-point iteration converges to a fixed point of $\optT$. 
Here, we focus on proving the theorem under Assumption~\ref{asp:convg} (ii). 

We begin by proving the weaker sense of monotonicity that $\optT$ possesses. 
For arbitrary $y \in \mathcal{X}$ and $y^* \in \zer{\optT}$ and their associated graphs w.r.t. $\mathcal{R}^T\extgjacob_{w^*}$, i.e., $(y, g), (y^*, g^*) \in \gra{\mathcal{R}^T\extgjacob_{w^*}}$, we have
\begin{align*}
\begin{split}
\langle y - y^*, g + \rho\tilde{L}y - g^* - \rho\tilde{L}y^*\rangle = \langle y - y^*, g - g^*\rangle + \langle y - y^*, \rho\tilde{L}(y - y^*) \rangle. 
\end{split}
\end{align*}
We construct a row switching matrix $\mathcal{S}_r$, which, for each player $i$, groups $y^i_i$ and $y^i_j$ for all $j \in \neighbN{-}{i}$ together, i.e., $\mathcal{S}_ry = [y^i_i; [y^i_j]_{j \in \neighbN{-}{i}}]_{i \in \playerN}$. 
Moreover, the block-wise consensus matrix $\mathcal{C}_B$ after the switching can be defined as:
\begin{align*}
\mathcal{C}_B \coloneqq \blkd{\{\frac{1}{1+N^-_i}\bone_{(1+N^-_i) \times (1+N^-_i)}\otimes I_{n_i}\}_{i \in \playerN}}. 
\end{align*}
Decompose $y$ into two orthogonal components, i.e., $y = y^\perp + y^\parallel$, where $y^\parallel = \mathcal{S}_r^T\mathcal{C}_B\mathcal{S}_ry$ and $y^\perp = (I - \mathcal{S}_r^T\mathcal{C}_B\mathcal{S}_r)y$. 
Since $y^* \in \zer{\optT}$, $y^* = y^{*\parallel}$. 
Let $\sigma_1$ denote the smallest positive eigenvalue of $\tilde{L}$. 
As a result, $\langle y - y^*, \rho\tilde{L}(y - y^*) \rangle \geq \sigma_1 \rho \norm{y^\perp}^2$, and
\begin{align*}
\begin{split}
\langle y - y^*, g - g^*\rangle &= \langle y^\perp + y^\parallel - y^*, g - g^\parallel + g^\parallel - g^*\rangle \\ 
&= \langle y^\perp, g - g^\parallel \rangle + \langle y^\perp, g^\parallel - g^* \rangle + \langle y^\parallel - y^*, g - g^\parallel\rangle + \langle y^\parallel - y^*, g^\parallel - g^*\rangle \\
&\geq \frac{\eta}{\bar{N}^- + 1}\norm{y^\parallel - y^*}^2 - \frac{\theta_1 + \theta_2}{\sqrt{\ubar{N}^- + 1}}\norm{y^\perp}\norm{y^\parallel - y^*} - \theta_2\norm{y^\perp}^2,
\end{split}
\end{align*}
where, in the first line, we let $(y^\parallel, g^\parallel) \in \gra{\mathcal{R}^T\extgjacob}$; in the last line, we use $\bar{N}^-$ to denote the greatest out-neighbor count in this network, i.e., $\bar{N}^- \coloneqq \max\{N^-_i: i \in \playerN\}$, and $\ubar{N}^-$ to denote the smallest out-neighbor count, i.e., $\ubar{N}^- \coloneqq \min\{N^-_i: i \in \playerN\}$. 
Merging the above two inequalities, we obtain: 
\begin{align}
\langle y - y^*, g - g^* + \rho\tilde{L}(y - y^*)\rangle \geq 
\begin{bmatrix} \norm{y^\parallel - y^*} \\ \norm{y^\perp} \end{bmatrix}^T
\begin{bmatrix}
\frac{\eta}{\bar{N}^- + 1} & -\frac{\theta_1 + \theta_2}{2\sqrt{\ubar{N}^- + 1}} \\
-\frac{\theta_1 + \theta_2}{2\sqrt{\ubar{N}^- + 1}} & \sigma_1\rho - \theta_2
\end{bmatrix}
\begin{bmatrix} \norm{y^\parallel - y^*} \\ \norm{y^\perp} \end{bmatrix} \geq 0.
\end{align}
when $\rho \geq \frac{1}{\sigma_1}\big(\frac{\bar{N}^- + 1}{\ubar{N}^- + 1}\cdot\frac{(\theta_1 + \theta_2)^2}{4\eta} + \theta_2\big)$.
It further implies that when $\rho$ satisfies this inequality, the operator $\Phi^{-1}\optT$ enjoys the similar restricted monotonicity w.r.t. its fixed points, i.e., $\forall y \in \mathcal{X}$ and $\forall y^* \in \zer{\optT} = \zer{\Phi^{-1}\optT}$, along with $(y, \tilde{g}), (y^*, \tilde{g}^*) \in \gra{\Phi^{-1}\optT}$, $\langle y - y^*, \tilde{g} - \tilde{g}^* \rangle_\pspace = \langle  y - y^*, \Phi g - \Phi g^* \rangle \geq 0$, by the monotonicity of the normal cones of local feasible sets. 
Moreover, we also have $y - J_{\Phi^{-1}\optT}(y) \in \Phi^{-1}\optT(J_{\Phi^{-1}\optT}(y))$ and $y^* - J_{\Phi^{-1}\optT}(y^*) \in \Phi^{-1}\optT(J_{\Phi^{-1}\optT}(y^*))$. 
By the restricted monotonicity of $\optT$ shown above, we derive
\begin{align*}
\langle (y - J_{\Phi^{-1}\optT}(y)) - (y^* - J_{\Phi^{-1}\optT}(y^*)), J_{\Phi^{-1}\optT}(y) - J_{\Phi^{-1}\optT}(y^*)\rangle_\pspace \geq 0, 
\end{align*}
which proves that $J_{\Phi^{-1}\optT}$ is quasinonexpansive \citeApndx[Prop.~4.4]{BauschkeHeinzH2017CAaM}.

The iteration \eqref{eq:ppa-km} suggests the following recursive relationship: 
\begin{align*}
& \norm{y^{(k+1)} - y^*}^2_\pspace = \norm{\gamma^{(k)}(J_{\Phi^{-1}\optT}(y^{(k)}) - y^*) + (1-\gamma^{(k)})(y^{(k)} - y^*)}^2_\pspace \\
&= \gamma^{(k)}\norm{J_{\Phi^{-1}\optT}(y^{(k)}) - y^*}^2_\pspace + (1 - \gamma^{(k)})\norm{y^{(k)} - y^*}^2_\pspace - \gamma^{(k)}(1 - \gamma^{(k)})\norm{J_{\Phi^{-1}\optT}(y^{(k)}) - y^{(k)}}^2_\pspace \\
& \leq \norm{y^{(k)} - y^*}^2_\pspace - \gamma^{(k)}(1 - \gamma^{(k)})\norm{J_{\Phi^{-1}\optT}(y^{(k)}) - y^{(k)}}^2_\pspace,
\end{align*}
which implies that $(y^{(k)})_{k \in \nset{}{}}$ is Fejer monotone w.r.t. $\fix{J_{\Phi^{-1}\optT}}$. 
The above relation further suggests that $\sum_{k \in \nset{}{}}\gamma^{(k)}(1 - \gamma^{(k)})\norm{J_{\Phi^{-1}\optT}(y^{(k)}) - y^{(k)}}^2_\pspace \leq \norm{y^{(0)} - y^*}^2_\pspace$. 
Since $\sum_{k \in \nset{}{}}\gamma^{(k)}(1 - \gamma^{(k)}) = +\infty$, the difference satisfy $\liminf_{k \to \infty} \norm{J_{\Phi^{-1}\optT}(y^{(k)}) - y^{(k)}}^2_\pspace = 0$, which implies that there exists a subsequence $(y^{(k_i)})_{i \in \nset{}{}}$ such that $\lim_{i \to \infty} \norm{J_{\Phi^{-1}\optT}(y^{(k_i)}) - y^{(k_i)}}^2_\pspace = 0$. 

Moreover, the above subsequence $(y^{(k_i)})_{i \in \nset{}{}}$ is bounded and thus admits a convergent subsubsequence $(y^{(l_i)})_{i \in \nset{}{}}$ with $(l_i)_{i \in \nset{}{}} \subseteq (k_i)_{i \in \nset{}{}}$ such that $\lim_{i \to \infty}y^{(l_i)} = y^\dagger$. 
By the generalization of Implicit Function Theorem \citeApndx{kumagai1980implicit}, it can be proved that $J_{\Phi^{-1}\optT}$ is a continuous mapping. 
Hence, $\lim_{i \to \infty} \norm{J_{\Phi^{-1}\optT}(y^{(l_i)}) - y^{(l_i)}}^2_\pspace = 0$ implies $J_{\Phi^{-1}\optT}(y^\dagger) = y^\dagger$ and $y^\dagger \in \fix{\optT}$, which corresponds to a Nash equilibrium of the original problem by Theorem~\ref{thm:eq-prob}. 
Combining the facts that $\lim_{i \to \infty}\norm{y^{(l_i)} - y^\dagger}^2_\pspace = 0$ and $(\norm{y^{(k)} - y^\dagger}^2_\pspace)_{k \in \nset{}{}}$ is a monotonically decreasing sequence, we can conclude the whole sequence is convergent, i.e., $\lim_{k \in \nset{}{}} y^{(k)} = y^\dagger$. 
\end{proof}

\subsection{Convergence of the Learning Dynamics}\label{appdx:convg-pf}
The proof of Theorem~\ref{thm:main-convg-thm} is largely inspired by that of \citeApndx[Prop.~5.34]{BauschkeHeinzH2017CAaM} for deterministic cases and nonexpansive operators with suitable modifications to our settings. 
Although a similar proof has been included in \citeApndx[Appendix~A]{huang2021distributed} which investigates a distributed solution of stochastic generalized Nash equilibrium problems based on the Douglas-Rachford splitting, we present the following to discuss the convergence conditions for a general continuous quasinonexpansive fixed-point iteration operator. 
\begin{proof}[Proof of Theorem~\ref{thm:main-convg-thm}]

Let $x^* \in \fix{\mathscr{R}_*}$. 
We start by investigating the squared norm $\norm{x^{(k+1)}_* - x^*}^2_\mathcal{H}$ to facilitate our later analysis of $\norm{x^{(k+1)} - x^*}^2_\mathcal{H}$:
\begin{align*}
& \norm{x^{(k+1)}_* - x^*}^2_\mathcal{H} = \norm{(1 - \gamma^{(k)})(x^{(k)} - x^*) + \gamma^{(k)}(\mathscr{R}_*(x^{(k)}) - x^*)}^2_\mathcal{H} \\
&  = (1 - \gamma^{(k)})\norm{x^{(k)} - x^*}^2_\mathcal{H} + \gamma^{(k)}\norm{\mathscr{R}_*(x^{(k)}) - \mathscr{R}_*(x^*)}^2_\mathcal{H} - \gamma^{(k)}(1 - \gamma^{(k)})(\res{x^{(k)}})^2 \\ 
& \leq \norm{x^{(k)} - x^*}^2_\mathcal{H} - \gamma^{(k)}(1 - \gamma^{(k)})(\res{x^{(k)}})^2.
\end{align*}
where the inequality follows from the fact that $\mathscr{R}_*$ is quasinonexpansive. 
Next, we derive a recursive relationship for $\norm{x^{(k+1)} - x^*}^2_\mathcal{H}$ as follows:
\begin{align*}
& \norm{x^{(k+1)} - x^*}^2_\mathcal{H} = \norm{x^{(k+1)} - x^{(k+1)}_* + x^{(k+1)}_* -  x^*}^2_\mathcal{H}\\
& = \norm{\gamma^{(k)}\epsilon^{(k)} + x^{(k+1)}_* - x^*}^2_\mathcal{H} \\
& = \norm{x^{(k+1)}_* - x^*}^2_\mathcal{H} + 2\langle \gamma^{(k)}\epsilon^{(k)}, x^{(k+1)}_* - x^*\rangle_\mathcal{H} + (\gamma^{(k)}\varepsilon^{(k)})^2 \\
& \leq \norm{x^{(k)} - x^*}^2_\mathcal{H} - \gamma^{(k)}(1 - \gamma^{(k)})(\res{x^{(k)}})^2 + 2\gamma^{(k)}\varepsilon^{(k)}\norm{x^{(k)} - x^*}_\mathcal{H} + (\gamma^{(k)}\varepsilon^{(k)})^2,
\end{align*}
where the last inequality follows from the results above and the Cauchy-Schwarz inequality. 
Taking conditional expectation $\expt{}{\cdot \mid \mathcal{F}_{k}}$ on both sides of the above inequality yields:
\begin{align}\label{eq:main-covg-rs-recur}
\begin{split}
& \expt{}{\norm{x^{(k+1)} - x^*}^2_\mathcal{H} \mid \mathcal{F}_{k}} \leq \norm{x^{(k)} - x^*}^2_\mathcal{H} - \gamma^{(k)}(1 - \gamma^{(k)})(\res{x^{(k)}})^2 \\
& \qquad + \expt{}{2\gamma^{(k)}\varepsilon^{(k)}\norm{x^{(k)} - x^*}_\mathcal{H} + (\gamma^{(k)}\varepsilon^{(k)})^2\mid \mathcal{F}_{k}}. 
\end{split}
\end{align}
From the fact that $\sum_{k \in \nset{}{}} \gamma^{(k)}\expt{}{\varepsilon^{(k)} \mid \mathcal{F}_{k}} < +\infty$ and $(\norm{x^{(k)} - x^*}_\mathcal{H})_{k \in \nset{}{}}$ is bounded on a set $\hat{\Omega}$ which has probability one, for any sample $\hat{\omega} \in \hat{\Omega}$, $\sum_{k \in \nset{}{}} \expt{}{2\gamma^{(k)}\varepsilon^{(k)}\norm{x^{(k)} - x^*}_\mathcal{H} + (\gamma^{(k)}\varepsilon^{(k)})^2\mid \mathcal{F}_{k}}(\hat{\omega}) < \infty$. 
By applying the Robbins-Siegmund theorem (\citeApndx{robbins1971convergence}), we can then conclude 
$\sum_{k \in \nset{}{}}\gamma^{(k)}(1 - \gamma^{(k)})(\res{x^{(k)}})^2 < \infty$. 
Combining this with the condition that $\sum_{k \in \nset{}{}}\gamma^{(k)}(1 - \gamma^{(k)}) = +\infty$ with $\gamma^{(k)} \in [0, 1]$ yields 
$\liminf_{k \to \infty} (\res{x^{(k)}})^2 = 0$, i.e., 
there exists a subsequence $(x^{(k_i)})_{i \in \nset{}{}}$, such that $\lim_{i \to \infty} \res{x^{(k_i)}} = 0$. 

Moreover, by assumption, $(x^{(k)})_{k \in \nset{}{}}$ is bounded sequence on the set $\hat{\Omega}$, and thus its subsequence $(x^{(k_i)})_{i \in \nset{}{}}$ admits a convergent subsubsequence $(x^{(l_i)})_{i \in \nset{}{}}$ such that $\lim_{i \to \infty} x^{(l_i)} = x^\dagger$, where $(l_i)_{i \in \nset{}{}} \subseteq (k_i)_{i \in \nset{}{}}$. 
The relation $(\res{x^{\dagger}})^2 = 0$ implies $x^\dagger \in \fix{\mathscr{R}_*}$, which follows from the continuity of $\mathscr{R}_*$. 
We can substitute $x^*$ in \eqref{eq:main-covg-rs-recur} with $x^\dagger$. 
By the Robbins-Siegmund theorem, $\lim_{k \to \infty} \norm{x^{(k)} - x^\dagger}^2_\pspace$ exists a.s. 
Since $(x^{(l_i)})_{i \in \nset{}{}}$ is a subsequence of $(x^{(k)})_{k \in \nset{}{}}$ converging to the fixed point $x^\dagger$, we can conclude that $\lim_{k \to \infty} \norm{x^{(k)} - x^\dagger}^2_\pspace = 0$, and hence $\lim_{k \to \infty}x^{(k)} = x^\dagger$.
\end{proof}

To distinguish the computation results of $\mathscr{R}^{(k)}$ and $\mathscr{R}^*$, let $\tilde{y}^{(k+1)} \coloneqq \mathscr{R}^{(k)}(y^{(k)})$ and $\tilde{y}^{(k+1)}_{*} \coloneqq \mathscr{R}_*(y^{(k)})$. 
As a reminder, we define $\norm{\Delta \hat{w}^{(k)}_i}_2 \coloneqq \norm{\hat{w}^{(k)}_i - w^*_i}_2$, and $\norm{\Delta \hat{w}^{(k)}}_2 \coloneqq \norm{\hat{w}^{(k)} - w^*}_2$.
\begin{lemma}\label{le:resol-upper-bd}
Suppose Assumptions~\ref{asp:commtopo} to \ref{asp:olse} hold. There exist positive constants $\tilde{\alpha}$ and $\tilde{\beta}$ such that at each $k$-th iteration, the estimation error $\varepsilon^{(k)}$ are upper bounded as follows:
\begin{equation*}
\expt{}{\varepsilon^{(k)} \mid \mathcal{F}_k} \leq (\tilde{\alpha}\norm{y^{(k)}}_\pspace + \tilde{\beta}) \expt{}{\norm{\Delta\hat{w}^{(k)}}_2 \mid \mathcal{F}_k}. 
\end{equation*}
\end{lemma}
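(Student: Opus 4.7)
The plan is to exploit the fact that the parameter mismatch only affects the local-decision sub-problem. Inspecting Subroutine~\ref{alg:node-edge}, each estimate block $\tilde y^{j(k+1)}_i=(1-\tau_{ij}\rho)y^{j(k)}_i+\tau_{ij}\rho y^{j(k)}_j$ is an explicit affine combination of entries of $y^{(k)}$ and is entirely independent of $\hat w^{(k)}$, so $\tilde y^{j(k+1)}_i$ coincides whether $\mathscr R^{(k)}$ or $\mathscr R_*$ is applied. All of the discrepancy between $\tilde y^{(k+1)}\coloneqq\mathscr R^{(k)}(y^{(k)})$ and $\tilde y^{(k+1)}_*\coloneqq\mathscr R_*(y^{(k)})$ is therefore concentrated in the local-decision coordinates $\tilde y^{i(k+1)}_i$ and $\tilde y^{i(k+1)}_{i,*}$, obtained by minimizing $\hat{\mathbb J}^{(k)}_i(\,\cdot\,;\hat w^{(k)}_i)$ and $\hat{\mathbb J}^{(k)}_i(\,\cdot\,;w^*_i)$ respectively over the same compact $\mathcal X_i$. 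Crucially, the proximal summand $\tfrac{1}{2\tau_{i0}}\norm{\cdot-y^{i(k)}_i}^2_2$ makes each sub-problem $\tfrac{1}{\tau_{i0}}$-strongly convex and uniquely solvable, which is the lever for the whole perturbation argument.

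Next I would subtract the KKT stationarity conditions for the two sub-problems, using the additive decomposition $\partial_{\tilde y^i_i}\mathbb J_i=\partial_{\tilde y^i_i}\mathbbm f_i+\nabla_{\tilde y^i_i}\mathbbm g_i$ from Assumption~\ref{asp:obj}(iii) to isolate the parameter-dependent contribution. Choosing selections $h^{(k)}_f\in\partial\mathbbm f_i(\tilde y^{i(k+1)}_i)$ and $h_{*,f}\in\partial\mathbbm f_i(\tilde y^{i(k+1)}_{i,*})$ that arise in the two KKT systems, and inserting $\pm\nabla\mathbbm g_i(\tilde y^{i(k+1)}_i,w^*_i)$ on the $\mathscr R^{(k)}$ side, the subtracted stationarity relation reads
\[
\big[h^{(k)}_f+\nabla\mathbbm g_i(\tilde y^{i(k+1)}_i,w^*_i)\big]-\big[h_{*,f}+\nabla\mathbbm g_i(\tilde y^{i(k+1)}_{i,*},w^*_i)\big]+\Delta^{(k)}_i+\tfrac{1}{\tau_{i0}}(\tilde y^{i(k+1)}_i-\tilde y^{i(k+1)}_{i,*})+(u^{(k)}-u_*)=0,
\]
where $\Delta^{(k)}_i\coloneqq\nabla\mathbbm g_i(\tilde y^{i(k+1)}_i,\hat w^{(k)}_i)-\nabla\mathbbm g_i(\tilde y^{i(k+1)}_i,w^*_i)$ is the purely parameter-driven residual and $u^{(k)},u_*$ are normal-cone selections. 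The first bracketed difference is the gap of two selections of the monotone operator $\partial\mathbb J_i(\cdot;\tilde y^{+(k+1)}_i,w^*_i)$ (monotone by convexity of $\mathbb J_i$ in Assumption~\ref{asp:obj}(i)); pairing with $\tilde y^{i(k+1)}_i-\tilde y^{i(k+1)}_{i,*}$ together with monotonicity of $N_{\mathcal X_i}$ discards that bracket and the normal-cone term, yielding $\tfrac{1}{\tau_{i0}}\norm{\tilde y^{i(k+1)}_i-\tilde y^{i(k+1)}_{i,*}}^2_2\leq\langle-\Delta^{(k)}_i,\tilde y^{i(k+1)}_i-\tilde y^{i(k+1)}_{i,*}\rangle$.

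Cauchy--Schwarz together with the Lipschitz constant $\alpha_{g,i}\norm{\tilde y^{+(k+1)}_i}_2+\beta_{g,i}$ of $\nabla\mathbbm g_i$ in $\hat w_i$ (Assumption~\ref{asp:obj}(iii)) then cancels one factor of $\norm{\tilde y^{i(k+1)}_i-\tilde y^{i(k+1)}_{i,*}}_2$ and delivers the per-player bound $\norm{\tilde y^{i(k+1)}_i-\tilde y^{i(k+1)}_{i,*}}_2\leq\tau_{i0}(\alpha_{g,i}\norm{\tilde y^{+(k+1)}_i}_2+\beta_{g,i})\norm{\Delta\hat w^{(k)}_i}_2$. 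Summing over $i\in\playerN$, using $\norm{\Delta\hat w^{(k)}_i}_2\leq\norm{\Delta\hat w^{(k)}}_2$, and invoking equivalence of $\norm{\cdot}_2$ and $\norm{\cdot}_\pspace$ from positive definiteness of $\Phi$ yields $\varepsilon^{(k)}\leq(\alpha'\norm{\tilde y^{(k+1)}}_\pspace+\beta')\norm{\Delta\hat w^{(k)}}_2$ for suitable constants $\alpha',\beta'\geq0$. Finally, each local-decision block $\tilde y^{i(k+1)}_i\in\mathcal X_i$ is uniformly norm-bounded while every estimate block is affine in $y^{(k)}$, so stacking gives $\norm{\tilde y^{(k+1)}}_\pspace\leq C_1\norm{y^{(k)}}_\pspace+C_2$; substituting and taking conditional expectation given $\mathcal F_k$ (noting that $y^{(k)}$ is $\mathcal F_k$-measurable, so $\norm{y^{(k)}}_\pspace$ factors out) delivers the claimed inequality.

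The principal obstacle lies in the subdifferential bookkeeping in the regrouping step: one must verify that the selections $h^{(k)}_f$ and $h_{*,f}$ from the possibly multi-valued $\partial\mathbbm f_i$ combine with $\nabla\mathbbm g_i(\cdot,w^*_i)$ into legitimate selections of the monotone operator $\partial\mathbb J_i(\cdot;\tilde y^{+(k+1)}_i,w^*_i)$ at the respective points, which is precisely where the additive decomposition $\mathbb J_i=\mathbbm f_i+\mathbbm g_i$ and the differentiability of $\mathbbm g_i$ guaranteed by Assumption~\ref{asp:obj}(iii) are indispensable; without that structure, the parameter-change and point-change contributions could not be cleanly separated.
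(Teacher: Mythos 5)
Your proposal is correct and follows essentially the same route as the paper's proof: subtracting the optimality inclusions of the two proximal best-response subproblems, inserting $\pm\nabla_{x_i}\mathbbm{g}_i(\tilde{y}^{i(k+1)}_i; w^*_i)$ to isolate the parameter-driven residual, using the $\tfrac{1}{\tau_{i0}}$-strong monotonicity from the proximal term together with monotonicity of $\partial\mathbbm{f}_i+\nabla\mathbbm{g}_i(\cdot;w^*_i)$ and of the normal cone, then Cauchy--Schwarz with the Lipschitz-in-$\hat{w}_i$ constant $\alpha_{g,i}\norm{\tilde{y}^{+(k+1)}_i}_2+\beta_{g,i}$, stacking over players, converting to the $\pspace$-norm via the eigenvalues of $\Phi$, and taking conditional expectation using $\mathcal{F}_k$-measurability of $y^{(k)}$. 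The only differences are cosmetic bookkeeping (your explicit remark that the estimate blocks are parameter-independent, and bounding $\norm{\Delta\hat{w}^{(k)}_i}_2\leq\norm{\Delta\hat{w}^{(k)}}_2$ rather than the paper's $\ell_2$-stacking), which do not change the argument.
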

\begin{proof}
For each player $i \in \playerN$, let $\tilde{y}^{i(k+1)}_{i*}$ and $\tilde{y}^{i(k+1)}_{i}$ be the $\argmin$ solutions of the augmented objective $\hat{\mathbb{J}}^{(k)}_i(\tilde{y}^i_i; w^*_i)$ and its estimate $\hat{\mathbb{J}}^{(k)}_i(\tilde{y}^i_i; \hat{w}^{(k)}_i)$ within $\mathcal{X}_i$, respectively. 
Based on Assumption~\ref{asp:obj}, $\tilde{y}^{i(k+1)}_{i*}$ and $\tilde{y}^{i(k+1)}_{i}$ should satisfy the following two inclusions: 
\begin{align}
\begin{split}
& \partial_{x_i}\mathbbm{f}_i(\tilde{y}^{i(k+1)}_i) + \nabla_{x_i}\mathbbm{g}_i(\tilde{y}^{i(k+1)}_i; \hat{w}^{(k)}_i) + \frac{1}{\tau_{i0}}(\tilde{y}^{i(k+1)}_i - y^{i(k)}_i) + N_{\mathcal{X}_i}(\tilde{y}^{i(k+1)}_i) \ni \bzero, \\
& \partial_{x_i}\mathbbm{f}_i(\tilde{y}^{i(k+1)}_{i*}) + \nabla_{x_i}\mathbbm{g}_i(\tilde{y}^{i(k+1)}_{i*}; w^*_i) + \frac{1}{\tau_{i0}}(\tilde{y}^{i(k+1)}_{i*} - {y}^{i(k)}_i) + N_{\mathcal{X}_i}(\tilde{y}^{i(k+1)}_{i*}) \ni \bzero, 
\end{split}
\end{align}
where we omit $\tilde{y}^{+(k+1)}_i$ in the inputs of $\mathbbm{f}_i$ and $\mathbbm{g}_i$. 
Deducting one inclusion from the other and reformulating the obtained inclusion, we then have:
\begin{align}\label{eq:upperbound-inclu}
\begin{split}
& \partial_{x_i}\mathbbm{f}_i(\tilde{y}^{i(k+1)}_i) - \partial_{x_i}\mathbbm{f}_i(\tilde{y}^{i(k+1)}_{i*})
+ \nabla_{x_i}\mathbbm{g}_i(\tilde{y}^{i(k+1)}_{i}; w^*_i) - \nabla_{x_i}\mathbbm{g}_i(\tilde{y}^{i(k+1)}_{i*}; w^*_i)   \\
& \qquad + \frac{1}{\tau_{i0}}(\tilde{y}^{i(k+1)}_i - \tilde{y}^{i(k+1)}_{i*}) + N_{\mathcal{X}_i}(\tilde{y}^{i(k+1)}_i) - N_{\mathcal{X}_i}(\tilde{y}^{i(k+1)}_{i*})  \\
&  \ni \nabla_{x_i}\mathbbm{g}_i(\tilde{y}^{i(k+1)}_{i}; w^*_i) - \nabla_{x_i}\mathbbm{g}_i(\tilde{y}^{i(k+1)}_i; \hat{w}^{(k)}_i).
\end{split}
\end{align}
We then take the inner product of both sides of the above inclusion with $\tilde{y}^{i(k+1)}_i - \tilde{y}^{i(k+1)}_{i*}$. 
By the convexity suggested in Assumption~\ref{asp:obj} and Assumption~\ref{asp:feaset}, the left-hand side enjoys the strong monotonicity as follows: 
\begin{align}\label{eq:upperbound-lhs}
\begin{split}
& \langle \partial_{x_i}\mathbbm{f}_i(\tilde{y}^{i(k+1)}_i) - \partial_{x_i}\mathbbm{f}_i(\tilde{y}^{i(k+1)}_{i*}) + \nabla_{x_i}\mathbbm{g}_i(\tilde{y}^{i(k+1)}_{i}; w^*_i) - \nabla_{x_i}\mathbbm{g}_i(\tilde{y}^{i(k+1)}_{i*}; w^*_i)\\
& \qquad + \frac{1}{\tau_{i0}}(\tilde{y}^{i(k+1)}_i - \tilde{y}^{i(k+1)}_{i*}) + N_{\mathcal{X}_i}(\tilde{y}^{i(k+1)}_i) - N_{\mathcal{X}_i}(\tilde{y}^{i(k+1)}_{i*}) , \tilde{y}^{i(k+1)}_i - \tilde{y}^{i(k+1)}_{i*}\rangle \\
&  \geq \frac{1}{\tau_{i0}}\norm{\tilde{y}^{i(k+1)}_i - \tilde{y}^{i(k+1)}_{i*}}^2_2. 
\end{split}
\end{align}
By the Cauchy-Schwarz inequality and the Lipschitz continuity of $\nabla_{x_i}\mathbbm{g}_i$ given in Assumption~\ref{asp:obj}, taking the inner product of the right-hand side of \eqref{eq:upperbound-inclu} with $\tilde{y}^{i(k+1)}_i - \tilde{y}^{i(k+1)}_{i*}$ yields:
\begin{align}\label{eq:upperbound-rhs}
\begin{split}
& \langle \nabla_{x_i}\mathbbm{g}_i(\tilde{y}^{i(k+1)}_{i}; w^*_i) - \nabla_{x_i}\mathbbm{g}_i(\tilde{y}^{i(k+1)}_i; \hat{w}^{(k)}_i), \tilde{y}^{i(k+1)}_i - \tilde{y}^{i(k+1)}_{i*}\rangle \\
& \leq (\alpha_{g,i} \norm{\tilde{y}^{+(k+1)}_i}_2 + \beta_{g,i}) \norm{\hat{w}^{(k)}_i - w^*_i}_2 \norm{\tilde{y}^{i(k+1)}_i - \tilde{y}^{i(k+1)}_{i*}}_2.
\end{split}
\end{align}
Combining \eqref{eq:upperbound-lhs} and \eqref{eq:upperbound-rhs} yields
\begin{align*}
\begin{split}
\norm{\tilde{y}^{i(k+1)}_i - \tilde{y}^{i(k+1)}_{i*}}_2 &\leq \tau_{i0}(\alpha_{g,i}\norm{\tilde{y}^{+(k+1)}_i}_2 + \beta_{g,i})\norm{\Delta\hat{w}^{(k)}_i}_2 \\
& \leq (\alpha_{i}\norm{y^{(k)}}_2 + \beta_{i})\norm{\Delta\hat{w}^{(k)}_i}_2, 
\end{split}
\end{align*}
where the existence of the positive constants $\alpha_i$ and $\beta_i$ follows from the fact that $\tilde{y}^{+(k+1)}_i$ is some linear transformation of $y^{(k)}$. 
Subsequently, stacking the difference of all players, we obtain:
\begin{align*}
\begin{split}
\norm{\tilde{y}^{(k+1)} - \tilde{y}^{(k+1)}_*}_2 &\leq (\sum_{i \in \playerN}(\alpha_{i}\norm{y^{(k)}}_2 + \beta_{i})^2\norm{\Delta\hat{w}^{(k)}_i}^2_2)^{1/2} \\
& \leq (\bar{\alpha}\norm{y^{(k)}}_2 + \bar{\beta})(\sum_{i \in \playerN} \norm{\Delta\hat{w}^{(k)}_i}^2_2)^{1/2} \\
& = (\bar{\alpha}\norm{y^{(k)}}_2 + \bar{\beta})\norm{\Delta \hat{w}^{(k)}}_2,
\end{split}
\end{align*}
where $\bar{\alpha} \coloneqq \max\{\alpha_i: i \in \playerN\}$, and $\bar{\beta} \coloneqq \max\{\beta_i: i \in \playerN\}$. 
Hence, taking conditional expectation $\expt{}{\cdot \mid \mathcal{F}_k}$ on both sides yields: $\expt{}{\norm{\tilde{y}^{(k+1)} - \tilde{y}^{(k+1)}_*}_2 \mid \mathcal{F}_k} \leq (\bar{\alpha}\norm{y^{(k)}}_2 + \bar{\beta})\expt{}{\norm{\Delta\hat{w}^{(k)}_i}_2 \mid \mathcal{F}_k}$. 
We close this proof by converting the above relation from Euclidean space to the inner-product space $\pspace$. 
The positive definite design matrix $\Phi$ has its maximum (resp. minimum) eigenvalue denoted by $\bar{\sigma}_{\Phi}$ (resp. $\ubar{\sigma}_{\Phi}$), and we have:
\begin{align*}
\begin{split}
\expt{}{\norm{\tilde{y}^{(k+1)} - \tilde{y}^{(k+1)}_*}_\pspace \mid \mathcal{F}_k} &\leq \sqrt{\bar{\sigma}_{\Phi}}(\frac{\bar{\alpha}}{\sqrt{\ubar{\sigma}_{\Phi}}}\norm{y^{(k)}}_\pspace + \bar{\beta})\expt{}{\norm{\Delta\hat{w}^{(k)}_i}_2 \mid \mathcal{F}_k} \\
& \leq (\tilde{\alpha} \norm{y^{(k)}}_\pspace + \tilde{\beta})\expt{}{\norm{\Delta\hat{w}^{(k)}_i}_2 \mid \mathcal{F}_k},
\end{split}
\end{align*}
where $\tilde{\alpha} \coloneqq \bar{\alpha}\sqrt{\bar{\sigma}_{\Phi}/\ubar{\sigma}_{\Phi}}$, and $\tilde{\beta} \coloneqq \bar{\beta}\sqrt{\bar{\sigma_\Phi}}$. 
\end{proof}

\begin{proof}[Proof of Theorem~\ref{thm:bdd-est-seq-sumb}]
Define $y^{(k+1)}_* \coloneqq \mathscr{P}_*(y^{(k)})$ and $y^{(k+1)} \coloneqq \mathscr{P}^{(k)}(y^{(k)})$.
To leverage the convergence result in Theorem~\ref{thm:main-convg-thm}, note that 
\begin{align*}
\begin{split}
\expt{}{\norm{y^{(k+1)} - y^*}_\pspace \mid \mathcal{F}_k} &= \expt{}{\norm{y^{(k+1)} - y^{(k+1)}_* + y^{(k+1)}_* - y^*}_\pspace \mid \mathcal{F}_k} \\
& \leq \gamma^{(k)}\expt{}{\varepsilon^{(k)} \mid \mathcal{F}_k} + \expt{}{\mathscr{P}_*(y^{(k)}) - \mathscr{P}_*(y^*) \mid \mathcal{F}_k}. 
\end{split}
\end{align*}
We can further simplify the above relation by applying Lemma~\ref{le:resol-upper-bd} and the quasinonexpansiveness of $\mathscr{P}_*$ and obtain the following results (a.s.):
\begin{align*}
& \expt{}{\norm{y^{(k+1)} - y^*}_\pspace \mid \mathcal{F}_k} \leq  \gamma^{(k)}\expt{}{\norm{\Delta\hat{w}^{(k)}}_2 \mid \mathcal{F}_k}(\tilde{\alpha} \norm{y^{(k)}}_\pspace + \tilde{\beta}) + \expt{}{\norm{y^{(k)} - y^*}_\pspace \mid \mathcal{F}_k} \\
& = \gamma^{(k)}\expt{}{\norm{\Delta\hat{w}^{(k)}}_2 \mid \mathcal{F}_k}(\tilde{\alpha} \norm{y^{(k)} - y^* + y^*}_\pspace + \tilde{\beta}) + \norm{y^{(k)} - y^*}_\pspace \\ 
& \leq (1 + \tilde{\alpha} \gamma^{(k)}\expt{}{\norm{\Delta\hat{w}^{(k)}}_2 \mid \mathcal{F}_k})\norm{y^{(k)} - y^*}_\pspace + \gamma^{(k)}\expt{}{\norm{\Delta\hat{w}^{(k)}}_2 \mid \mathcal{F}_k}(\tilde{\alpha}\norm{y^*}_\pspace + \tilde{\beta}).
\end{align*}
Since $\norm{y^*}_\pspace < \infty$ and we assume that $(\gamma^{(k)}\expt{}{\norm{\Delta\hat{w}^{(k)}}_2 \mid \mathcal{F}_k})_{k \in \nset{}{}}$ is a summable sequence, the Robbins-Siegmund theorem (\citeApndx{robbins1971convergence}) can be applied to show $\lim_{k \to \infty} \norm{y^{(k)} - y^*}_\pspace$ exists and is finite a.s.
Consequently, there exists a set $\hat{\Omega}$ which has probability one, such that for any $\hat{\omega} \in \hat{\Omega}$, the sequence $(\norm{y^{(k)}(\hat{\omega}) - y^*}_\pspace)_{k \in \nset{}{}}$ is bounded. 
Therefore, we can find some constant $B(\hat{\omega})$ which satisfies, for all $k \in \nset{}{}$, $\norm{y^{(k)}(\hat{\omega})}_\pspace = \norm{y^{(k)}(\hat{\omega}) - y^* + y^*}_\pspace \leq \norm{y^{(k)}(\hat{\omega}) - y^*}_\pspace + \norm{y^*}_\pspace \leq B(\hat{\omega})$. 

Since the deterministic sequence $(\norm{y^{(k)}(\hat{\omega})}_\pspace)_{k \in \nset{}{}}$ is upper bounded by a constant $B(\hat{\omega})$ for any $\hat{\omega} \in \hat{\Omega}$, 
combining Lemma~\ref{le:resol-upper-bd} and the summability of $(\gamma^{(k)}\expt{}{\norm{\Delta\hat{w}^{(k)}}_2 \mid \mathcal{F}_k})_{k \in \nset{}{}}$, 
we finally can conclude 
\begin{align*}
& \sum_{k \in \nset{}{}}\gamma^{(k)}\expt{}{\varepsilon^{(k)} \mid \mathcal{F}_k}(\hat{\omega}) \\
& \leq \sum_{k \in \nset{}{}}\gamma^{(k)}\expt{}{\norm{\Delta\hat{w}^{(k)}}_2 \mid \mathcal{F}_k} (\tilde{\alpha} \norm{y^{(k)}(\hat{\omega})}_\pspace + \tilde{\beta}) \\
& \leq \sum_{k \in \nset{}{}}\gamma^{(k)} \expt{}{\norm{\Delta\hat{w}^{(k)}}_2 \mid \mathcal{F}_k}(\tilde{\alpha} B(\hat{\omega}) + \tilde{\beta}) < \infty \text{ a.s.}
\end{align*}
\end{proof}

\subsection{Convergence Analysis of the OLSE}\label{appdx:olse-pf}

Our goal in this appendix is to prove Theorem~\ref{thm:obsv-num-est-err} concerning the asymptotic convergence rate result of the OLSE learning dynamics described in Subroutine~\ref{alg:param-est}. 
The proof will be based on the following law of large deviations for multi-dimensional M-estimators.  
We first define the noise-free neighboring aggregate function as $\bar{s}_i(\check{y}^{+(k)}_i; \hat{w}_i) \coloneqq \hat{w}_{ii} + \sum_{j \in \neighbN{+}{i}}\hat{w}^{T}_{ji}\check{y}^{j(k)}_j = \langle [1; (\check{y}^{+(k)}_i)^T], \hat{w}_i\rangle$, where $\check{y}^{+(k)}_i \coloneqq [\check{y}^{j(k)}_j]_{j \in \neighbN{+}{i}}$.
In addition, it is worth mentioning here that the parameter learning dynamics only involves local decisions $\{y^i_i\}_{i \in \playerN}$, the feasibility regions of which are bounded by Assumption~\ref{asp:feaset}. 

\begin{theorem}\label{thm:lld}\citeApndx[Thm.~3.2]{sieders1987large}
For each player $i \in \playerN$, suppose for all $k \in \nset{}{}$ and $\abs{b} \leq +\infty$, there exist some $\kappa > 0$, such that $\expt{}{\exp(b \xi^{(k)}_{i})} \leq \exp(\frac{1}{2}\kappa b^2)$. 
Moreover, let there exist positive constants $\ubar{D}_{i}$ and $\bar{D}_{i}$ such that, for all $\hat{w}_i, \hat{w}_i' \in \mathcal{W}_i$ and $K$ sufficiently large, 
\begin{equation}\label{eq:lld-cond}
\ubar{D}_{i}\norm{\sqrt{K} (\hat{w}_i - \hat{w}_i')}^2 \leq \sum_{1 \leq k \leq K} \big( \bar{s}_i(\check{y}^{+(k)}_i; \hat{w}_i) - \bar{s}_i(\check{y}^{+(k)}_i; \hat{w}_i') \big)^2 \leq \bar{D}_{i}\norm{\sqrt{K} (\hat{w}_i - \hat{w}_i')}^2.
\end{equation}
Then the following law of large deviation holds for the least square estimator $\hat{w}^{(K)}_{i}$: there exist positive constants $D'_{i}$ and $D^*_{i}$ such that, for all $K$ and $H$ large enough,
\begin{equation}
\sup_{\hat{w}_i \in \mathcal{W}_i} \mathbb{P}_{\hat{w}_i} \big\{ \norm{\sqrt{K}(\hat{w}^{(K)}_{i} - \hat{w}_i)}_2 \geq H\big\} \leq D'_{i}\exp(-D^*_{i}H^2). 
\end{equation}
\end{theorem}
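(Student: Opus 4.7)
The plan is to exploit the defining optimality of the OLSE, use the quadratic identifiability in \eqref{eq:lld-cond} to convert a parameter-space deviation into a residual-norm deviation, and then dominate that residual by a sub-Gaussian linear functional of the noises whose tail is controlled by a Chernoff bound. Throughout, I write $\Delta^{(K)}_k \coloneqq \bar{s}_i(\check{y}^{+(k)}_i; \hat{w}^{(K)}_i) - \bar{s}_i(\check{y}^{+(k)}_i; w_i)$, where $w_i \in \mathcal{W}_i$ is an arbitrary candidate ``true'' parameter, and make the noise explicit via $s^{(k)}_i = \bar{s}_i(\check{y}^{+(k)}_i; w_i) + \xi^{(k)}_i$.

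First I would expand the quadratic objective $Q_K(\hat{w}) \coloneqq \sum_{k=1}^K (s^{(k)}_i - \bar{s}_i(\check{y}^{+(k)}_i;\hat{w}))^2$ into $Q_K(\hat{w}) - Q_K(w_i) = \sum_k [\bar{s}_i(\check{y}^{+(k)}_i;\hat{w}) - \bar{s}_i(\check{y}^{+(k)}_i;w_i)]^2 - 2\sum_k \xi^{(k)}_i [\bar{s}_i(\check{y}^{+(k)}_i;\hat{w}) - \bar{s}_i(\check{y}^{+(k)}_i;w_i)]$. OLSE optimality $Q_K(\hat{w}^{(K)}_i) \le Q_K(w_i)$ then gives $\sum_k (\Delta^{(K)}_k)^2 \le 2\big|\sum_k \xi^{(k)}_i \Delta^{(K)}_k\big|$. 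Setting $\rho_K \coloneqq \big(\sum_k (\Delta^{(K)}_k)^2\big)^{1/2}$ and $u_k \coloneqq \Delta^{(K)}_k/\rho_K$ (an $\ell^2$-unit sequence), dividing by $\rho_K$ yields $\rho_K \le 2\big|\sum_k \xi^{(k)}_i u_k\big|$. Combining with the lower bound $\rho_K^2 \ge \ubar{D}_i K \norm{\hat{w}^{(K)}_i - w_i}^2$ from \eqref{eq:lld-cond} delivers $\sqrt{K}\,\norm{\hat{w}^{(K)}_i - w_i} \le (2/\sqrt{\ubar{D}_i})\,\big|\sum_k \xi^{(k)}_i u_k\big|$, so the event $\{\norm{\sqrt{K}(\hat{w}^{(K)}_i - w_i)}_2 \ge H\}$ entails existence of some $\hat{w} \in \mathcal{W}_i$ with $\norm{\hat{w} - w_i} \ge H/\sqrt{K}$ at which $|\sum_k \xi^{(k)}_i u_k(\hat{w})| \ge \sqrt{\ubar{D}_i}\,H/2$.

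For any fixed $\ell^2$-unit $(u_k)$, independence and the sub-Gaussian hypothesis give $\expt{}{\exp(b \sum_k \xi^{(k)}_i u_k)} \le \exp(\kappa b^2 /2)$, and Chernoff yields $\mathbb{P}(|\sum_k \xi^{(k)}_i u_k| \ge t) \le 2\exp(-t^2/(2\kappa))$. To upgrade this to a \emph{uniform} statement over the random direction $u(\hat{w})$, I would run a standard covering/chaining argument: by the upper bound in \eqref{eq:lld-cond}, the map $\hat{w} \mapsto (\Delta^{(K)}_k/\rho_K)_k$ is Lipschitz with modulus controlled by $\sqrt{\bar{D}_i/\ubar{D}_i}$, and $\mathcal{W}_i \subseteq \rset{n^+_i+1}{}$ is compact with metric entropy $O(n^+_i \log(1/\epsilon))$. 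Peeling over shells $\{2^j H \le \sqrt{K}\norm{\hat{w}-w_i} < 2^{j+1}H\}$, taking a union bound over an $\epsilon$-net in each shell, and summing a geometric series in $j$ absorbs the polynomial prefactor into the exponential once $H$ is large enough, producing $D'_i \exp(-D^*_i H^2)$ with $D^*_i \propto \ubar{D}_i/\kappa$ and $D'_i$ depending only on $(n^+_i, \bar{D}_i, \ubar{D}_i)$. The supremum over $w_i \in \mathcal{W}_i$ is automatic, as none of these constants depend on which $w_i$ is designated ``true''.

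The hard part will be the uniformization of the fixed-direction Chernoff bound, because the saturating direction $(u_k)$ depends on the random estimator $\hat{w}^{(K)}_i$; this is precisely where both inequalities in \eqref{eq:lld-cond} are indispensable---the lower one isolates the parameter deviation, and the upper one regularizes the direction vector so that the chaining argument does not blow up. In the linear specialization used later in the paper, where $\bar{s}_i$ is affine in $\hat{w}_i$, the argument collapses: $\Delta^{(K)}_k$ becomes a linear functional of $\hat{w}^{(K)}_i - w_i$ and a single $\epsilon$-net on the unit sphere in $\rset{n^+_i+1}{}$ suffices, reducing the entire uniform bound to a sub-Gaussian vector tail on $\sum_k \xi^{(k)}_i \ell^{(k)}_i$ with $\ell^{(k)}_i = [1;(\check{y}^{+(k)}_i)^T]$.
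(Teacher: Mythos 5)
This theorem is not proved in the paper at all: it is imported verbatim from \citeApndx[Thm.~3.2]{sieders1987large}, so there is no in-paper proof to compare against. Your blind attempt is therefore best judged on its own terms, and as a route it differs from the cited source: Sieders and Dzhaparidze obtain the bound by verifying the hypotheses of a general Ibragimov--Khasminskii-type large deviation theorem for M-estimators (H\"older-type moment bounds on the normalized contrast process plus an identifiability separation condition), whereas you run the ``basic inequality'' argument directly. Your deterministic skeleton is correct and clean: expanding $Q_K(\hat w)-Q_K(w_i)$, invoking optimality of the OLSE over $\mathcal{W}_i$, and using the lower half of \eqref{eq:lld-cond} to pass from the residual norm $\rho_K$ to $\sqrt{K}\,\norm{\hat w^{(K)}_i-w_i}$ is exactly the standard reduction, and the fixed-direction Chernoff bound under the stated sub-Gaussian moment condition is also fine.

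The one genuine gap is the step you yourself flag as ``the hard part'': uniformizing the tail bound over the data-dependent direction $u(\hat w)$. As written, the plan is only a plan. Note in particular that a naive $\epsilon$-net union bound with the crude increment control $\babs{\sum_k \xi^{(k)}_i(u_k-u_k')}\leq \norm{\xi}_2\,\epsilon \lesssim \sqrt{K}\epsilon$ forces $\epsilon \propto H/\sqrt{K}$ and hence a covering number polynomial in $K$; that prefactor cannot be absorbed into $\exp(-D^*_iH^2)$ uniformly in $K$ for fixed large $H$, so the conclusion ``for all $K$ and $H$ large enough'' would fail. You need genuine Dudley-type chaining on the process $\hat w \mapsto \sum_k \xi^{(k)}_i\Delta^{(K)}_k(\hat w)$, whose sub-Gaussian increments are supplied precisely by the upper half of \eqref{eq:lld-cond} (this is what makes the entropy integral $K$-free), combined with the shell-peeling you describe. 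Until that is executed, the proof is incomplete in the nonlinear case. Your closing observation is, however, correct and worth emphasizing: in the affine specialization actually used in Subroutine~\ref{alg:param-est}, $\Delta^{(K)}_k=\langle \ell^{(k)}_i,\hat w-w_i\rangle$, the whole uniformization collapses to a tail bound on the $(n^+_i+1)$-dimensional sub-Gaussian vector $K^{-1/2}\sum_k\xi^{(k)}_i\ell^{(k)}_i$, and the argument closes with a single finite net on the unit sphere; for the paper's purposes that special case would suffice.
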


Next, we are going to show that the sequence generated by Algorithm~\ref{alg:assemble} fulfills the requirements in Theorem~\ref{thm:lld}. 
Given the function $\bar{s}_i$ and the compactness of $\mathcal{X}$, it is straightforward to conclude the existence of an upper bound $\bar{D}_{i}$. 
In the rest of this section, we will focus on proving the existence of the lower bound $\ubar{D}_{i}$ under the setup of Subroutine~\ref{alg:param-est}. 

We begin with proving a lemma concerning the properties of the product of two independent random variables based on the law of large numbers for martingale difference \citeApndx[Thm.~2.18]{hall2014martingale}. 

\begin{lemma}\label{le:convg-mtg}
Suppose $(X^{(t)})_{t \in \nset{}{}}$ is a sequence of random variables with bounded range, $(Y^{(t)})_{t \in \nset{}{}}$ is a sequence of random variables that are zero-mean, range-bounded, and independent and identically distributed. 
In addition, $(X^{(t)})_{t \in \nset{}{}}$ and $(Y^{(t)})_{t \in \nset{}{}}$ are independent. 
Define the random variable $Z^{(t)} \coloneqq X^{(t)} Y^{(t)}$.
Then $\lim_{k \to \infty} \frac{1}{k+1}\sum_{t=0}^{k}Z^{(t)} = 0$ a.s.
\end{lemma}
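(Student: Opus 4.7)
The plan is to recognize $(Z^{(t)})_{t \in \nset{}{}}$ as a martingale difference sequence with respect to a carefully chosen filtration and then invoke the cited martingale strong law of large numbers \citeApndx[Thm.~2.18]{hall2014martingale}. The key observation is that even though $X^{(t)}$ and $Y^{(t)}$ are multiplied at the same time index, the independence between the two sequences together with $\expt{}{Y^{(t)}}=0$ lets us condition on $X^{(t)}$ for free.

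First, I would introduce the filtration $\mathcal{G}_k \coloneqq \sigma\{X^{(0)}, X^{(1)}, \ldots;\, Y^{(0)}, \ldots, Y^{(k-1)}\}$, which enlarges the natural filtration of $(Z^{(t)})$ by including the entire $X$-process. By construction $X^{(k)}$ is $\mathcal{G}_k$-measurable while $Y^{(k)}$ is independent of $\mathcal{G}_k$, using that $(Y^{(t)})$ is i.i.d.\ and independent of $(X^{(t)})$. Hence $\expt{}{Z^{(k)}\mid \mathcal{G}_k} = X^{(k)}\,\expt{}{Y^{(k)}\mid \mathcal{G}_k} = X^{(k)}\,\expt{}{Y^{(k)}} = 0$ a.s., so $S_k \coloneqq \sum_{t=0}^{k} Z^{(t)}$ is a martingale with respect to $(\mathcal{G}_{k+1})_{k \in \nset{}{}}$.

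Second, I would exploit the boundedness hypotheses to verify the integrability condition needed by the cited theorem. Because $(X^{(t)})$ and $(Y^{(t)})$ both have bounded range, there exist constants $M_X, M_Y > 0$ such that $\abs{X^{(t)}} \leq M_X$ and $\abs{Y^{(t)}} \leq M_Y$ a.s.\ for every $t$. Therefore $\expt{}{(Z^{(t)})^2} \leq M_X^2 M_Y^2$ uniformly in $t$, which trivially gives $\sum_{t \in \nset{}{}} \expt{}{(Z^{(t)})^2}/(t+1)^2 < \infty$.

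Third, applying the martingale strong law of large numbers \citeApndx[Thm.~2.18]{hall2014martingale} directly yields $S_k/(k+1) \to 0$ a.s., which is precisely the claim. I do not foresee a serious obstacle here: the only subtlety is the choice of filtration that makes the whole $X$-process available while only the strict past of $Y$ is revealed, after which the martingale difference property and the moment condition of the strong law are both immediate from the hypotheses.
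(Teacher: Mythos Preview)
Your proposal is correct and follows essentially the same approach as the paper: recognize $(S_k)$ as a martingale, bound the squared differences uniformly via the range assumptions, and apply the martingale strong law \citeApndx[Thm.~2.18]{hall2014martingale}. The only difference is your choice of filtration---you enlarge it to include the entire $X$-process so that $X^{(k)}$ is already $\mathcal{G}_k$-measurable, whereas the paper uses the natural filtration $\mathcal{F}_k=\sigma\{X^{(0)},Y^{(0)},\ldots,X^{(k)},Y^{(k)}\}$ and factors $\expt{}{X^{(t)}Y^{(t)}\mid\mathcal{F}_{k_1}}$ directly; this is a cosmetic variation of the same argument.
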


\begin{proof}
Construct the $\sigma-$field $\mathcal{F}_k \coloneqq \{X^{(0)}, Y^{(0)}, \ldots, X^{(k)}, Y^{(k)}\}$ and $(\mathcal{F}_k)_{k \in \nset{}{}}$ is a filtration. 
Define $S^{(k)} = \sum_{t=0}^{k}Z^{(t)}$. 
We claim that $(S^{(k)})_{k \in \nset{}{}}$ is a martingale w.r.t. the filtration $(\mathcal{F}_k)_{k \in \nset{}{}}$. 
For each $k \in \nset{}{}$, $S^{(k)}$ is measurable w.r.t. the filtration $\mathcal{F}_k$, and $\expt{}{S^{(k)}} = \expt{}{\sum_{t=0}^{k}X^{(t)} Y^{(t)}} = \sum_{t=0}^{k}\expt{}{X^{(t)}}\expt{}{Y^{(t)}} = 0 < \infty$ by the independence of $X^{(k)}$ and $Y^{(k)}$. 
Moreover, for all $k_1, k_2 \in \nset{}{}$ and $k_1 < k_2$,
\begin{align*}
\expt{}{S^{(k_2)} \mid \mathcal{F}_{k_1}} &= \expt{}{S^{(k_1)} + \sum_{t = k_1 + 1}^{k_2}Z^{(t)} \mid \mathcal{F}_{k_1}}\\
& = \expt{}{S^{(k_1)} \mid \mathcal{F}_{k_1}} + \sum_{t=k_1 + 1}^{k_2} \expt{}{Z^{(t)} \mid \mathcal{F}_{k_1}} \\
& = S^{(k_1)} + \sum_{t=k_1 + 1}^{k_2} \expt{}{Z^{(t)} \mid \mathcal{F}_{k_1}} \\
& = S^{(k_2)} + \sum_{t=k_1 + 1}^{k_2} \expt{}{X^{(t)} \mid \mathcal{F}_{k_1}}\expt{}{Y^{(t)}} = S^{(k_1)}, \\
\end{align*}
which completes the proof that $(S^{(k)})_{k \in \nset{}{}}$ is a martingale w.r.t. the filtration $(\mathcal{F}_k)_{k \in \nset{}{}}$. 

Next, since for each $t \in \nset{}{}$, $Z^{(t)}$ is a random variable with bounded range, there exists a constant upper bound $B^2_Z$ such that $\expt{}{\abs{Z^{(t)}}^2 \mid \mathcal{F}_{t-1}} \leq B^2_Z$. 
Building upon this upper bound, we have 
\begin{align*}
\sum_{t=0}^{\infty} \frac{1}{(t + 1)^2}\expt{}{\abs{Z^{(t)}}^2 \mid \mathcal{F}_{t-1}}
\leq \sum_{t=0}^{\infty} \frac{1}{(t + 1)^2}B^2_Z < \infty.
\end{align*}

Thus, by the law of large numbers for martingale difference \citeApndx[Thm.~2.18]{hall2014martingale}, we obtain $\lim_{k \to \infty}\frac{1}{k+1}S^{(k)} = 0$ a.s.
\end{proof}

With this lemma at hand, we can construct the following uniform lower bound to satisfy the identifiability condition in Theorem~\ref{thm:lld}. 
Before proceeding, we make some notational conventions for the sake of readability: $\delta^{+(k)}_i \coloneqq [\delta^{(k)}_j]_{j \in \neighbN{+}{i}}$, $y^{+(k)}_{i\delta} \coloneqq [y^{j(k)}_{j\delta}]_{j \in \neighbN{+}{i}}$, and $\mathcal{X}^+_i \coloneqq \prod_{j \in \neighbN{+}{i}} \mathcal{X}_j$. 
Under Assumption~\ref{asp:feaset}, let $\bar{X}^+_i$ denote a constant satisfying $\norm{x^+_i}_\infty \leq \bar{X}^+_i$ for any stack decision vector $x^+_i \in \mathcal{X}^+_i$, and $C_i = \bar{X}^+_i\sqrt{n^+_i}$. 
\begin{lemma}\label{le:lower-bd}
Suppose Assumptions~\ref{asp:feaset} and \ref{asp:rdm-explr} hold. 
Moreover, suppose $\expt{}{\delta^+_i(\delta^+_i)^T} - \bar{\sigma}^+_iI_{n^+_i} \succcurlyeq 0$ for some positive $\bar{\sigma}^+_i \leq C_i$. 
Define the constant $\ubar{D}_i \coloneqq \frac{1}{4}\bar{\sigma}^+_i\min\{C_i^{-2}, 1\}$. 
Then on a sample set $\hat{\Omega}$ which has probability one, for any sample $\hat{\omega} \in \hat{\Omega}$, we can find a $K_{\text{id}}(\hat{\omega})$ sufficiently large such that for all $K \geq K_{\text{id}}(\hat{\omega})$ and $\hat{w}_i, \hat{w}_i' \in \mathcal{W}_i$, the following inequality holds a.s.:
\begin{equation}\label{eq:iden-target}
    \frac{1}{K}\sum_{k=1}^{K} \big( \bar{s}_i(\check{y}^{+(k)}_i; \hat{w}_i) - \bar{s}_i(\check{y}^{+(k)}_i; \hat{w}_i') \big)^2 \geq \ubar{D}_{i}\norm{\hat{w}_i - \hat{w}_i'}^2. 
\end{equation}
\end{lemma}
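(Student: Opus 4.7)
The plan is to recast \eqref{eq:iden-target} as a spectral lower bound on an empirical Gram matrix and exploit the independent random exploration to produce a uniform positive-definiteness floor. Setting $\Delta w \coloneqq \hat{w}_i - \hat{w}_i'$ and $\ell^{(k)}_i \coloneqq [1; \check{y}^{+(k)}_i]$, one has $\bar{s}_i(\check{y}^{+(k)}_i; \hat{w}_i) - \bar{s}_i(\check{y}^{+(k)}_i; \hat{w}_i') = \langle \ell^{(k)}_i, \Delta w \rangle$, so the averaged sum in \eqref{eq:iden-target} equals $\Delta w^T M_K \Delta w$ with $M_K \coloneqq \frac{1}{K}\sum_{k=1}^{K} \ell^{(k)}_i (\ell^{(k)}_i)^T$. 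It therefore suffices to prove that $\lambda_{\min}(M_K) \geq \underline{D}_i$ for all $K$ beyond a random index $K_{\mathrm{id}}(\hat{\omega})$ on a probability-one event.

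To expose the exploration contribution, I would split each regressor as $\check{y}^{+(k)}_i = y^{+(k)}_{i\delta} + \delta^{+(k)}_i$, where the pivot block $y^{+(k)}_{i\delta}$ is $\mathcal{F}_{k-1}$-measurable with $\norm{y^{+(k)}_{i\delta}}_2 \leq C_i$ by Assumption~\ref{asp:feaset}, and $\delta^{+(k)}_i$ is i.i.d., bounded, zero-mean, with covariance $\expt{}{\delta^+_i(\delta^+_i)^T} \succeq \bar{\sigma}^+_i I_{n^+_i}$ by Assumption~\ref{asp:rdm-explr}. Expanding the rank-one products $\ell^{(k)}_i (\ell^{(k)}_i)^T$ partitions the average $M_K$ into a pivot-only PSD contribution, cross matrices linear in $\delta^{+(k)}_i$, and the exploration outer products. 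Lemma~\ref{le:convg-mtg} applied entrywise, with $X^{(k)}$ playing the role of a bounded $\mathcal{F}_{k-1}$-measurable factor and $Y^{(k)}$ a coordinate of $\delta^{+(k)}_i$, shows that each cross-term average vanishes a.s.; meanwhile, the classical strong law for i.i.d.\ bounded vectors yields $\frac{1}{K}\sum_{k=1}^{K} \delta^{+(k)}_i (\delta^{+(k)}_i)^T \to \expt{}{\delta^+_i(\delta^+_i)^T}$ a.s.

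With these limits, for any unit vector $v = [v_0; v_+] \in \rset{1+n^+_i}{}$ and sufficiently large $K$ one obtains
\begin{equation*}
v^T M_K v \geq \big(v_0 + \langle \bar{y}^{(K)}_{i\delta}, v_+\rangle\big)^2 + \bar{\sigma}^+_i \norm{v_+}^2_2 - o_K(1),
\end{equation*}
where $\bar{y}^{(K)}_{i\delta} \coloneqq \frac{1}{K}\sum_{k=1}^K y^{+(k)}_{i\delta}$ satisfies $\norm{\bar{y}^{(K)}_{i\delta}}_2 \leq C_i$. A two-regime split then yields the stated constant: when $|v_0| \leq 2 C_i \norm{v_+}_2$ the exploration term dominates and $\norm{v}^2_2 \leq (4 C_i^2 + 1)\norm{v_+}^2_2$; when $|v_0| > 2 C_i \norm{v_+}_2$, the reverse triangle estimate gives $|v_0 + \langle \bar{y}^{(K)}_{i\delta}, v_+\rangle| \geq |v_0|/2$, hence $v^T M_K v \geq v_0^2/4$, together with $\norm{v}^2_2 \leq v_0^2(1 + (4 C_i^2)^{-1})$. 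Combining the two ratios and using the hypothesis $\bar{\sigma}^+_i \leq C_i$ produces $\lambda_{\min}(M_K) \geq \frac{1}{4}\bar{\sigma}^+_i \min\{C_i^{-2}, 1\} = \underline{D}_i$ after elementary algebra.

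The main obstacle I anticipate is that the case split must hold simultaneously for all $v$ on the unit sphere and all $\hat{w}_i, \hat{w}_i' \in \mathcal{W}_i$ on a single event $\hat{\Omega}$. This is addressed by working at the matrix level: the cross-term and exploration-outer-product convergences above are in operator norm and $v$-free, so the $o_K(1)$ remainder is itself a deterministic function of $K$ on the good event, independent of the probing direction. Uniformity in $v$ is therefore automatic, and the required random threshold $K_{\mathrm{id}}(\hat{\omega})$ may be taken as the first time the operator-norm perturbation of $M_K$ against its pivot-plus-covariance surrogate falls below a constant depending only on $\bar{\sigma}^+_i$ and $C_i$.
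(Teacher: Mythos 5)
Your proposal follows essentially the same route as the paper's proof: you rewrite the left-hand side as a Rayleigh quotient of the empirical Gram matrix $\frac{1}{K}\sum_{k}\ell^{(k)}_i(\ell^{(k)}_i)^T$, split each regressor into the feasibility-adjusted pivot plus the exploration vector, eliminate the cross terms with Lemma~\ref{le:convg-mtg}, and invoke the strong law of large numbers for the exploration outer products, with the same observation that all these convergences hold in a $v$- and $\hat{w}$-free (operator-norm) sense on a single probability-one event --- these are exactly the paper's ingredients. The only divergence is the finishing linear algebra: the paper bounds the block matrix with $\check{\by}_K$ and $Y_K$ from below via Schur complements and a Gershgorin argument, whereas you use the Jensen-type bound $\frac{1}{K}\sum_k\big(v_0+\langle y^{+(k)}_{i\delta},v_+\rangle\big)^2 \geq \big(v_0+\langle \bar{y}^{(K)}_{i\delta},v_+\rangle\big)^2$ (the analogue of \eqref{eq:yidelta-ineq}) together with a case split on the unit sphere; both finishes are legitimate. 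One quantitative slip to fix: in your first regime ($\abs{v_0}\leq 2C_i\norm{v_+}_2$), discarding the squared affine term yields only $\bar{\sigma}^+_i\norm{v_+}^2_2 \geq \bar{\sigma}^+_i/(4C_i^2+1)$, which falls strictly below $\ubar{D}_i=\frac{1}{4}\bar{\sigma}^+_i\min\{C_i^{-2},1\}$ whenever $C_i^2\geq 3/4$, so the "elementary algebra" as sketched does not quite reach the stated constant. This is repairable within your argument: retain the rank-one term and bound the smallest eigenvalue of the limiting surrogate matrix (identity top-left entry, off-diagonal $\bar{y}^{(K)}_{i\delta}$, lower-right block $\bar{y}^{(K)}_{i\delta}(\bar{y}^{(K)}_{i\delta})^T+\bar{\sigma}^+_iI$), whose trace and determinant give a floor of at least $\bar{\sigma}^+_i/(1+C_i^2+\bar{\sigma}^+_i)$; combined with the hypothesis $\bar{\sigma}^+_i\leq C_i$ (which is where that hypothesis actually earns its keep) this exceeds $\ubar{D}_i$, so the gap is bookkeeping rather than a flaw in the approach.
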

\begin{proof}
Converting the L.H.S. of \eqref{eq:iden-target} into a matrix form, we have:
\begin{align*}
\begin{split}
& \frac{1}{K} \sum_{k=1}^{K} (\bar{s}_i(\check{y}^{+(k)}_i; \hat{w}_i) - \bar{s}_i(\check{y}^{+(k)}_i; \hat{w}_i'))^2 = \frac{1}{K} \sum_{k=1}^{K} (\langle [1, (\check{y}^{+(k)}_i)^T], \hat{w}_i\rangle - \langle [1, (\check{y}^{+(k)}_i)^T], \hat{w}_i'\rangle)^2 \\
& \qquad = \frac{1}{K} \sum_{k=1}^{K} (\hat{w}_i - \hat{w}_i')^T [1; \check{y}^{+(k)}_i] \cdot [1, (\check{y}^{+(k)}_i)^T](\hat{w}_i - \hat{w}_i') \\
& \qquad = (\hat{w}_i - \hat{w}_i')^T \Big(\frac{1}{K}\sum_{k=1}^{K}[1; \check{y}^{+(k)}_i] \cdot [1, (\check{y}^{+(k)}_i)^T]\Big) (\hat{w}_i - \hat{w}_i') \\
& \qquad = (\hat{w}_i - \hat{w}_i')^T \Big(\frac{1}{K}\sum_{k=1}^{K} 
\begin{bmatrix}
1 & (\check{y}^{+(k)}_i)^T\\
\check{y}^{+(k)}_i & \check{y}^{+(k)}_i(\check{y}^{+(k)}_i)^T
\end{bmatrix}
\Big) (\hat{w}_i - \hat{w}_i')\\
& \qquad = (\hat{w}_i - \hat{w}_i')^T
\begin{bmatrix}
1 & (\check{\by}_K)^T \\
\check{\by}_K & Y_K
\end{bmatrix}
(\hat{w}_i - \hat{w}_i'),
\end{split}
\end{align*}
where, in the last line, we define $\check{\by}_K \coloneqq \frac{1}{K}\sum_{k=1}^{K}\check{y}^{+(k)}_i$, and $Y_K \coloneqq \frac{1}{K}\sum_{k=1}^{K}\check{y}^{+(k)}_i(\check{y}^{+(k)}_i)^T$. 

We next investigate the asymptotic behavior of the matrix $Y_K$. 
Recall that $\check{y}^{+(k+1)}_i \coloneqq {y}^{+(k+1)}_{i\delta} + \delta^{+(k)}_i$ from Subroutine~\ref{alg:param-est}, and $Y_K$ can be expressed as: 
\begin{align*}
Y_K \coloneqq \frac{1}{K}\sum_{k=1}^{K} \Big(y^{+(k)}_{i\delta}(y^{+(k)}_{i\delta})^T +  y^{+(k)}_{i\delta}(\delta^{+(k-1)}_i)^T + \delta^{+(k-1)}_i(y^{+(k)}_{i\delta})^T + \delta^{+(k-1)}_i(\delta^{+(k-1)}_i)^T\Big). 
\end{align*}
Note that the first term $\frac{1}{K}\sum_{k=1}^{K} y^{+(k)}_{i\delta}(y^{+(k)}_{i\delta})^T$ is a positive semi-definite matrix, and the last term $\lim_{K \to \infty} \frac{1}{K}\sum_{k=0}^{K-1}\delta^{+(k)}_i(\delta^{+(k)}_i)^T = \expt{}{\delta^{+(k)}_i(\delta^{+(k)}_i)^T} \succcurlyeq \bar{\sigma}^+_i I_{n^+_i}$ a.s., by the strong law of large numbers. 
For the remaining two terms, we can apply Lemma~\ref{le:convg-mtg} to show that 
\begin{align*}
\frac{1}{K}\sum_{k=1}^{K}y^{+(k)}_{i\delta}(\delta^{+(k-1)}_i)^T = \bzero_{n^+_i \times n^+_i} \; \text{a.s.},
\end{align*}
and similarly for its transpose. 

Define the matrix $\ubar{Y} \coloneqq \frac{1}{K}\sum_{k=1}^{K}y^{+(k)}_{i\delta}(y^{+(k)}_{i\delta})^T + \frac{3}{4} \bar{\sigma}^+_iI_{n^+_i} \succ 0$. 
Altogether, on a sample set $\hat{\Omega}$ which has probability one, for any arbitrary sample $\hat{\omega} \in \hat{\Omega}$, there exists a time index $K(\hat{\omega})$, such that for any $K \geq K(\hat{\omega})$, $Y_{K} \succ \ubar{Y}$. 

Now, to show that $\Big[\begin{smallmatrix} 1 & (\check{\by}_K)^T \\ \check{\by}_K & Y_K \end{smallmatrix}\Big] \succ 0$, it suffices to prove that $\Big[\begin{smallmatrix} 1 & (\check{\by}_K)^T \\ \check{\by}_K & \ubar{Y} \end{smallmatrix}\Big] \succ 0$. 
By the Schur complement, the above relation holds if and only if $\ubar{Y} - \check{\by}_K(\check{\by}_K)^T \succ 0$. 
Let $\by_{\delta K} \coloneqq \frac{1}{K}\sum_{k=1}^{K}y^{+(k)}_{i\delta}$, and ${\boldsymbol \delta}_K \coloneqq \frac{1}{K}\sum_{k=0}^{K-1} \delta^{+(k)}_{i}$. 
Then $\check{\by}_K(\check{\by}_K)^T = \by_{\delta K}(\by_{\delta K})^T + R({\boldsymbol \delta}_K)$, where $R({\boldsymbol \delta}_K) \coloneqq \by_{\delta K}({\boldsymbol \delta}_K)^T + {\boldsymbol \delta}_K(\by_{\delta K})^T + {\boldsymbol \delta}_K({\boldsymbol \delta}_K)^T$. 
Again by the strong law of large numbers, ${\boldsymbol \delta}_K \to \bzero$ a.s. as $K \to \infty$, which further implies $\frac{1}{4}\bar{\sigma}^+_iI_{n^+_i} - R({\boldsymbol \delta}_K) \succcurlyeq 0$ when $K \geq \Tilde{K}(\hat{\omega})$ for some sufficiently large $\Tilde{K}(\hat{\omega}) \geq K(\hat{\omega})$ for $\hat{\omega}$ in a subset of $\hat{\Omega}$ still with probability one. 
Moreover, the remaining terms in $\bar{Y} - \check{\boldsymbol{y}}_K(\check{\boldsymbol{y}}_K)^T$ give:
\begin{align}\label{eq:yidelta-ineq}
\begin{split}
& \frac{1}{K}\sum_{k=1}^{K} y^{+(k)}_{i\delta}(y^{+(k)}_{i\delta})^T - \by_{\delta K}(\by_{\delta K})^T \\
& \qquad = \frac{1}{K^2}\Big( \sum_{k=1}^K\sum_{t=k+1}^{K} (y^{+(k)}_{i\delta} - y^{+(t)}_{i\delta})(y^{+(k)}_{i\delta} - y^{+(t)}_{i\delta})^T \Big) \succcurlyeq 0,
\end{split}
\end{align}
which completes the proof of $\ubar{Y} - \check{\by}_K(\check{\by}_K)^T \succ 0$. 

Likewise, by applying similar arguments as above, we are going to show:
\begin{align*}
\begin{bmatrix} 1 & (\check{\by}_K)^T \\ \check{\by}_K & \ubar{Y} \end{bmatrix}
- \begin{bmatrix} \frac{1}{4}\bar{\sigma}^+_i C_i^{-2} & 0 \\ 0 & \frac{1}{4}\bar{\sigma}^+_i I_{n^+_i} \end{bmatrix} \succ 0,
\end{align*}
with $\bar{\sigma}^+_i C_i^{-2} \leq 1$ by assumption. 
Then, it suffices for us to prove the following:
\begin{align*}
    (\ubar{Y} - \frac{1}{4}\bar{\sigma}^+_i I_{n^+_i}) - \frac{1}{1 - \frac{1}{4}\bar{\sigma}^+_i C_i^{-2}}\check{\by}_K(\check{\by}_K)^T \succ 0. 
\end{align*}
As suggested above, $R({\boldsymbol \delta}_K) \to \bzero$ a.s. as $K \to \infty$. 
Then there exists some sufficiently large $\tsup{K}(\hat{\omega}) \geq \tilde{K}(\hat{\omega})$, such that for any $K \geq \tsup{K}(\hat{\omega})$, this LMI can be further bounded from below by the following:
\begin{align*}
\begin{split}
& (\ubar{Y} - \frac{1}{4}\bar{\sigma}^+_i I_{n^+_i}) - (1 + \frac{\frac{1}{4}\bar{\sigma}^+_i C_i^{-2}}{1 - \frac{1}{4}\bar{\sigma}^+_i C_i^{-2}})\check{\by}_K(\check{\by}_K)^T \\
& \overset{(a)}{\succcurlyeq} (\ubar{Y} - \frac{1}{4}\bar{\sigma}^+_i I_{n^+_i}) - (1 + \frac{1}{3}\bar{\sigma}^+_i C_i^{-2})\check{\by}_K(\check{\by}_K)^T \\
& \overset{(b)}{\succcurlyeq} \frac{1}{2}\bar{\sigma}^+_i I_{n^+_i} - (1 + \frac{1}{3}\bar{\sigma}^+_i C_i^{-2})R({\boldsymbol \delta}_K) - \frac{1}{3}\bar{\sigma}^+_i C_i^{-2}\by_{\delta K}(\by_{\delta K})^T \\
& \overset{(c)}{\succcurlyeq} \frac{1}{6}\bar{\sigma}^+_i I_{n^+_i} - (1 + \frac{1}{3}\bar{\sigma}^+_i C_i^{-2})R({\boldsymbol \delta}_K) \overset{(d)}{\succ} 0. 
\end{split}
\end{align*}
For (a), we use $\bar{\sigma}^+_i C_i^{-2} \leq 1$; 
for (b), we apply the definition of $\bar{Y}$ and \eqref{eq:yidelta-ineq}; 
(c) follows from the fact that $\norm{\by_{\delta K}}_{\infty} \leq \bar{X}^+_i$ since for each $k$ and $j \in \neighbN{+}{i}$, the adjusted decision $y^{j(k)}_{j\delta} \in \mathcal{X}_j$, and by Gershgorin circle theorem, the matrix $I_{n^+_i} - C^{-2}_i\by_{\delta K}(\by_{\delta K})^T \succcurlyeq 0$; 
finally, (d) is the result of the fact that every entry of $R(\boldsymbol \delta_K)$ converges to zero a.s.
Therefore, we can select $K_{\text{id}}(\hat{\omega}) \geq \tsup{K}(\hat{\omega})$, such that for any $K \geq K_{\text{id}}(\hat{\omega})$, 
\begin{align*}
\begin{bmatrix}
1 & (\check{\by}_K)^T \\ \check{\by}_K & Y_K
\end{bmatrix}  - \ubar{D}_{i} I_{n^+_i+1} \succ 0,
\end{align*}
where the lower bound $\ubar{D}_i \coloneqq \frac{1}{4}\bar{\sigma}^+_i\min\{C_i^{-2}, 1\}$, as claimed. 
\end{proof}

In the following lemma, we are going to establish an upper bound for the distance between the sequential solutions to the OLSE:
\begin{align}\label{eq:olse-linear-copy}
\hat{w}^{(k+1)}_i \coloneqq \argmin_{w_i \in \mathcal{W}_i} \frac{1}{k+1}\sum_{t=0}^{k}
(s^{(t)}_{i} - \langle \ell^{(t)}_{i}, w_i\rangle)^2.
\end{align}

\begin{lemma}\label{le:subseq-diff-upbd}
Suppose Assumptions~\ref{asp:olse} and \ref{asp:rdm-explr} hold, and $\hat{w}^{(k)}_i$ and $\hat{w}^{(k+1)}_i$ are the solutions to the OLSE at the $k$-th and $(k+1)$-th iteration, respectively. 
If the matrix $\frac{1}{k}\sum_{t=0}^{k-1}\ell^{(t)}_i(\ell^{(t)}_i)^T$ is positive definite and all its eigenvalues are lower bounded by $\ubar{D}_i$, then $\norm{\hat{w}^{(k+1)}_i - \hat{w}^{(k)}_i}_2 \leq C_{w,i}(\ubar{D}_ik)^{-1}$, where $C_{w,i}$ is some constant determined by the diameters of $\mathcal{X}^+_i$, $\mathcal{W}_i$, and the ranges of $\xi_i$ and $\delta_i$. 
\end{lemma}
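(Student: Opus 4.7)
My plan is to characterize both $\hat{w}^{(k)}_i$ and $\hat{w}^{(k+1)}_i$ through their first-order optimality conditions on the constrained quadratic program \eqref{eq:olse-linear-copy}, then subtract them and exploit the strong convexity furnished by the assumed eigenvalue lower bound. Since multiplying the gradient by the positive scalar $\tfrac{1}{k+1}$ or $\tfrac{1}{k}$ does not affect the optimality inclusion, there exist $n^{(k+1)} \in N_{\mathcal{W}_i}(\hat{w}^{(k+1)}_i)$ and $n^{(k)} \in N_{\mathcal{W}_i}(\hat{w}^{(k)}_i)$ such that
\begin{align*}
\sum_{t=0}^{k} \ell^{(t)}_i\big(\langle \ell^{(t)}_i, \hat{w}^{(k+1)}_i\rangle - s^{(t)}_i\big) + n^{(k+1)} &= \bzero,\\
\sum_{t=0}^{k-1} \ell^{(t)}_i\big(\langle \ell^{(t)}_i, \hat{w}^{(k)}_i\rangle - s^{(t)}_i\big) + n^{(k)} &= \bzero.
\end{align*}
Subtracting the two inclusions and isolating the contribution of the new sample $(\ell^{(k)}_i, s^{(k)}_i)$ would yield the key identity
\[
\Big(\sum_{t=0}^{k-1} \ell^{(t)}_i(\ell^{(t)}_i)^T\Big)(\hat{w}^{(k+1)}_i - \hat{w}^{(k)}_i) + \big(\langle \ell^{(k)}_i, \hat{w}^{(k+1)}_i\rangle - s^{(k)}_i\big)\ell^{(k)}_i + (n^{(k+1)} - n^{(k)}) = \bzero.
\]

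I would then take the inner product of this identity with $\hat{w}^{(k+1)}_i - \hat{w}^{(k)}_i$. The monotonicity of the normal-cone mapping $N_{\mathcal{W}_i}$ gives $\langle n^{(k+1)} - n^{(k)}, \hat{w}^{(k+1)}_i - \hat{w}^{(k)}_i\rangle \geq 0$, so that term drops. The assumed eigenvalue lower bound on $\tfrac{1}{k}\sum_{t=0}^{k-1}\ell^{(t)}_i(\ell^{(t)}_i)^T$ furnishes
\[
(\hat{w}^{(k+1)}_i - \hat{w}^{(k)}_i)^T\Big(\sum_{t=0}^{k-1} \ell^{(t)}_i(\ell^{(t)}_i)^T\Big)(\hat{w}^{(k+1)}_i - \hat{w}^{(k)}_i) \geq k\ubar{D}_i\norm{\hat{w}^{(k+1)}_i - \hat{w}^{(k)}_i}^2_2,
\]
while Cauchy--Schwarz applied to the remaining cross term, followed by cancellation of one factor of $\norm{\hat{w}^{(k+1)}_i - \hat{w}^{(k)}_i}_2$, produces
\[
k\ubar{D}_i\norm{\hat{w}^{(k+1)}_i - \hat{w}^{(k)}_i}_2 \leq \babs{\langle \ell^{(k)}_i, \hat{w}^{(k+1)}_i\rangle - s^{(k)}_i}\cdot\norm{\ell^{(k)}_i}_2.
\]

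The final step is to certify that the right-hand side is bounded above by a deterministic constant $C_{w,i}$ uniformly in $k$. By Subroutine~\ref{alg:param-est} together with the feasibility adjustment in Section~\ref{subsec:sne-mis-param}, the regressor $\ell^{(k)}_i = [1; [\check{y}^{j(k+1)}_j]_{j \in \neighbN{+}{i}}]$ lies in a set whose diameter is controlled by that of $\mathcal{X}^+_i$; $\hat{w}^{(k+1)}_i \in \mathcal{W}_i$ is bounded by Assumption~\ref{asp:olse}(iii); and $s^{(k)}_i = \langle \ell^{(k)}_i, w^*_i\rangle + \xi^{(k)}_i$ is a bounded linear functional of the regressor at the fixed $w^*_i \in \mathcal{W}_i$ corrupted by the range-bounded noise $\xi^{(k)}_i$ from Assumption~\ref{asp:olse}(i). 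Stitching these uniform bounds into the displayed inequality and dividing through by $k\ubar{D}_i$ would yield the claim. I do not anticipate a serious obstacle: the argument is a textbook strong-convexity stability estimate, and the only mild subtlety is the bookkeeping that splits off the single new observation $(\ell^{(k)}_i, s^{(k)}_i)$ from the accumulated Gram matrix so that the coercivity constant $\ubar{D}_i$ applies cleanly to the difference of optimizers.
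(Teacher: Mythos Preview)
Your argument is correct and in fact more direct than the paper's. Both proofs isolate the single new observation $(\ell^{(k)}_i,s^{(k)}_i)$ and bound the perturbation it induces against the strong convexity supplied by the Gram matrix, arriving at the identical constant $C_{w,i}\geq\babs{\langle \ell^{(k)}_i,\hat{w}^{(k+1)}_i\rangle - s^{(k)}_i}\,\norm{\ell^{(k)}_i}_2$. The difference is in how the constrained optimality is handled: you write the KKT inclusions with explicit normal-cone elements, subtract, and take the inner product with $\hat{w}^{(k+1)}_i-\hat{w}^{(k)}_i$, so that monotonicity of $N_{\mathcal{W}_i}$ discards the constraint terms in one line and the eigenvalue lower bound applies immediately. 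The paper instead passes through the projection fixed-point characterization $\hat{w}=\proj_{\mathcal{W}_i}(\hat{w}-\epsilon F(\hat{w}))$, invokes nonexpansiveness of $\proj_{\mathcal{W}_i}$, and then must introduce an auxiliary scaling $\epsilon>0$ small enough that $\norm{I-\epsilon\cdot\tfrac{1}{k}\sum_{t}\ell^{(t)}_i(\ell^{(t)}_i)^T}_2\leq 1-\ubar{D}_i\epsilon$, after which $\epsilon$ cancels. Your route avoids this artifact and is the textbook strong-convexity stability estimate; the paper's route is a contraction-mapping variant of the same idea. Either way the claimed bound follows.
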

\begin{proof}
Note that $\hat{w}^{(k)}$ is defined as:
\begin{align}\label{eq:olse-linear-k}
\hat{w}^{(k)}_i \coloneqq \argmin_{w_i \in \mathcal{W}_i} \frac{1}{k}\sum_{t=0}^{k-1}
(s^{(t)}_{i} - \langle \ell^{(t)}_{i}, w_i\rangle)^2. 
\end{align}
The associated variational inequality $\text{VI}(\mathcal{W}_i, F_i^{(k)})$ of \eqref{eq:olse-linear-k} has the continuous operator $F_i^{(k)}:\mathcal{W}_i \to \rset{n^+_i+1}{}$ given by
\begin{align}\label{eq:pf-seqdiff-fk}
F^{(k)}_i: w \mapsto \epsilon \cdot \Big( \frac{1}{k}\sum_{t=0}^{k-1}\ell^{(t)}_i(\ell^{(t)}_i)^Tw - \frac{1}{k}\sum_{t=0}^{k-1}s^{(t)}_{i}\ell^{(t)}_i \Big),
\end{align}
where $\epsilon > 0$ is an arbitrary positive constant. 
Then, $\hat{w}^{(k)}_i$ belongs to the solution set of $\text{VI}(\mathcal{W}_i, F_i^{(k)})$, i.e., $\hat{w}^{(k)}_i \in \text{SOL}(\mathcal{W}_i, F_i^{(k)})$. 
Likewise, for the solution $\hat{w}^{(k+1)}_i$ of the problem \eqref{eq:olse-linear-copy}, $\hat{w}^{(k+1)}_i$ is a solution to the $\text{VI}(\mathcal{W}_i, F_i^{(k)})$, i.e., $\hat{w}^{(k+1)}_i \in \text{SOL}(\mathcal{W}_i, F_i^{(k+1)})$, where the operator $F_i^{(k+1)}$ is given as follows:
\begin{align}\label{eq:pf-seqdiff-fk+1}
\begin{split}
F^{(k+1)}_i: w \mapsto & \epsilon \cdot \frac{k+1}{k}\Big( \frac{1}{k+1}\sum_{t=0}^{k-1}\ell^{(t)}_i(\ell^{(t)}_i)^Tw - \frac{1}{k+1}\sum_{t=0}^{k-1}s^{(t)}_{i}\ell^{(t)}_i \\
& + \frac{1}{k+1}\ell^{(k)}_i(\ell^{(k)}_i)^Tw - \frac{1}{k+1}s^{(k)}_{i}\ell^{(k)}_i \Big) \\
&= \epsilon \cdot \Big( \frac{1}{k}\sum_{t=0}^{k-1}\ell^{(t)}_i(\ell^{(t)}_i)^Tw - \frac{1}{k}\sum_{t=0}^{k-1}s^{(t)}_{i}\ell^{(t)}_i + \frac{1}{k}\ell^{(k)}_i(\ell^{(k)}_i)^Tw - \frac{1}{k}s^{(k)}_{i}\ell^{(k)}_i \Big),
\end{split}
\end{align}
with the arbitrary constant $\epsilon > 0$. 
By \citeApndx[Prop.~1.5.8]{facchinei2007finite} and the compactness of $\mathcal{W}_i$, we have:
\begin{align*}
\hat{w}^{(k)}_i \in \text{SOL}(\mathcal{W}_i, F_i^{(k)}) &\Leftrightarrow \hat{w}^{(k)}_i - \proj_{\mathcal{W}_i}(\hat{w}^{(k)}_i - F_i^{(k)}(\hat{w}^{(k)}_i)) = 0, \\
\hat{w}^{(k+1)}_i \in \text{SOL}(\mathcal{W}_i, F_i^{(k+1)}) &\Leftrightarrow \hat{w}^{(k+1)}_i - \proj_{\mathcal{W}_i}(\hat{w}^{(k+1)}_i - F_i^{(k+1)}(\hat{w}^{(k+1)}_i)) = 0. 
\end{align*}
With the above equivalent relations in hand, we can establish an upper bound for the difference $\norm{\hat{w}^{(k+1)}_i - \hat{w}^{(k)}_i}_2$ as follows:
\begin{align*}
\begin{split}
\norm{\hat{w}^{(k+1)}_i - \hat{w}^{(k)}_i}_2 &= \norm{\proj_{\mathcal{W}_i}(\hat{w}^{(k)}_i - F_i^{(k)}(\hat{w}^{(k)}_i)) - \proj_{\mathcal{W}_i}(\hat{w}^{(k+1)}_i - F_i^{(k+1)}(\hat{w}^{(k+1)}_i))}_2 \\
& \overset{(a)}{\leq} \norm{(\hat{w}^{(k)}_i - F_i^{(k)}(\hat{w}^{(k)}_i)) - (\hat{w}^{(k+1)}_i - F_i^{(k+1)}(\hat{w}^{(k+1)}_i))}_2 \\
& \overset{(b)}{=} \norm{ \hat{w}^{(k)}_i - \hat{w}^{(k+1)}_i - \epsilon \cdot \Big( \frac{1}{k}\sum_{t=0}^{k-1}\ell^{(t)}_i(\ell^{(t)}_i)^T\Big)(\hat{w}^{(k)}_i - \hat{w}^{(k+1)}_i)  \\
& \qquad + \epsilon\cdot(\frac{1}{k}\ell^{(k)}_i(\ell^{(k)}_i)^T\hat{w}^{(k+1)}_i - \frac{1}{k}s^{(k)}_{i}\ell^{(k)}_i)}_2 \\
& \leq \underbrace{\norm{I - \epsilon \cdot \Big( \frac{1}{k}\sum_{t=0}^{k-1}\ell^{(t)}_i(\ell^{(t)}_i)^T\Big)}_2 \cdot \norm{\hat{w}^{(k+1)}_i - \hat{w}^{(k)}_i}_2}_{(\RN{1})} \\
& \qquad + \underbrace{\frac{\epsilon}{k}\norm{\ell^{(k)}_i(\ell^{(k)}_i)^T\hat{w}^{(k+1)}_i-s^{(k)}_{i}\ell^{(k)}_i}_2}_{(\RN{2})},
\end{split}
\end{align*}
where $(a)$ follows from the nonexpansiveness of the projection onto a convex set; $(b)$ is a result of using \eqref{eq:pf-seqdiff-fk} and \eqref{eq:pf-seqdiff-fk+1}. 
Next, we can establish a bound for $(\RN{2})$ by noticing that $\mathcal{X}^+_i$, $\mathcal{W}_i$, and the ranges of $\xi_i$ and $\delta_i$ are all bounded, and we can find a constant $C_{w,i}$, such that $\norm{\ell^{(k)}_i(\ell^{(k)}_i)^T\hat{w}^{(k+1)}_i-s^{(k)}_{i}\ell^{(k)}_i}_2 \leq C_{w,i}$. 
To derive an appropriate upper bound for $(\RN{1})$, we choose $\epsilon$ sufficiently small, such that for all possible $\ell^{(t)}_i$ in the feasible set, the positive eigenvalue of $\epsilon \cdot \ell^{(t)}_i(\ell^{(t)}_i)^T$ is less than $1$, i.e., $\epsilon \norm{\ell^{(t)}_i}^2_2 < 1$. 
The existence of $\epsilon$ is guaranteed by the boundedness of $\mathcal{X}^+_i$. 
Combining this with the assumption that all eigenvalues of $\frac{1}{k}\sum_{t=0}^{k-1}\ell^{(t)}_i(\ell^{(t)}_i)^T$ are lower bounded by $\ubar{D}_i$ yields:
\begin{align*}
0 \prec I - \epsilon \cdot \Big( \frac{1}{k}\sum_{t=0}^{k-1}\ell^{(t)}_i(\ell^{(t)}_i)^T \Big) \prec (1 - \ubar{D}_i\epsilon)I. 
\end{align*}
The upper bounds for $(\RN{1})$ and $(\RN{2})$ together lead to the following result for the difference
\begin{align*}
\begin{split}
\norm{\hat{w}^{(k+1)}_i - \hat{w}^{(k)}_i}_2 \leq (1 - \ubar{D}_i\epsilon)\norm{\hat{w}^{(k+1)}_i - \hat{w}^{(k)}_i}_2 + \frac{\epsilon C_{w,i}}{k}, 
\end{split}
\end{align*}
and hence
\begin{align*}
\begin{split}
\norm{\hat{w}^{(k+1)}_i - \hat{w}^{(k)}_i}_2 \leq \frac{C_{w,i}}{\ubar{D}_i}\cdot \frac{1}{k}, 
\end{split}
\end{align*}
which completes the proof of our claim. 
\end{proof}

\begin{proof}[Proof of Theorem~\ref{thm:obsv-num-est-err}]
By Lemma~\ref{le:lower-bd} and Assumption~\ref{asp:feaset}, there exists a pair of constants $(\ubar{D}_i, \bar{D}_i)$ such that \eqref{eq:lld-cond} is fulfilled a.s. 
Denote the $\sigma$-field $\mathcal{F}^{\delta}_{\infty}$ generated by the random initialization and random exploration factors as follows:
\begin{align*}
\mathcal{F}^{\delta}_{\infty} \coloneqq \sigma\{y^{(0)}, \hat{w}^{(0)}, \{\delta^{(0)}_i\}, \ldots, \{\delta^{(k)}_i\}, \ldots \},
\end{align*}
which distinguishes from the sub-$\sigma$-field $\mathcal{F}_{k}$ defined in the main text, i.e., 
\begin{align*}
\mathcal{F}_{k} \coloneqq \sigma\{y^{(0)}, \hat{w}^{(0)}, \{{\delta}^{(0)}_i\}, \{{\xi}^{(0)}_i\}, \ldots, \{{\delta}^{(k-2)}_i\}, \{{\xi}^{(k-2)}_i\}\}. 
\end{align*}
Let $E^{(k)}_i$ denote the event $\sqrt{k}\norm{\Delta w^{(k)}_i}_2 \geq k^{\alpha_2}$, where $0 < \alpha_2 < \frac{1}{2}$. 
Let $\hat{\Omega}^{\delta} \in \mathcal{F}^{\delta}_{\infty}$ denote a sample subset with probability one, and for any $\hat{\omega}^{\delta} \in \hat{\Omega}^{\delta}$, there exists a finite index $\tilde{K}(\hat{\omega}^{\delta})$ such that for all $k \geq \tilde{K}(\hat{\omega}^{\delta})$, $\mathbb{P}_{w^*_i}\{E^{(k)}_i \mid \hat{\omega}^{\delta}\} \leq D'_i \exp(-D^*_i k^{2\alpha_2})$, based on the conclusion of Theorem~\ref{thm:lld}. 
We then investigate the summability of this sequence of probability measures: 
\begin{align*}
\sum_{k=\tilde{K}}^{+\infty}\mathbb{P}_{w^*_i}\{E^{(k)}_i \mid \hat{\omega}^{\delta}\} \leq \sum_{k=\tilde{K}}^{+\infty} D'_i \exp(-D^*_i k^{2\alpha_2})
\end{align*}
Note that
\begin{align*}
\lim_{k \to \infty}\frac{\exp(-D_i^*k^{2\alpha_2}))}{1/k^2} 
= \lim_{k \to \infty}\frac{k^2}{\exp(D_{i}^* k^{2\alpha_2})} 
= \lim_{\mu \to \infty}\frac{\mu^{1/\alpha_2}}{a^{\mu}} = 0,
\end{align*}
where we let $\mu \coloneqq k^{2\alpha_2}$ and $a \coloneqq e^{D^*_i} > 1$. 
The above relation further implies 
\begin{align*}
\sum_{k=\tilde{K}}^{\infty} \mathbb{P}_{w^*_i}\{E_i^{(k)} \mid \hat{\omega}^{\delta}\}  < C_1\sum_{k=\tilde{K}}^{\infty} 1/k^2 < +\infty. 
\end{align*}
where $C_1$ denotes some constant. 
The Borel-Cantelli lemma states that: if the sum of the probabilities of the event ${E^{(k)}_{i}}$ conditioning on $\hat{\omega}^{\delta}$ is finite, i.e., $\sum_{t=0}^{\infty}\mathbb{P}_{w^*_i}\{E_i^{(k)} \mid \hat{\omega}^{\delta}\} < \infty$, then the probability that infinitely many of them occur is 0, i.e., 
\begin{align*}
P\{\limsup_{t \to \infty} E_i^{(k)} \mid \hat{\omega}^{\delta}\} = P\{\cap_{t=0}^{\infty}\cup_{k=t}^{\infty}E_i^{(k)} \mid \hat{\omega}^{\delta}\} = 0.
\end{align*}
Hence, almost surely, $\{E_i^{(k)}\}_{k \in \nset{}{}}$ conditioning on $\hat{\omega}^{\delta}$ is true for only a finite number of $k \in \nset{}{}$.
In what follows, we focus on the sample set $\hat{\Omega}$ with probability one, and for each $\hat{\omega} \in \hat{\Omega}$, (i) there exists an index $K_{\text{id}}(\hat{\omega})$ such that for all $k \geq K_{\text{id}}(\hat{\omega})$, the lower bound $\ubar{D}_i$ established in Lemma~\ref{le:lower-bd} holds, i.e., $\frac{1}{k}\sum_{t=0}^{k-1}\ell^{(t)}_i(\ell^{(t)}_i)^T \succ \ubar{D}_i I$, and (ii) $\{E_i^{(k)}\}_{k \in \nset{}{}}$ conditioning on $\hat{\omega}$ is true for only a finite number of $k \in \nset{}{}$, denoted by the set $S_E(\hat{\omega})$ with cardinality $\abs{S_E(\hat{\omega})} < \infty$. 
After picking $K_{\text{lse}}(\hat{\omega}) = \max\{k: k \in S_E(\hat{\omega}) \cup \{K_{id}(\hat{\omega})\}\}$, for all $k \geq K_{\text{lse}}(\hat{\omega})$, we have
\begin{align*}
\begin{split}
& \expt{}{\norm{\Delta \hat{w}^{(k+1)}_i}_2 \mid \mathcal{F}_{k+1}}(\hat{\omega}) \leq \expt{}{\norm{\hat{w}^{(k+1)}_i - \hat{w}^{(k)}_i}_2 + \norm{\hat{w}^{(k)}_i - w^*_i}_2 \mid \mathcal{F}_{k+1} }(\hat{\omega}) \\
& = \expt{}{\norm{\hat{w}^{(k+1)}_i - \hat{w}^{(k)}_i}_2 \mid \mathcal{F}_{k+1} }(\hat{\omega}) + \expt{}{ \norm{\hat{w}^{(k)}_i - w^*_i}_2 \mid \mathcal{F}_{k+1} }(\hat{\omega}) \\
& \overset{(a)}{\leq} \frac{C_{w,i}}{\ubar{D}_i}\cdot\frac{1}{k} + \norm{\hat{w}^{(k)}_i - w^*_i}_2(\hat{\omega}) \\
& \overset{(b)}{\leq} \frac{C_{w,i}}{\ubar{D}_i}\cdot\frac{1}{k} + k^{\alpha_2 - 0.5}, 
\end{split}
\end{align*}
where $(a)$ follows from Lemma~\ref{le:subseq-diff-upbd} and the fact that $\norm{\hat{w}^{(k)}_i - w^*_i}_2$ is $\mathcal{F}_{k+1}$-measurable; 
$(b)$ follows from the result that $E^{(k)}_i$ conditioning on $\hat{\omega}$ is false for all $k \geq K_{\text{lse}}(\hat{\omega})$. 
We can further relax the above upper bound to be $\expt{}{\norm{\Delta \hat{w}^{(k+1)}_i}_2 \mid \mathcal{F}_{k+1}}(\hat{\omega}) \leq C_{\Delta, i}(k+1)^{\alpha_2 - 0.5}$, where $C_{\Delta, i}$ can be chosen as $C_{\Delta, i} \coloneqq \frac{C_{w,i}}{\ubar{D}_i} + 2$ independent of $\hat{\omega}$ and $k$. 

\end{proof}

\subsection{Extension to the Learning of Generalized Nash Equilibria}

In this appendix, we further consider an extension to the problem in the main text, where the local decision $x_i$ of player $i$ should be subject to three types of constraints: the local constraints $\mathcal{X}_i \subseteq \rset{n_i}{}$, the locally coupled constraints involving the decisions of its in-neighbors $\mathcal{X}^+_i \coloneqq \{[x_i; x^+_i] \mid h_i(x_i, x^+_i) \leq \bzero_{m_i} \} \subseteq \rset{n^+_i}{}$, and the global resource constraints $\sum_{j \in \playerN}A_jx_j \leq c$, where $A_j \in \rset{m \times n_j}{}$ with $m$ denoting the number of global affine coupling constraints, and $c \in \rset{m}{}$ is a constant vector which usually represents the quantities of available resources. 
The feasible set for the stack decision vectors is defined as $\hat{\mathcal{X}} \coloneqq \{[x_i]_{i \in \playerN} \in \rset{n}{}\mid \sum_{i \in \playerN} A_ix_i \leq c, x_i \in \mathcal{X}_i, [x_i; x^+_i] \in \mathcal{X}^+_i, \forall i \in \playerN\}$, 
and the feasible set of player $i$ given the decisions of others is given by $\hat{\mathcal{X}}_i(\{x_j\}_{j \in \playerN \backslash \{i\}}) \coloneqq \{x_i \in \rset{n_i}{} \mid  x_i \in \mathcal{X}_i, [x_i; x^{+}_i] \in \mathcal{X}^+_i, A_ix_i \leq c - \sum_{j \in \neighbN{}{}\backslash\{i\}}A_jx_j\}$. 

Altogether, the local stochastic optimization problem of player $i$ can be formally written as:
\begin{equation}\label{eq:pf-optprob-g}
\begin{cases}
\minimize_{x_i \in \mathcal{X}_i} & \mathbb{J}_i(x_i; x^+_i, w^*_i) \\
\subj & h_i(x_i, x^+_i) \leq \bzero_{m_i}, \; A_ix_i \leq c - \sum_{j \in \playerN \backslash \{i\}} A_jx_j. 
\end{cases}
\end{equation}

The solution concept of the problem \eqref{eq:pf-optprob-g} we focus on in this section is stochastic generalized Nash equilibria (SGNEs) (\citeApndx{franci2020distributed, ravat2011characterization}), whose definition is given as follows.
\begin{definition}
The collective decision $x^* \in \hat{\mathcal{X}}$ is called a stochastic generalized Nash equilibrium (SGNE) if no player can benefit by unilaterally deviating from $x^*$, i.e., $\forall i \in \playerN$, $\mathbb{J}_i(x^*_i; x^{*+}_{i}, w^*_i) \leq \mathbb{J}_i(x_i; x^{*+}_{i}, w^*_i)$ for any deviation $x_i \in \hat{\mathcal{X}}_i(\{x^*_j\}_{j \in \playerN \backslash \{i\}})$. 
\end{definition}

We then make the following regularity assumptions concerning the objective functions and feasible sets.
Under these assumptions, SGNEs seeking problems can be recast as the corresponding generalized quasi-variational inequality (GQVI) \citeApndx[Ch.~16.2]{palomar2010convex}. 
\begin{assumption}\label{asp:obj-g} (Local Objectives)
For each $i \in \playerN$, given any fixed sample $\omega_i \in \Omega$ and the precise parameters $w^*_i$, the scenario-based and expected-value objectives $J_i$ and $\mathbb{J}_i$ satisfy:

(i) $J_i(x_i; s_i(x^{+}_{i}, \xi_i; w^*_i))$ is convex in $x_i$ given any fixed $x^{+}_{i}$;

(ii) $J_i(x_i; s_i(x^{+}_{i}, \xi_i; w^*_i))$ is proper and continuous in $x_i$ and $x^{+}_{i}$;

(iii) $\mathbb{J}_i$ can be written as $\mathbb{J}_i(x_i; x^{+}_{i}, \hat{w}_i) = \mathbbm{f}_i(x_i; x^{+}_{i}) + \mathbbm{g}_i(x_i; x^{+}_{i}, \hat{w}_i)$, where for any fixed $x^{+}_{i}$, $\mathbbm{f}_i$ is convex in $x_i$ and $\mathbbm{g}_i$ is differentiable in $x_i$. Moreover, $\mathbbm{g}_i$ is continuous in $\hat{w}_i$ and $\nabla_{x_i}\mathbbm{g}_i$ is Lipschitz in $\hat{w}_i$ with the constant $\alpha_{g,i}\norm{x^{+}_{i}}_2 + \beta_{g,i}$ ($\alpha_{g,i}, \beta_{g,i} \geq 0$) on $\mathcal{W}_i$ for any fixed $x_i \in \mathcal{X}_i$ and $x^{+}_{i}$ . 

\end{assumption}

\begin{assumption}\label{asp:feaset-g} (Feasible Sets)
For each $i \in \playerN$, $\mathcal{X}_i$ is nonempty, compact, and convex. 
The function $h_i$ is an extended-real-valued, closed convex proper function. 
The collective feasible set $\hat{\mathcal{X}}$ is nonempty, and satisfies the the Mangasarian-Fromovitz constraint qualification (MFCQ) \citeApndx[Sect.~16.2.1]{palomar2010convex}.
\end{assumption}

In this paper, we further restrict ourselves to the variational stochastic generalized Nash equilibria (v-SGNEs) \citeApndx{facchinei2010generalized, facchinei2007generalized}, a subclass of SGNEs. 
The motivation of this restriction is that v-SGNEs possess better economic fairness compared with other SGNEs. 
Moreover, there exist a variety of tools that have been developed for solving the corresponding generalized variational inequalities (GVI), the solution of which is the v-SGNEs of the original problem \eqref{eq:pf-optprob-g}. 
For a more detailed discussion about SGNE and v-SGNE, we refer the interested reader to \citeApndx{huang2021sgnep}.
As has been shown in \citeApndx[Prop.~12.4, Sect.~12.2.3]{palomar2010convex}, to compute a v-SGNE of \eqref{eq:pf-optprob-g}, we can instead solve the Karush-Kuhn-Tucker (KKT) problem of its corresponding (GVI), which is given as follows:
\begin{align}\label{eq:kkt-gvi-g}
\begin{split}
& \bzero \in \partial_{x_i}\mathbb{J}_i(x_i; x^{+}_{i}, w^*_i) + \sum_{j \in \neighbN{-}{i} \cup \{i\}} \big(\partial_{x_i}h_j(x_j, x^{+}_j)\big)^T\rho_j + A_i^T\lambda + N_{\mathcal{X}_i}(x_i), \forall i \in \playerN, \\
& \bzero \in -h_i(x_i, x^{+}_i) + N_{\rset{m_i}{+}}(\rho_i), \forall i \in \playerN, \\
& \bzero \in -(Ax - c) + N_{\rset{m}{+}}(\lambda), 
\end{split}
\end{align}
where $\lambda$ is the Lagrange multiplier for the global resource constraints, and $\rho_i$ denotes the Lagrange multiplier for each locally coupled constraints $h_i(x_i; x^+_i) \leq \bzero$. 
It is worth mentioning that the solution of \eqref{eq:kkt-gvi-g} is a subset of that of \eqref{eq:pf-optprob-g}, and under some circumstances, the game in \eqref{eq:pf-optprob-g} may have a SGNE, yet \eqref{eq:kkt-gvi-g} admits an empty solution set. 
Because of this, we make the following assumption concerning the existence of a v-SGNE. 

\begin{assumption}\label{asp:exist-vsgne}
The SGNEP considered admits a nonempty set of v-SGNEs. 
\end{assumption}

We can conduct similar distributed reformulation as in Section~\ref{subsect:dist-sol-ppa} and introduce an incidence matrix $\tilde{B}$ for the dependency graph. 
Let $B$ denote the incidence matrix of the original communication graph $\mathcal{G}$ and $B_m \coloneqq B \otimes I_m$. 
For the problem \eqref{eq:pf-optprob-g}, its associated zero-finding operator $\optT$ is given as follows:

\begin{align}
\begin{split}
\optT: 
\psi \mapsto & \partial ( \sum_{i \in \playerN} \mathbb{J}_i(y^i_i; {y}^+_i, w^*_i) + \iota_{\mathcal{X}_i}(y^i_i) + \iota_{\mathcal{X}^+_i}(y_i) + \iota_{\rset{m}{+}}(\lambda_i)) \\
& + \begin{bmatrix}
\rho\tilde{L} & (\Lambda\mathcal{R})^T & \tilde{B} & \bzero \\
- \Lambda\mathcal{R} & \bzero & \bzero & B_m \\
-\tilde{B}^T & \bzero & \bzero & \bzero \\ 
\bzero & -B_m^T & \bzero & \bzero
\end{bmatrix}
\begin{bmatrix}
y \\ \blambda \\ \mu \\ z
\end{bmatrix}
+ \begin{bmatrix} \bzero \\ \boldsymbol{c} \\ \bzero \\ \bzero \end{bmatrix}
,
\end{split}
\end{align}
where the stack vector $\psi$ is defined as $\psi \coloneqq [y; \blambda; \mu; z]$; $\blambda$ denotes the stack of local estimates of the global resource constraints multiplier $\lambda \in \rset{m}{+}$, i.e., $\blambda \coloneqq [\lambda_1; \cdots; \lambda_N]$; $\mu$ is the stack of the consensus multipliers $\mu_{ji}$, which enforce $y^j_i = y^j_j$ for all $i \in \playerN$ and $j \in \neighbN{+}{i}$; 
likewise, $z \coloneqq [[z_{ji}]_{j \in \neighbN{+}{i}}]_{i \in \playerN}$, where each $z_{ji}$ serves as the multiplier for the consensus constraint $\lambda_j = \lambda_i$. 
The vector $\boldsymbol c \coloneqq [c_1; \cdots; c_N] \in \rset{mN}{+}$ satisfies $\sum_{i \in \playerN} c_i = c$. 
We let each player maintain $\psi_i \coloneqq [y_i; \lambda_i; [\mu_{ji}]_{j \in \neighbN{+}{i}}, [z_{ji}]_{j \in \neighbN{+}{i}}]$. 
For more details about the construction of $\optT$, we refer the interested readers to \citeApndx[Sec.~II]{huang2021distributed} 

\begin{theorem}
Suppose Assumptions~\ref{asp:commtopo}, \ref{asp:obj-g} and \ref{asp:feaset-g} hold, and there exists $\psi^* \coloneqq [y^*; \blambda^*; \mu^*, z^*] \in \zer{\optT}$. 
Then $\tilde{L}y^* = \bzero$, $\blambda^* = \bone_N \otimes \lambda^*$, and there exists a set of Lagrange multipliers $\{\rho^*_i\}_{i \in \playerN}$ such that the tuple $(\{y^{i*}_i\}_{i \in \playerN}, \lambda^*, \{\rho^*_i\}_{i \in \playerN})$ satisfies the KKT conditions \eqref{eq:kkt-gvi-g} for v-SGNE with each $x^*_i$ replaced by $y^{i*}_i$. 
Conversely, if the KKT problem in \eqref{eq:kkt-gvi-g} admits a solution $(\{y^{i\dagger}_i\}_{i \in \playerN}, \lambda^\dagger, \{{\rho}^\dagger_i\}_{i \in \playerN})$, then there exist $\mu^\dagger$ and $z^\dagger$ such that by constructing $y^\dagger$ with the local estimate $y^{j\dagger}_i = y^{j\dagger}_j$ for all $i \in \playerN$ and $j \in \neighbN{+}{i}$, the stack vector $\psi^\dagger \coloneqq [y^\dagger; \bone_N \otimes \lambda^\dagger; \mu^\dagger, z^\dagger] \in \zer{\optT}$. 
\end{theorem}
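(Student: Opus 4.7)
The plan is to extend the block-wise analysis used in the proof of Theorem~\ref{thm:eq-prob}, now accounting for the additional dual variables $\blambda$, $\mu$, and $z$ that enforce the global resource constraints and the consensus conditions on the multipliers. The operator $\optT$ decomposes naturally into four blocks corresponding to $y$, $\blambda$, $\mu$, and $z$, and I would unpack $\bzero \in \optT(\psi^*)$ one block at a time.

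The $\mu$-block immediately yields $\tilde{B}^T y^* = \bzero$; since $\tilde{B}$ is the incidence matrix of the dependency graph $\tilde{\mathcal{G}}$ whose connected components correspond precisely to each player together with its pseudo-players, this forces the consensus $y^{j*}_i = y^{j*}_j$ for every $i \in \playerN$ and $j \in \neighbN{+}{i}$, and hence $\tilde{L}y^* = \bzero$. An analogous argument on the $z$-block gives $B_m^T \blambda^* = \bzero$; by the connectivity assumed in Assumption~\ref{asp:commtopo}, $\ker(B^T) = \text{span}\{\bone_N\}$, so $\blambda^* = \bone_N \otimes \lambda^*$ for some $\lambda^* \in \rset{m}{}$. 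For the $\blambda$-block, the inclusion $\boldsymbol{c} - \Lambda\mathcal{R}y^* + B_m z^* \in N_{\rset{m}{+}}(\blambda^*)$ can be summed over the $N$ per-player rows via left-multiplication by $\bone_N^T \otimes I_m$, which cancels the $B_m z^*$ term because $\bone_N^T B = \bzero$ and recovers the resource KKT condition $c - \sum_i A_i y^{i*}_i \in N_{\rset{m}{+}}(\lambda^*)$. Finally, I would restrict the $y$-block to the row of each local decision $y^i_i$; using the already-established consensus of $y^*$ and $\blambda^*$, this becomes a sum of $\partial_{y^i_i}\mathbb{J}_i$, $N_{\mathcal{X}_i}(y^{i*}_i)$, $A_i^T\lambda^*$, and subdifferential contributions from $\iota_{\mathcal{X}^+_j}$ for each $j \in \{i\}\cup\neighbN{-}{i}$. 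Invoking the MFCQ from Assumption~\ref{asp:feaset-g}, I would decompose each $\partial\iota_{\mathcal{X}^+_j}$ as $(\partial h_j)^T\rho^*_j$ to extract nonnegative multipliers $\rho^*_j$ satisfying the second KKT inclusion and completing the first.

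For the converse, starting from a KKT tuple $(\{y^{i\dagger}_i\},\lambda^\dagger,\{\rho^\dagger_i\})$, I would define $y^{j\dagger}_i \coloneqq y^{j\dagger}_j$ and $\blambda^\dagger \coloneqq \bone_N\otimes \lambda^\dagger$, which trivially forces $\tilde{B}^T y^\dagger = \bzero$, $B_m^T\blambda^\dagger = \bzero$, and $\tilde{L}y^\dagger = \bzero$. Substituting the first and third KKT conditions into the $y$- and $\blambda$-blocks of $\optT$, the residuals collapse into vectors lying in $\operatorname{range}(\tilde{B})$ and $\operatorname{range}(B_m)$, respectively, so the existence of $\mu^\dagger$ and $z^\dagger$ closing these inclusions reduces to a linear-algebraic solvability check, provided $\boldsymbol{c}$ is partitioned with $\sum_i c_i = c$.

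The main obstacle I anticipate is the multiplier extraction in the forward direction: each coupled-constraint indicator $\iota_{\mathcal{X}^+_j}$ contributes to several rows of the $y$-block simultaneously (the row of $y^j_j$ and those of every $y^k_j$ with $k\in \neighbN{+}{j}$), so the MFCQ-based decomposition must produce a single $\rho^*_j$ that is consistent across all these rows at once. This requires careful bookkeeping between the block structure of $\optT$ and the shared subdifferential contributions; the remaining steps, though tedious, reduce to linear-algebraic consequences of connectivity and the incidence-matrix structure of $\mathcal{G}$ and $\tilde{\mathcal{G}}$.
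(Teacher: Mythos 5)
Your overall strategy matches the paper's: unpack $\bzero \in \optT(\psi^*)$ block by block, get consensus of $y^*$ from the $\tilde{B}^T$ (i.e.\ $\mu$) rows and of $\blambda^*$ from the $B_m^T$ (i.e.\ $z$) rows, aggregate the $\blambda$ rows with $\bone_N^T \otimes I_m$ to recover the resource-constraint KKT condition, and construct $\mu^\dagger, z^\dagger$ explicitly in the converse. However, there is a genuine gap in your forward direction, precisely at the step you flag as the "main obstacle." In $\optT$, the indicator $\iota_{\mathcal{X}^+_j}$ acts on player $j$'s own stacked vector $y_j = [y^j_j; [y^k_j]_{k \in \neighbN{+}{j}}]$, so the row of player $i$'s decision $y^i_i$ contains \emph{only} the $x_i$-component of $N_{\mathcal{X}^+_i}(y_i)$ (i.e.\ $(\partial_{x_i}h_i)^T\rho^*_i$), together with $A_i^T\lambda^*$ and the term $-\sum_{j\in\neighbN{-}{i}}\mu^*_{ij}$ coming from the $\tilde{B}$ block; it does \emph{not} directly contain subdifferential contributions of $\iota_{\mathcal{X}^+_j}$ for $j \in \neighbN{-}{i}$, contrary to what you assert. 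Those contributions live in the rows of the out-neighbors' local estimates $y^{i}_j$, which read $\bzero \in (\partial_{x_i}h_j(y^{j*}_j, y^{+*}_j))^T\rho^*_j + \mu^*_{ij}$. The missing idea is the substitution step: these estimate rows pin down $\mu^*_{ij}$, and plugging them into the $y^i_i$ row is exactly what transports the neighbors' constraint gradients into the KKT inclusion \eqref{eq:kkt-gvi-g}, yielding $\sum_{j\in\{i\}\cup\neighbN{-}{i}}(\partial_{x_i}h_j)^T\rho^*_j$. Your proposed fix ("a single $\rho^*_j$ consistent across rows") addresses a non-issue — that consistency is automatic because one element of $N_{\mathcal{X}^+_j}(y_j)$ carries one $\rho^*_j$ across all of player $j$'s rows — while the actual mechanism (the consensus multipliers $\mu^*_{ij}$) is left unidentified, so as written the forward argument does not close.

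A secondary, smaller gap is in the converse: for the $\blambda$ block it is not enough that the residual lie in $\operatorname{range}(B_m)$ (the null space of $\bone_N^T \otimes I_m$); you must also exhibit a $z^\dagger$ for which the resulting vector $\boldsymbol{c} - \Lambda\mathcal{R}y^\dagger + B_m z^\dagger$ is nonnegative and complementary to $\blambda^\dagger$, since it has to be (minus) an element of $N_{\rset{mN}{+}}(\blambda^\dagger)$. The paper does this by setting $r \coloneqq c - \sum_i A_i y^{i\dagger}_i$ and choosing $z^\dagger$ so that each block equals $\frac{1}{N}r$, whence $\bzero \leq \blambda^\dagger \perp \boldsymbol{c} - \Lambda\mathcal{R}y^\dagger + B_m z^\dagger \geq \bzero$ follows from $\lambda^\dagger \perp r$; likewise, the $\mu^\dagger_{ij}$ must be defined directly by $\mu^\dagger_{ij} \in -(\partial_{x_i}h_j(y^{j\dagger}_j, y^{+\dagger}_j))^T\rho^\dagger_j$ so that the estimate rows hold by construction. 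Your "linear-algebraic solvability check" omits these sign and complementarity verifications.
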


\begin{proof}
Suppose there exists $\psi^* \in \zer{\optT}$. 
We then have $\tilde{B}^Ty = \bzero$ and $B_m^T \blambda^* = \bzero$, which implies $y^{j*}_i = y^{j*}_j$ and $\blambda^* = \bone_N \otimes \lambda^*$, and thus $\rho\tilde{L}y^* = \bzero$. 
We next focus on the separable inclusions involving $y^*$ and $\blambda^*$. 
Note that $\mathcal{X}^+_i \coloneqq \{[x_i; x^+_i] \mid h_i(x_i, x^+_i) \leq \bzero_{m_i} \}$ and in the operator $\optT$, we apply the normal cone of $\mathcal{X}^+_i$ to $y_i$ instead of the decisions $y^i_i$ and $\{y^j_j\}_{j \in \neighbN{+}{i}}$. 
Moreover, an explicit formulation of the normal cone is given by
\begin{align*}
N_{\mathcal{X}^+_i}([x_i; x^+_i]) = \begin{cases}
\varnothing, & \text{if}\;h_i(x_i, x^+_i) > \bzero, \\
\bzero, & \text{if}\;h_i(x_i, x^+_i) < \bzero, \\
g^T\rho_i, \forall \rho_i \in \rset{m_i}{+}, g \in [\partial_{x_j} h_i(x_i, x^+_i)]_{j \in \{i\}\cup \neighbN{+}{i}} & \text{if}\;h_i(x_i, x^+_i) \leq \bzero,
\end{cases}
\end{align*}
where $g$ is a matrix with dimension $m_i \times (n_i + n^+_i)$, and $\rho_i$ has each of its entry equal to zero if the corresponding inequality constraint is inactive and otherwise positive. 
For each player $i \in \playerN$, the inclusion involving the local decision $y^{i*}_i$ can be expressed as:
\begin{align}\label{eq:g-eq-pf-local-dec}
\begin{split}
\bzero \in& \partial_{x_i}\mathbb{J}_i(y^{i*}_i; y^{+*}_i, w^*_i) + N_{\mathcal{X}_i}(y^{i*}_i) + (\partial_{x_i}h_i(y^{i*}_i, y^{+*}_i))^T\rho^*_i + A^T_i\lambda^* + (-\sum_{j \in \neighbN{-}{i}}\mu^{*}_{ij}), \\
\bzero \in& -h_i(y^{i*}_i, y^{+*}_i) + N_{\rset{m_i}{+}}(\rho^*_i), 
\end{split}
\end{align}
where we let $y^{+*}_i \coloneqq [y^{j*}_i]_{j \in \neighbN{+}{i}}$, and $\rho^*_i \in \rset{m_i}{+}$ is some implicit Lagrange multiplier induced by the normal cone $N_{\mathcal{X}^+_i}$. 
For each out-neighbor $j$ of player $i$ ($j \in \neighbN{-}{i}$), the inclusion involving the local estimate $y^{i*}_j$ can be expressed as:
\begin{align}\label{eq:g-eq-pf-local-est}
    \bzero \in (\partial_{x_i}h_j(y^{j*}_j, y^{+*}_j))^T\rho^*_j + \mu^*_{ij}, 
\end{align}
where $\rho^*_j \in \rset{m_i}{+}$ is defined similarly. 
Substituting $\mu^*_{ij}$ in \eqref{eq:g-eq-pf-local-dec} with \eqref{eq:g-eq-pf-local-est}, we obtain
\begin{align}
\begin{split}
\bzero \in& \partial_{x_i}\mathbb{J}_i(y^{i*}_i; y^{+*}_i, w^*_i) + N_{\mathcal{X}_i}(y^{i*}_i) + \sum_{j \in \{i\}\cup\neighbN{-}{i}}(\partial_{x_i}h_j(y^{j*}_j, y^{+*}_j))^T\rho^*_j + A^T_i\lambda^* \\
\bzero \in& -h_i(y^{i*}_i, y^{+*}_i) + N_{\rset{m_i}{+}}(\rho^*_i).
\end{split}
\end{align}
Furthermore, the rows associated with $\blambda^*$ can be expressed as:
\begin{align*}
\begin{split}
& \bzero_{mN} \in N_{\rset{mN}{+}}(\blambda^*) - \Lambda\mathcal{R}y^* + \boldsymbol{c} + B_mz^* \\
& \Rightarrow \bzero_{m} \in (\bone^T_m \otimes I_m)(N_{\rset{mN}{+}}(\blambda^*) - \Lambda\mathcal{R}y^* + \boldsymbol{c} + B_mz^*)\\
& \Leftrightarrow \bzero_{m} \in N_{\rset{m}{+}}(\lambda^*) - \sum_{i \in \playerN} A_iy^{i*}_i + c. 
\end{split}
\end{align*}
Altogether, $(\{y^{i*}_i\}_{i \in \playerN}, \lambda^*, \{\rho^*_i\}_{i \in \playerN})$ satisfies the KKT conditions of a v-SGNE \eqref{eq:kkt-gvi-g}. 

In the other direction, assume the KKT systems \eqref{eq:kkt-gvi-g} admits a solution $(\{y^{i\dagger}_i\}_{i \in \playerN}, \lambda^\dagger, \{{\rho}^\dagger_i\}_{i \in \playerN})$. 
We construct the vector $y^\dagger$ by letting each local estimate $y^{j\dagger}_i = y^{j\dagger}_j$, for all $i \in \playerN$ and $j \in \neighbN{+}{i}$ and let $\blambda^\dagger = \bone_N \otimes \lambda^\dagger$. 
For each $i \in \playerN$ and $j \in \neighbN{-}{i}$, we let $\mu^\dagger_{ij}$ be such that
\begin{align*}
\begin{split}
& \mu^\dagger_{ij} \in -(\partial_{x_i} h_j(y^{j\dagger}_j, y^{+\dagger}_j))^T\rho^\dagger_j, \;\text{and} \\
& \bzero \in \partial_{x_i}\mathbb{J}_i(y^{i\dagger}_i; y^{+\dagger}_i, w^*_i) + N_{\mathcal{X}_i}(y^{i\dagger}_i) + (\partial_{x_i}h_i(y^{i\dagger}_i, y^{+\dagger}_i))^T\rho^\dagger_i + A^T_i\lambda^\dagger + (-\sum_{j \in \neighbN{-}{i}}\mu^{\dagger}_{ij}), \\
\end{split}
\end{align*}
with $\rho^\dagger_i \in \rset{m_i}{+}$ and satisfying $\bzero \in -h_i(y^{i\dagger}_i, y^{+\dagger}_i) + N_{\rset{m_i}{+}}(\rho^\dagger_i)$. 
We can further replace the term $(\partial_{x_i}h_i(y^{i\dagger}_i, y^{+\dagger}_i))^T\rho^\dagger_i$ and similar terms in the inclusions of local estimates with a normal cone operator $N_{\mathcal{X}^+_i}$. 
What remains to be shown is that there exists a vector $z^\dagger$ such that 
\begin{align*}
    \bzero_{mN} \in N_{\rset{mN}{+}}(\blambda^\dagger) - \Lambda\mathcal{R}y^\dagger + \boldsymbol{c} + B_mz^\dagger.
\end{align*}
The KKT conditions \eqref{eq:kkt-gvi-g} suggests $\bzero \in -(\sum_{i \in \playerN} A_iy^{i\dagger}_i - c) + N_{\rset{m}{+}}(\lambda^\dagger)$. 
Let $r \coloneqq c - \sum_{i \in \playerN} A_iy^{i\dagger}_i \geq \bzero_{m}$ and $\bzero_{m} \leq \lambda^\dagger \perp r \geq \bzero_{m}$. 
Notice that $B_m z$ spans the null space of $(\bone_N^T \otimes I_m)$. 
Hence, we can find a $z^\dagger$, such that $[c_i - A_iy^\dagger_i - \frac{1}{N}r]_{i \in \playerN} + B_mz^\dagger = \bzero_{Nm}$.
It immediately follows that 
\begin{align*}
\bzero_{mN} \leq \blambda^\dagger \perp \boldsymbol{c} - \diagA\mathcal{R}\by^\dagger + B_m z^\dagger \geq \bzero_{mN},
\end{align*}
which completes the proof that there exist $\mu^\dagger$ and $z^\dagger$ such that the stack vector $\psi^\dagger \coloneqq [y^\dagger; \bone_N \otimes \lambda^\dagger; \mu^\dagger, z^\dagger] \in \zer{\optT}$
\end{proof}

Note that with locally coupling constraints, the resolvent of $\optT$ can not be expressed explicitly as we did in Section~\ref{subsect:dist-sol-ppa} and carried out distributedly. 
To handle the computational difficulty induced by this, we resort to the Douglas-Rachford splitting \citeApndx[Sec.~26.3]{BauschkeHeinzH2017CAaM}, which separates the target operator into two monotone operators.  
The convergence of the Douglas-Rachford iteration relies on the nonexpansiveness of the reflect resolvents of the split operators. 
The operator $\optT$ considered can be split into the following two operators.
\begin{align}
\begin{split}
\optA: \begin{bmatrix} y \\ \blambda \\ \mu \\ z \end{bmatrix} \mapsto 
\begin{bmatrix} \extgjacob_{w^*}(y) + N_{\tilde{\mathcal{X}}}(y) \\ \boldsymbol c \\ \bzero \\ \bzero \end{bmatrix} + \frac{1}{2}
\begin{bmatrix}
\rho\tilde{L} & (\Lambda\mathcal{R})^T & \tilde{B} & \bzero \\
- \Lambda\mathcal{R} & \bzero & \bzero & B_m \\
-\tilde{B}^T & \bzero & \bzero & \bzero \\ 
\bzero & -B_m^T & \bzero & \bzero
\end{bmatrix}\begin{bmatrix} y \\ \blambda \\ \mu \\ z \end{bmatrix}, 
\end{split}
\end{align}
\begin{align}
\begin{split}
\optB: \begin{bmatrix} y \\ \blambda \\ \mu \\ z \end{bmatrix} \mapsto 
\begin{bmatrix} [N_{\mathcal{X}^+_i}(y_i)]_{i \in \playerN} \\ N_{\rset{mN}{+}}(\blambda) \\ \bzero \\ \bzero \end{bmatrix} + \frac{1}{2}
\begin{bmatrix}
\rho\tilde{L} & (\Lambda\mathcal{R})^T & \tilde{B} & \bzero \\
- \Lambda\mathcal{R} & \bzero & \bzero & B_m \\
-\tilde{B}^T & \bzero & \bzero & \bzero \\ 
\bzero & -B_m^T & \bzero & \bzero
\end{bmatrix}\begin{bmatrix} y \\ \blambda \\ \mu \\ z \end{bmatrix}, 
\end{split}
\end{align}
The associated design matrix to distribute the computation is given by:
\begin{align}
\begin{split}
\Phi \coloneqq 
\begin{bmatrix}
\btau^{-1}_1 - \frac{1}{2}\rho\tilde{L} & -\frac{1}{2}(\Lambda\mathcal{R})^T & -\frac{1}{2}\tilde{B} & \bzero \\
- \frac{1}{2}\Lambda\mathcal{R} & \btau^{-1}_2 & \bzero & -\frac{1}{2}B_m \\
-\frac{1}{2}\tilde{B}^T & \bzero & \btau^{-1}_3 & \bzero \\ 
\bzero & -\frac{1}{2}B_m^T & \bzero & \btau^{-1}_4
\end{bmatrix}, 
\end{split}
\end{align}
where $\btau_1, \btau_2, \btau_3$, and $\btau_4$ are properly chosen such that $\Phi$ is positive definite. 
Combining the composite of the reflected resolvents of $\Phi^{-1}\optA$ and $\Phi^{-1}\optB$ and the KM algorithm yields the iteration given as follows:
\begin{align}\label{eq:D-R}
\begin{split}
& \text{Calculate}\; J_{\Phi^{-1}\optA}: \supsub{\psi}{(k+1)}{} \coloneqq J_{\Phi^{-1}\optA}(\supsub{\tilde{\psi}}{(k)}{});\\
& \text{R-R updates}: \supsub{\hat{\psi}}{(k+1)}{} \coloneqq 2 \cdot \supsub{\psi}{(k+1)}{} - \supsub{\tilde{\psi}}{(k)}{}; \\
& \text{Calculate}\; J_{\Phi^{-1}\optB}: \supsub{\bar{\psi}}{(k+1)}{} \coloneqq J_{\Phi^{-1}\optB}(\supsub{\hat{\psi}}{(k+1)}{}); \\
& \text{K-M updates}: \supsub{\tilde{\psi}}{(k+1)}{} \coloneqq \supsub{\tilde{\psi}}{(k)}{} + 2\supsub{\gamma}{(k)}{}(\supsub{\bar{\psi}}{(k+1)}{} - \supsub{\psi}{(k+1)}{}).
\end{split}
\end{align}
Note that under Assumptions~\ref{asp:commtopo}, \ref{asp:obj-g} to \ref{asp:exist-vsgne}, and \ref{asp:convg}, we can prove a similar result concerning the quasinonexpansiveness and continuity of $R_{\Phi^{-1}\optA}$ and the convergence of the above iteration to a v-SGNE of the original problem as what we did in Theorem~\ref{thm:exact-convg-pf}. 

Likewise, by considering the unknown parameters $w^*$ in objectives and introducing parameter estimates $\hat{w}^{(k)}$ at each iteration $k$, we substitute the fixed-point iteration operator $\mathscr{R}_* = R_{\Phi^{-1}\optB} \circ R_{\Phi^{-1}\optA}$ with its estimate $\mathscr{R}^{(k)} = R_{\Phi^{-1}\optB} \circ R_{\Phi^{-1}\optA^{(k)}}$, and obtain the v-SGNE seeking dynamics described in Subroutine~\ref{alg:node-edge-g}. 
The proof of convergence in Section~\ref{sec:convg-analy} can then be carried over to verify that by combining Subroutine~\ref{alg:node-edge-g} and Subroutine~\ref{alg:param-est}, we can obtain learning dynamics that converge to a fixed point of the operator $\mathscr{R}_*$ almost surely. 

\begin{algorithm2e}
\SetAlgorithmName{Subroutine}{subroutine}{List of Subroutines}
\SetAlgoLined
\caption{Distributed Generalized Nash Equilibrium Seeking}\label{alg:node-edge-g}
\textbf{Available variables:} $\{{\tilde{y}}^{(k)}_{i}\}$, $\{{\tilde{\lambda}}^{(k)}_{i}\}$, $\{{\tilde{\mu}}^{(k)}_{ji}\}$, $\{\tilde{z}^{(k)}_{ji}\}$, $\{\hat{w}^{(k)}_i\}$ \;
\textbf{At the $k$-th iteration: }\\
\textbf{Each player $i \in \playerN$}: ($R_{\Phi^{-1}\optA}$)\\
\qquad Receive $\{{\tilde{y}}^{j(k)}_j\}_{j \in \neighbN{+}{i}}$ from its in-neighbors and $\{{\tilde{y}}^{i(k)}_j\}_{j \in \neighbN{-}{i}}$ from its out-neighbors\;
\qquad Receive $\{\tilde{\mu}^{(k)}_{ij}\}_{j \in \neighbN{-}{i}}$ and $\{\tilde{z}^{(k)}_{ij}\}_{j \in \neighbN{-}{i}}$ from its out-neighbors\;

\qquad $y^{j(k+1)}_{i} = \tilde{y}^{j(k)}_{i} - \tau_{1i}\Big(\frac{\rho}{2}(\tilde{y}^{j(k)}_{i} - \tilde{y}^{j(k)}_{j}) + \frac{1}{2}\tilde{\mu}^{(k)}_{ji}\Big), \quad \forall j \in \neighbN{+}{i}$\;

\qquad ${y}^{i(k+1)}_{i} = \underset{{y}^{i}_{i} \in \mathcal{X}_i}{\argmin}\Big\{ \mathbb{J}_i({y}^i_i; {y}^{+(k+1)}_i, \hat{w}^{(k)}_{i})  + \frac{1}{2\tau_{1i}}\norm{{y}^{i}_{i} - \tilde{y}^{i(k)}_{i}}^2_2 $ \\
\qquad\qquad\qquad\qquad $+ \frac{1}{2}(\rho\underset{j \in \neighbN{-}{i}}{\sum}(\tilde{y}^{i(k)}_{i} - \tilde{y}^{i(k)}_{j})+A_i^T\tilde{\lambda}^{(k)}_i - \sum_{j \in \neighbN{-}{i}}\tilde{\mu}^{(k)}_{ij})^T{y}^i_i\Big\}$\; 
\qquad $\lambda^{(k+1)}_i = \tilde{\lambda}^{(k)}_i + \tau_{2i}(A_iy^{i(k+1)}_{i} - \frac{1}{2}A_i\tilde{y}^{i(k)}_{i} - \frac{1}{2}(\sum_{j \in \neighbN{+}{i}}\tilde{z}^{(k)}_{ji} - \sum_{j \in \neighbN{-}{i}}\tilde{z}^{(k)}_{ij}) - c_i)$\; 
\qquad Receive $\{y^{j(k+1)}_{j}\}_{j \in \neighbN{-}{i}}$ and $\{\lambda^{(k+1)}_{j}\}_{j \in \neighbN{-}{i}}$ from its out-neighbors\;
\qquad $\mu^{(k+1)}_{ji} = \tilde{\mu}^{(k)}_{ji} + \tau_{3i}((y^{j(k+1)}_{i} - y^{j(k+1)}_{j}) - \frac{1}{2}(\tilde{y}^{j(k)}_{i} - \tilde{y}^{j(k)}_{j}))$\;
\qquad $z^{(k+1)}_{ji} = \tilde{z}^{(k)}_{ji} + \tau_{4i}((\lambda^{j(k+1)}_{i} - \lambda^{j(k+1)}_{j}) - \frac{1}{2}(\tilde{\lambda}^{(k)}_{i} - \tilde{\lambda}^{(k)}_{j}))$\; 
\qquad \textbf{R-R updates: } $\hat{\psi}^{(k+1)}_i = 2\psi^{(k+1)}_i - \tilde{\psi}^{(k)}_i$\;
\textbf{Each player $i \in \playerN$}: ($R_{\Phi^{-1}\optB}$ and K-M updates)\\
\qquad Receive $\{{\hat{y}}^{j(k+1)}_j\}_{j \in \neighbN{+}{i}}$ from its in-neighbors and $\{{\hat{y}}^{i(k+1)}_j\}_{j \in \neighbN{-}{i}}$ from its out-neighbors\;
\qquad Receive $\{\hat{\mu}^{(k+1)}_{ij}\}_{j \in \neighbN{-}{i}}$ and $\{\hat{z}^{(k+1)}_{ij}\}_{j \in \neighbN{-}{i}}$ from its out-neighbors\;

\qquad $\check{y}^{j(k+1)}_{i} = \hat{y}^{j(k+1)}_{i} - \tau_{1i}\Big(\frac{\rho}{2}(\hat{y}^{j(k+1)}_{i} - \hat{y}^{j(k+1)}_{j}) + \frac{1}{2}\hat{\mu}^{(k+1)}_{ji}\Big), \quad \forall j \in \neighbN{+}{i}$\;

\qquad $\check{y}^{i(k+1)}_{i} = \hat{y}^{i(k+1)}_{i} - \tau_{1i}\Big(\frac{\rho}{2}(\underset{{j\in \neighbN{-}{i}}}{\sum}(\hat{y}^{i(k+1)}_{i} - \hat{y}^{i(k+1)}_{j})) + \frac{1}{2}A_i^T\hat{\lambda}^{(k+1)}_{i} -\underset{{j\in \neighbN{-}{i}}}{\sum} \frac{1}{2}\hat{\mu}^{(k+1)}_{ij}\Big)$\;

\qquad Obtain $\bar{y}^{i(k+1)}_{i}$ and $\bar{y}^{j(k+1)}_{i}$ with $j \in \neighbN{+}{i}$ by solving the following problem: 
\begin{equation*}
\begin{cases}
\minimize_{y_i} & \frac{1}{2}\norm{y^i_i - \check{y}^{i(k+1)}_{i}}^2_2 + \frac{1}{2}\sum_{j\in \neighbN{+}{i}} \norm{y^j_i - \check{y}^{j(k+1)}_{i}}^2_2\\
\subj & h(y^i_i, \{y^j_i\}_{j \in \neighbN{+}{i}}) \leq \bzero \\
\end{cases}
\end{equation*}

\qquad $\bar{\lambda}^{(k+1)}_{i} = \proj_{\rset{m}{+}}\big[ \hat{\lambda}^{(k+1)}_i + \tau_{2i}(A_i \bar{y}^{i(k+1)}_i - \frac{1}{2}A_i\hat{y}^{i(k+1)}_i) - \frac{1}{2}(\underset{j \in \neighbN{+}{i}}{\sum}\hat{z}^{(k+1)}_{ji} - \underset{j \in \neighbN{-}{i}}{\sum}\hat{z}^{(k+1)}_{ij}) \big]$\;
\qquad Receive $\{\bar{y}^{j(k+1)}_{j}\}_{j \in \neighbN{-}{i}}$ and $\{\bar{\lambda}^{(k+1)}_{j}\}_{j \in \neighbN{-}{i}}$ from its out-neighbors\;
\qquad $\bar{\mu}^{(k+1)}_{ji} = \hat{\mu}^{(k+1)}_{ji} + \tau_{3i}((\bar{y}^{j(k+1)}_{i} - \bar{y}^{j(k+1)}_{j}) - \frac{1}{2}(\hat{y}^{j(k+1)}_{i} - \hat{y}^{j(k+1)}_{j}))$\;
\qquad $\bar{z}^{(k+1)}_{ji} = \hat{z}^{(k+1)}_{ji} + \tau_{4i}((\bar{\lambda}^{j(k+1)}_{i} - \bar{\lambda}^{j(k+1)}_{j}) - \frac{1}{2}(\hat{\lambda}^{(k+1)}_{i} - \hat{\lambda}^{(k+1)}_{j}))$\; 
\qquad \textbf{K-M updates: } $\tilde{\psi}^{(k+1)}_i = \tilde{\psi}^{(k)}_i + 2\gamma^{(k)}(\bar{\psi}^{(k+1)}_i - \psi^{(k+1)}_i)$.

\textbf{Each player $i$ returns the solution of the augmented best-response subproblem: $y^{i(k+1)}_i$}
\end{algorithm2e}

\subsection{Optimize Expected-Value Objectives Using Inexact Solvers}
In Subroutine~\ref{alg:node-edge} and Subroutine~\ref{alg:node-edge-g}, when updating its decision, each player $i$ should solve an augmented best-response subproblems w.r.t. $\hat{\mathbb{J}}_i(\cdot; \hat{w}^{(k)})$, the details of which are given as:
\begin{align}\label{eq:argmin-subprob}
\begin{split}
& \text{PPA for SNEP}: \\
& \qquad \tilde{y}^{i(k+1)}_{i} = \underset{\tilde{y}^{i}_{i} \in \mathcal{X}_i}{\argmin}\Big\{ \mathbb{J}_i(\tilde{y}^i_i; \tilde{y}^{+(k+1)}_i, \hat{w}^{(k)}_{i}) + \rho(\underset{j \in \neighbN{-}{i}}{\sum}{y}^{i(k)}_{i} - {y}^{i(k)}_{j})^T\tilde{y}^i_i + \frac{1}{2\tau_{i0}}\norm{\tilde{y}^{i}_{i} - {y}^{i(k)}_{i}}^2_2 \Big\}; \\
& \text{DR for SGNEP}: \\
& \qquad {y}^{i(k+1)}_{i} = \underset{{y}^{i}_{i} \in \mathcal{X}_i}{\argmin}\Big\{ \mathbb{J}_i({y}^i_i; {y}^{+(k+1)}_i, \hat{w}^{(k)}_{i}) + \frac{1}{2\tau_{i}}\norm{{y}^{i}_{i} - \tilde{y}^{i(k)}_{i}}^2_2 \\
& \qquad\qquad\qquad\qquad\qquad  + \frac{1}{2}(\rho\underset{j \in \neighbN{-}{i}}{\sum}(\tilde{y}^{i(k)}_{i} - \tilde{y}^{i(k)}_{j})+A_i^T\tilde{\lambda}^{(k)}_i - \sum_{j \in \neighbN{-}{i}}\tilde{\mu}^{(k)}_{ij})^T{y}^i_i\Big\}. 
\end{split}
\end{align}
A fundamental question that arises here is how we can solve these problems when the analytical expression of the expected-value objective $\hat{\mathbb{J}}_i$ is complex or even does not exist. 
One potential answer could be leveraging some stochastic optimization methods, such as projected stochastic subgradient descent, random coordinate descent, etc. 
Then the ensuring question would be how we can adjust these stochastic optimization methods and properly tune their parameters to fit them into the proposed learning dynamics and ensure the convergence of the generated sequence. 
In this appendix, we will leverage projected stochastic subgradient descent to construct an inexact solver and provide explicit statements concerning the choices of step sizes, which are largely based on the results in \citeApndx[Sec.~IV.B]{huang2021sgnep}. 
Since the analysis of one can be straightforwardly carried over to the other, in the remaining content of this appendix, we will focus on dissecting the PPA-based dynamics for SNEP. 

Previously, we use $\mathscr{R}^{(k)}$ to denote an estimate for the exact fixed-point iteration operator $\mathscr{R}_*$, with the unknown parameters $w^*$ replaced by the current estimate $\hat{w}^{(k)}$. 
Here, we let ${\mathscr{R}}^{(k)}_s$ denote the (scenario-based) approximate operator to the estimate $\mathscr{R}^{(k)}$. 
With the introduction of the approximation to the involved subproblems, the overall error can be defined as:
\begin{align*}
\begin{split}
\epsilon^{(k)} \coloneqq {\mathscr{R}}^{(k)}_s(y^{(k)}) - \mathscr{R}_*(y^{(k)}), 
\; \text{and} \; 
\varepsilon^{(k)} \coloneqq \norm{\epsilon^{(k)}}_\pspace. 
\end{split}
\end{align*}
According to the conclusion of Theorem~\ref{thm:main-convg-thm}, it suffices for us to ensure that $(\norm{y^{(k)}}_\pspace)_{k \in \nset{}{}}$ is bounded a.s. and $\sum_{k \in \nset{}{}} \gamma^{(k)}\expt{}{\varepsilon^{(k)} \mid \mathcal{F}_{k}} < +\infty$ a.s. 
Furthermore, the error norm $\varepsilon^{(k)}$ can be further decomposed as $\varepsilon^{(k)} \leq \norm{{\mathscr{R}}^{(k)}_s(y^{(k)}) - \mathscr{R}^{(k)}(y^{(k)})}_\pspace + \norm{\mathscr{R}^{(k)}(y^{(k)}) - \mathscr{R}_*(y^{(k)})}_\pspace$. 
We let $\varepsilon^{(k)}_s \coloneqq \norm{{\mathscr{R}}^{(k)}_s(y^{(k)}) - \mathscr{R}^{(k)}(y^{(k)})}_\pspace$ and $\varepsilon^{(k)}_e \coloneqq \norm{\mathscr{R}^{(k)}(y^{(k)}) - \mathscr{R}_*(y^{(k)})}_\pspace$. 
Lemma~\ref{le:resol-upper-bd} implies that $\expt{}{\varepsilon^{(k)}_e \mid \mathcal{F}_k} \leq (\tilde{\alpha}\norm{y^{(k)}}_\pspace + \tilde{\beta}) \expt{}{\norm{\Delta\hat{w}^{(k)}}_2 \mid \mathcal{F}_k}$.

To derive a decaying upper bound for $\varepsilon^{(k)}_s$, we first define the following augmented scenario-based objective function for each player $i$ at iteration $k$:
\begin{align*}
\begin{split}
\hat{J}^{(k)}_i(y^i_i; \hat{w}^{(k)}, \xi^{(k)}_{i,t}) \coloneqq & J_i(y^i_i; s_i(\tilde{y}^{+(k+1)}_i; \xi^{(k)}_{i,t}, \hat{w}^{(k)})) \\
& + \rho(\underset{j \in \neighbN{-}{i}}{\sum}{y}^{i(k)}_{i} - {y}^{i(k)}_{j})^T\tilde{y}^i_i + \frac{1}{2\tau_{i0}}\norm{\tilde{y}^{i}_{i} - {y}^{i(k)}_{i}}^2_2.
\end{split}
\end{align*}
We will use $k$ to index the major iterations (the iteration of the SNE seeking Subroutine~\ref{alg:node-edge}) and $t$ to index the minor iterations (the iteration of the inexact solver in $\argmin$ subproblems of Subroutine~\ref{alg:node-edge}). 
Let $T^{(k)}_i$ denote the total number of the projected stochastic subgradient steps taken in the $k$-th major iteration by player $i$.
The subgradient of the scenario-based objective function at the $k$-th major iteration and the $t$-th minor iteration is denoted by $g^{(k)}_{i,t} \in \partial_{y^i_i}\hat{J}^{(k)}_i(\tilde{y}^{i(k+1)}_{i,t}; \hat{w}^{(k)}, \xi^{(k)}_{i,t})$, where $t = 0, 1, \ldots, T^{(k)}_i-1$. 
The following assumptions are made concerning the subgradient:
\begin{assumption}\label{asp:proj-stoch-subgrad}
For each player $i \in \playerN$, at each major iteration $k$ and minor iteration $t$ of Subroutine~\ref{alg:node-edge}, there exists a $g^{(k)}_{i,t} \in \partial_{y^i_i}\hat{J}^{(k)}_i(\tilde{y}^{i(k+1)}_{i,t}; \hat{w}^{(k)}, \xi^{(k)}_{i,t})$ such that the following two statements hold:
\begin{outline}[enumerate]
\1 (Unbiasedness) $\expt{}{g^{(k)}_{i,t} \mid \sigma\{\mathcal{F}_k, \xi^{(k)}_{i, [t]}\}}$ is almost surely a subgradient of the expected-value augmented objective $\hat{\mathbb{J}}_i^{(k)}(\cdot)$ at $y^{i(k+1)}_{i,t}$, where $\xi^{(k)}_{i, [t]} \coloneqq \{ \xi^{(k)}_{i, 0}, \ldots, \xi^{(k)}_{i,t-1} \}$ with $\xi^{(k)}_{i, [0]} \coloneqq \varnothing$;
\1 (Finite variance) $\expt{}{\norm{g^{(k)}_{i,t}}^2_2 \mid \mathcal{F}_k} \leq \alpha_{g,i}^2\norm{\tilde{\psi}^{(k)}}^2_2 + \beta_{g,i}^2$ a.s. for some positive constants $\alpha_{g,i}$ and $\beta_{g,i}$.
\end{outline}
\end{assumption}
The proposed projected stochastic subgradient solver for $\argmin$ subproblems of Subroutine~\ref{alg:node-edge} is given in Algorithm~\ref{alg:proj-stoch-subgrad}. 
\begin{algorithm2e}
\SetAlgoLined
\caption{Projected Stochastic Subgradient Inexact Solver}
\label{alg:proj-stoch-subgrad}
\textbf{For each player $i \in \playerN$, at the $k$-th major iteration of Subroutine~\ref{alg:node-edge}:} \\
\textbf{Initialize:} $\tilde{y}^{i(k+1)}_{i,0} \coloneqq {y}^{i(k)}_{i}; $\\
\For{$t = 0 \text{ to } T^{(k)}_i - 1$}{
$\tilde{y}^{i(k+1)}_{i,t+1} \coloneqq \proj_{\mathcal{X}_i}[\tilde{y}^{i(k+1)}_{i,t} - \kappa_{i,t} \cdot g^{(k)}_{i,t}]$, 
with $\kappa_{i,t} \coloneqq \frac{2\tau_{i0}}{t+2}$;\\
}
\textbf{Return:} $\tilde{y}^{i(k+1)}_{i} \coloneqq \tilde{y}^{i(k+1)}_{i, T^{(k)}_{i}}$.
\end{algorithm2e}
\begin{lemma}\label{le:proj-stoch-convg-rate}
Suppose Assumptions~\ref{asp:commtopo} to \ref{asp:convg} and \ref{asp:proj-stoch-subgrad} hold. 
If the $\argmin$ subproblems in Subroutine~\ref{alg:node-edge} are solved using the inexact solver in Algorithm~\ref{alg:proj-stoch-subgrad}, then
\begin{align*}
\expt{}{\varepsilon^{(k)}_s \mid \mathcal{F}_k} \leq 2\tau_{i0}\sqrt{T^{(k)}_{i}}(\alpha_{g,i}\norm{y^{(k)}}_2 + \beta_{g,i}) \; \text{a.s.}
\end{align*}
\end{lemma}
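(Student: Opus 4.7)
The plan is to establish the claimed bound by a standard analysis of projected stochastic subgradient descent on a strongly convex objective, tailored to the structure of the augmented best-response subproblem in Subroutine~\ref{alg:node-edge}. The key structural observation is that although $\mathbb{J}_i$ itself need not be strongly convex, the proximal term $\frac{1}{2\tau_{i0}}\norm{\tilde{y}^{i}_{i} - {y}^{i(k)}_{i}}^2_2$ built into the augmented objective renders $\hat{\mathbb{J}}^{(k)}_i(\cdot;\hat{w}^{(k)}_i)$ strongly convex on $\mathcal{X}_i$ with modulus $\mu \coloneqq 1/\tau_{i0}$, regardless of the curvature of $\mathbb{J}_i$. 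By Assumption~\ref{asp:olse}~(iii) this objective admits a unique minimizer $\tilde{y}^{i(k+1)}_{i,*}$, which coincides with the $y^i_i$-component of $\mathscr{R}^{(k)}(y^{(k)})$, so the analysis reduces to bounding $\expt{}{\norm{\tilde{y}^{i(k+1)}_{i,T^{(k)}_i} - \tilde{y}^{i(k+1)}_{i,*}}_2 \mid \mathcal{F}_k}$.

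First I would derive a one-step recursion for $a_t \coloneqq \expt{}{\norm{\tilde{y}^{i(k+1)}_{i,t} - \tilde{y}^{i(k+1)}_{i,*}}^2_2 \mid \mathcal{F}_k}$. Starting from $\tilde{y}^{i(k+1)}_{i,t+1} = \proj_{\mathcal{X}_i}[\tilde{y}^{i(k+1)}_{i,t} - \kappa_{i,t} g^{(k)}_{i,t}]$, nonexpansiveness of $\proj_{\mathcal{X}_i}$ and the identity $\tilde{y}^{i(k+1)}_{i,*} = \proj_{\mathcal{X}_i}(\tilde{y}^{i(k+1)}_{i,*})$ give the standard quadratic expansion $\norm{\tilde{y}^{i(k+1)}_{i,t+1} - \tilde{y}^{i(k+1)}_{i,*}}^2_2 \leq \norm{\tilde{y}^{i(k+1)}_{i,t} - \tilde{y}^{i(k+1)}_{i,*}}^2_2 - 2\kappa_{i,t}\langle \tilde{y}^{i(k+1)}_{i,t} - \tilde{y}^{i(k+1)}_{i,*}, g^{(k)}_{i,t}\rangle + \kappa_{i,t}^2\norm{g^{(k)}_{i,t}}^2_2$. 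Conditioning on $\sigma\{\mathcal{F}_k, \xi^{(k)}_{i,[t]}\}$, Assumption~\ref{asp:proj-stoch-subgrad}(i) lets me replace the conditional mean of $g^{(k)}_{i,t}$ by a subgradient of $\hat{\mathbb{J}}^{(k)}_i$ at $\tilde{y}^{i(k+1)}_{i,t}$; strong convexity then disposes of the inner product via $\langle g, \tilde{y}^{i(k+1)}_{i,t} - \tilde{y}^{i(k+1)}_{i,*}\rangle \geq c\mu\norm{\tilde{y}^{i(k+1)}_{i,t} - \tilde{y}^{i(k+1)}_{i,*}}^2_2$ for a positive constant $c$. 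Invoking Assumption~\ref{asp:proj-stoch-subgrad}(ii) for the quadratic-in-gradient term and collapsing back to $\mathcal{F}_k$-conditional expectations via the tower property yields
\begin{align*}
a_{t+1} \leq \big(1 - 2c\mu\kappa_{i,t}\big)\, a_t + \kappa_{i,t}^2\big(\alpha_{g,i}^2\norm{y^{(k)}}_2^2 + \beta_{g,i}^2\big).
\end{align*}

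Next, inserting $\kappa_{i,t} = 2\tau_{i0}/(t+2) = 2/(\mu(t+2))$ makes the contraction factor of the form $(t-c')/(t+2)$ for a fixed $c'\geq 0$, and the additive noise is $O(\tau_{i0}^2/(t+2)^2)(\alpha_{g,i}^2\norm{y^{(k)}}_2^2 + \beta_{g,i}^2)$. Multiplying through by a polynomial weight proportional to $(t+1)(t+2)$ converts the recursion into a telescoping sum, and a routine induction then yields $a_{T^{(k)}_i} = O\!\bigl(\tau_{i0}^2(\alpha_{g,i}^2\norm{y^{(k)}}_2^2 + \beta_{g,i}^2)/T^{(k)}_i\bigr)$. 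Jensen's inequality promotes this squared-distance bound to the first-moment bound $\expt{}{\norm{\tilde{y}^{i(k+1)}_i - \tilde{y}^{i(k+1)}_{i,*}}_2 \mid \mathcal{F}_k} = O\!\bigl(\tau_{i0}(\alpha_{g,i}\norm{y^{(k)}}_2 + \beta_{g,i})/\sqrt{T^{(k)}_i}\bigr)$; stacking over $i \in \playerN$ and passing from the Euclidean norm to $\norm{\cdot}_\pspace$ via the eigenvalues $\ubar{\sigma}_\Phi, \bar{\sigma}_\Phi$ of $\Phi$, in exactly the same fashion as the closing step of Lemma~\ref{le:resol-upper-bd}, then produces the claimed bound on $\expt{}{\varepsilon^{(k)}_s \mid \mathcal{F}_k}$.

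The main obstacle I anticipate is the measurability bookkeeping rather than the algebra of the recursion. The subproblem input $\tilde{y}^{+(k+1)}_i$ is $\mathcal{F}_k$-measurable, having been produced in the linear step of Subroutine~\ref{alg:node-edge}, but the unbiasedness property of $g^{(k)}_{i,t}$ holds only conditional on the enlarged $\sigma$-field $\sigma\{\mathcal{F}_k, \xi^{(k)}_{i,[t]}\}$, whereas the ultimate target $\expt{}{\varepsilon^{(k)}_s \mid \mathcal{F}_k}$ is phrased conditional on $\mathcal{F}_k$ alone. The one-step recursion must therefore be derived at the finer level of conditioning and only collapsed to $\mathcal{F}_k$-conditional expectations via the tower property after the cross term has been eliminated; this is what permits the resulting bound to feed cleanly into Theorems~\ref{thm:main-convg-thm} and \ref{thm:bdd-est-seq-sumb} through the decomposition $\varepsilon^{(k)} \leq \varepsilon^{(k)}_s + \varepsilon^{(k)}_e$.
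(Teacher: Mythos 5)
Your proposal is correct in substance and follows what is essentially the intended argument: the paper itself gives no in-text proof (it defers to Appendix~B of the cited reference), but the step size $\kappa_{i,t} = 2\tau_{i0}/(t+2)$ and the constant $2\tau_{i0}$ in the bound make clear that the intended proof is exactly the standard projected stochastic subgradient analysis for a $1/\tau_{i0}$-strongly convex objective that you carry out, including the tower-property handling of the conditioning on $\sigma\{\mathcal{F}_k, \xi^{(k)}_{i,[t]}\}$, and your $1/\sqrt{T^{(k)}_i}$ rate is the version actually needed downstream (the $\sqrt{T^{(k)}_{i}}$ appearing as a factor in the printed statement is evidently a typo, since the subsequent summability argument with $T^{(k)}_i \propto k^{\alpha_3}$ only works with $\sqrt{T^{(k)}_i}$ in the denominator). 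One caveat: your claim that the proximal term makes $\hat{\mathbb{J}}^{(k)}_i(\cdot;\hat{w}^{(k)}_i)$ strongly convex with modulus $1/\tau_{i0}$ ``regardless of the curvature of $\mathbb{J}_i$'' is overstated --- it requires that $\mathbb{J}_i(\cdot;\tilde{y}^{+(k+1)}_i,\hat{w}^{(k)}_i)$ plus the linear penalty be convex at the \emph{estimated} parameters, which the listed assumptions only supply implicitly (Assumption~\ref{asp:proj-stoch-subgrad}(i) presupposes subgradients of the augmented objective, and Assumption~\ref{asp:olse}(iii) gives only strict convexity), so you should state this convexity explicitly as the ingredient that lets the quadratic term supply the modulus. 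Finally, note that your stacking and Euclidean-to-$\pspace$ conversion step necessarily introduces norm-equivalence constants (and a max or sum over players), so the resulting bound matches the stated per-player constant only up to such factors; this looseness is inherited from the lemma statement itself, whose right-hand side carries the index $i$ while $\varepsilon^{(k)}_s$ is a stacked quantity.
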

\begin{proof}
See \citeApndx[Appendix~B]{huang2021sgnep}.
\end{proof}
By choosing $T^{(k)}_i \propto k^{\alpha_3}$ with $\alpha_3 \geq 1$ and $\gamma^{(k)} = 1/k^{\alpha_1}$ with $1/2 < \alpha_1 \leq 1$, we can similarly establish the almost-sure convergence of the learning dynamics with inexact solver as in the proof of Theorem~\ref{thm:bdd-est-seq-sumb}.

\subsection{Case Study and Numerical Experiments}

\subsubsection{Game Setup}
In this section, we evaluate the performance of the proposed dynamics with a variant of the Nash-Cournot game over a network. 
In this variant, $N$ manufacturers/players indexed by $\mathcal{N}$ are involved in producing several homogeneous commodities and competing for different local markets. 
For each manufacturer $i$ in this network, its scenario-based local objective function is given by $J_i(x_i; s_i(x^+_i; \xi_i, w^*_i)) \coloneqq f_i(x_i) - (b_0 + b^*_{i} - \sum_{j \in \neighbN{+}{i} \cup \{i\}}P^*_{ji}H^{*}_{j}x_j + \xi_i) \cdot H^{*}_ix_i$, where $x_i \in \mathcal{X}_i \subseteq \rset{n_i}{}$ denotes the decision vector of player $i$, $f_i(x_i) = x_i^TQ_ix_i + q_i^Tx_i$ denotes the local production cost function with $Q_i \in \pdset{n_i}{++}$ and $q_i \in \rset{n_i}{+}$, the matrix $H^*_i \in \rset{1 \times n_i}{+}$, and $P^*_{ji}$ corresponds to the certain entry in the weighted adjacency matrix $P^*$, and the unknown parameters $w^*_i$ is defined as $w^*_i \coloneqq [b^*_{i}; [P^*_{ji}H^{*T}_j]_{j \in \neighbN{+}{i}}]$. 
The local feasible set $\mathcal{X}_i$ is set to be the direct product of $n_i$ connected compact intervals $[0, \mathcal{X}_{ij\text{max}}]$ for $j = 1, \ldots, n_i$. 
Overall, each player $i \in \playerN$, given the decision vectors of its neighbors $x^+_i$ aims to solve the following optimization problem:
\begin{align}
\minimize_{x_i \in \mathcal{X}_i} \expt{}{J_i(x_i; s_i(x^+_i; \xi_i, w^*_i))}.
\end{align}

For notational brevity, let $n \coloneqq \sum_{i \in \playerN} n_i$, $x \coloneqq [x_1; \cdots; x_N]$, $H^* \coloneqq \blkd{H^*_1, \ldots, H^*_N} \in \rset{N \times n}{}$, $Q \coloneqq \blkd{Q_1, \ldots, Q_N}$, $D^*$ denote the weighted degree matrix associated with $P^*$. 
In this case, the pseudo-gradient operator $\gjacob_{w^*}$ is given by:
\begin{align}
\gjacob_{w^*}: x \mapsto M_Fx + [q_i]_{i \in \playerN} - [H^{*T}_i(b_0+b^*_i)]_{i \in \playerN},
\end{align}
where $M_F \coloneqq 2Q + H^{*T}(D^*+P^*)H^*$. 
We can choose proper parameters such that $\frac{1}{2}(M_F + M_F^T)$ is positive definite. 
The minimal eigenvalue $\ubar{\sigma}_{\gjacob_{w^*}} > 0$ of $\frac{1}{2}(M_F + M_F^T)$ and the maximal singular value $\bar{s}_{\gjacob_{w^*}} > 0$ of $M_F$ are the strongly monotone constant $\eta$ and the Lipschitz constant $\theta_1$ of the pseudogradient $\gjacob$, respectively. 
Similarly, the extended pseudogradient $\extgjacob_{w^*}$ can be expressed as: 
\begin{align}
\extgjacob_{w^*}: y \mapsto \mathcal{R}(I_N \otimes M_F)y + [q_i]_{i \in \playerN} - [H^{*T}_i(b_0+b^*_i)]_{i \in \playerN}.
\end{align}
The Lipschitz constant of $\extgjacob_{w^*}$ is given by the greatest singular value $\bar{s}_{\extgjacob_{w^*}}$ of $\mathcal{R}(I_N \otimes M_F)$. 

\subsubsection{Simulation Results}

We consider a game played by $N = 10$ players. 
The communication graph consists of an undirected circle and $10$ randomly selected edges. 
Each local feasible set $\mathcal{X}_i$ is the direct product of $n_i$ intervals, i.e., $\mathcal{X}_i \coloneqq \prod_{j=1}^{n_i}[0, \mathcal{X}_{ij \max}]$. 
The related parameters are drawn uniformly from suitable intervals. 
The dimension $n_i$ satisfies $n_i \sim U\{3, 4, 5\}$, and each upper bound of the local feasible sets satisfies $\mathcal{X}_{ij \max} \sim U[10, 20]$. 
We set the base price to be $b_0 = 50$ and $b^*_i \sim U[3, 7]$. 
The production cost $f_i$ has each entry of $Q_i$ chosen from $U[4.4, 4.6]$ and each entry of $q_i$ chosen from $U[1, 1.2]$. 
The entries of adjacency matrix $P^*$ are uniformly drawn from $U[0.8, 1]$ and later regularized to have each row summing to one. 
Each entry of the unknown weights $H^*_i$ is selected from $U[0.8, 4.5]$.
Each random noise $\xi_i$ has the truncated Gaussian distribution with mean zero and standard deviation $0.5$. 
Player $i$ chooses each entry of its random exploration vector from $U[-\delta_i, +\delta_i]$, where $\delta_i = \frac{0.01}{2\sqrt{n_i}}\min\{\mathcal{X}_{ij\max}: j = 1, \ldots, n_i\}$. 
The parameters $\rho$ and $\btau$ are properly chosen such that $\optT$ is maximally monotone and $\Phi$ is positive definite. 

We set the sequence $(\gamma^{(k)})_{k\in \nset{}{}}$ to be $\gamma^{(k)} = k^{-0.501}$. 
The package CVXPY 1.0.31 (\citeApndx{diamond2016cvxpy, agrawal2018rewriting}) is used to construct the exact solver for Subroutine~\ref{alg:node-edge} and the quadratic programming inside Subroutine~\ref{alg:param-est}. 
Since the formulations of the expected-value objectives in this example are immediate, we first consider the following two cases that involved the exact solver: (i) SNE seeking with the precise parameters \eqref{eq:ppa-km}; 
(ii) SNE learning with the estimated parameters (Algorithm~\ref{alg:assemble}). 
The performances of the proposed algorithm are illustrated in Fig.~\ref{fig:nash-cournot-exact}. 
We then investigate the above two cases with the inexact solver described in Algorithm~\ref{alg:proj-stoch-subgrad} with $T^{(k)}_i \coloneqq \ceil{0.01k} + 10$. 
The performances of this substitution are illustrated in Fig.~\ref{fig:nash-cournot-inexact}. 
The first 150 points of Fig.~\ref{fig:nash-cournot-exact} (b) and Fig.~\ref{fig:nash-cournot-inexact} (b) are discarded to better illustrate the trends throughout the iterations. 
In both cases, we see that the normalized distances to the true SNE decrease quickly in the first $2 \times 10^4$ iterations, although in the experiments using the inexact solver, the relative lengths of the updating step fluctuate dramatically. 
Nevertheless, the convergence speeds to the true SNE slow down as the iterations proceed, mainly dominated by the convergence rates of the estimated parameters to their true correspondences. 
It remains an interesting question to rigorously derive an upper bound for the convergence rate of Algorithm~\ref{alg:assemble}, and investigate its dependence on the dimensions of unknown parameters as well as those of decision vectors.

\begin{figure}
    \centering
    \includegraphics[width=\textwidth]{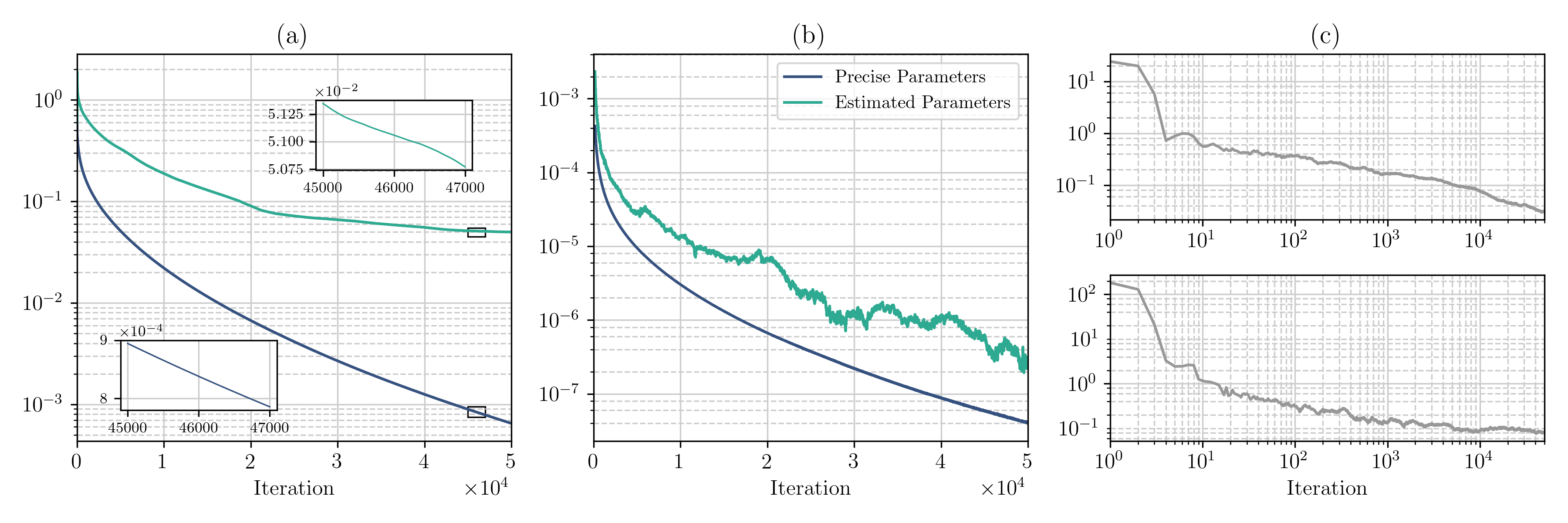}
    \caption{Performances of Algorithm~\ref{alg:assemble} with Exact Solver.
    (a) The normalized distances to the true SNE, i.e., $\frac{1}{N}\sum_{i \in \playerN} \norm{y^{i(k)}_{i} - y^{i*}_i}_2/\norm{y^{i*}_i}_2$. (b) The relative lengths of the updating step at each iteration, i.e., $\frac{1}{N}\sum_{i \in \playerN} \norm{y^{i(k+1)}_{i} - y^{i(k)}_i}_2/\norm{y^{i(k)}_i}_2$. (c Top) The normalized distances ofthe estimated weights to the precise weights, i.e., $\frac{1}{N}\sum_{i \in \playerN} (\sum_{j \in \neighbN{+}{i}}\norm{\hat{P}^{(k)}_{ji}\hat{H}^{(k)}_{j} - P^*_{ji}H^{*}_{j}}^2_2)^{1/2}/(\sum_{j \in \neighbN{+}{i}}\norm{P^*_{ji}H^{*}_{j}}^2_2)^{1/2}$. (c Bottom) The normalized distances of the estimated biases to the precise biases, i.e., $\frac{1}{N}\sum_{i \in \playerN}\abs{b^{(k)}_i - b^*_i}/\abs{b^*_i}$. 
    }
    \label{fig:nash-cournot-exact}
\end{figure}

\begin{figure}
    \centering
    \includegraphics[width=\textwidth]{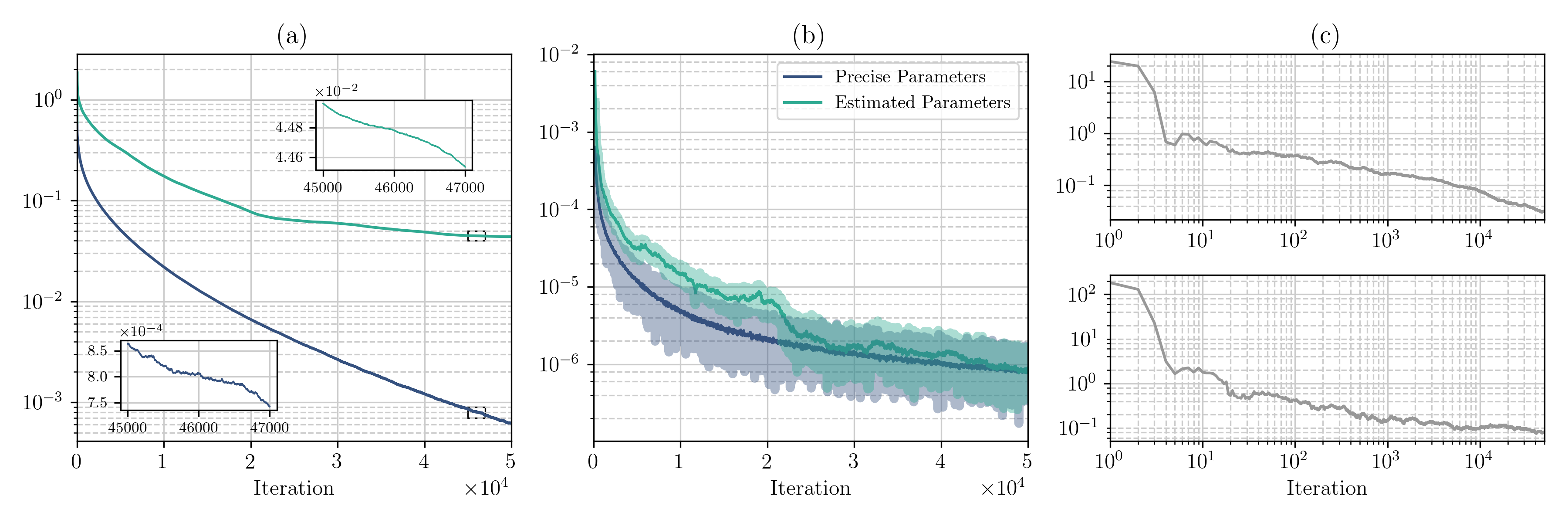}
    \caption{Performances of Algorithm~\ref{alg:assemble} with Inexact Solver. (a) The normalized distances to the true SNE, i.e., $\frac{1}{N}\sum_{i \in \playerN} \norm{y^{i(k)}_{i} - y^{i*}_i}_2/\norm{y^{i*}_i}_2$. (b) The relative lengths of the updating step at each iteration, i.e., $\frac{1}{N}\sum_{i \in \playerN} \norm{y^{i(k+1)}_{i} - y^{i(k)}_i}_2/\norm{y^{i(k)}_i}_2$. 
    The thick and semi-transparent line denotes the real fluctuations of the metric, while the solid and thin line displays the simple moving averages of the metric with a window size of 80.
    (c Top) The normalized distances of the estimated weights to the precise weights, i.e., $\frac{1}{N}\sum_{i \in \playerN} (\sum_{j \in \neighbN{+}{i}}\norm{\hat{P}^{(k)}_{ji}\hat{H}^{(k)}_{j} - P^*_{ji}H^{*}_{j}}^2_2)^{1/2}/(\sum_{j \in \neighbN{+}{i}}\norm{P^*_{ji}H^{*}_{j}}^2_2)^{1/2}$. 
    (c Bottom) The normalized distances of the estimated biases to the precise biases, i.e., $\frac{1}{N}\sum_{i \in \playerN}\abs{b^{(k)}_i - b^*_i}/\abs{b^*_i}$. 
    }
    \label{fig:nash-cournot-inexact}
\end{figure}

\subsection{Other Motivating Examples of Locally Coupled Network Games}
\begin{example}(Cognitive radio systems)
Consider the problem of designing a cognitive radio system whose transmissions are over single-input single-output (SISO) frequency-selective channels as discussed in \citeApndx{pang2010design, scutari2012monotone}. 
It is composed of $N^p$ primary users (PUs), $N$ secondary users (SUs), and $n^s$ available subcarriers. 
These $N$ SUs, indexed by $\playerN \coloneqq \{1, \ldots, N\}$, participate in the frequency resource allocation game, where each SU $i$ competes against each other to maximize its own information rate by determining its power allocation vector $x_i \in \rset{n^s_i}{}$ over the $n^s_i$ subcarriers available to it. 
Here, the objective function of each SU $i \in \playerN$ can often be expressed as:
\begin{equation}\label{eq:cr-objt}
\begin{split}
& \mathbb{J}_i(x_{i}; x^+_{i}, w^*_i) \coloneqq 
\bexpt{\xi_i}{-\sum_{s \in \mathcal{S}_i} J_{i,s}(x_i; x^+_i, w^*_i, \xi_i(s))}, \text{where} \\
& J_{i,s}(x_i; x^+_i, w^*_i, \xi_i(s)) \coloneqq \log(1 + \frac{\abs{H_{ii}(s)}^2 [x_{i}]_s}{\sigma_i^2(s) + \sum_{s \in \mathcal{S}_j, j \neq i} \abs{H_{ij}(s)}^2[x_j]_s + \xi_i(s) })
\end{split}
\end{equation}
where $\mathcal{S}_i \coloneqq \{1, \ldots, n^s_i\}$ is the index set for the subcarriers accessible to SU $i$, $\sigma^2_i(s)$ denotes the thermal noise power over the subcarrier $s$, $H_{ij}(s)$ is the channel transfer function between the secondary transmitter $j$ and the receiver $i$, $[x_i]_s$ represents the entry of vector $x_i$ associated with the power allocation decision of subcarrier $s$, and $\xi_i(s)$ represents uncertainty in the realization of the objective. 
In this network game, the communication topology is constructed in a way such that each SU $i$ can communicate with the SUs who have the direct competing interest in occupying a certain subcarrier $s \in \mathcal{S}_i$, i.e., $\mathcal{N}^+_i \coloneqq \{j \in \playerN \mid (j, i) \in \mathcal{E}\}$ satisfies $\mathcal{N}^+_i = \{j \in \playerN \mid \exists s \in \mathcal{S}_i \text{ s.t. } s \in \mathcal{S}_j, j \neq i\}$.
Assume that $\abs{H_{ii}(s)}^2$ is available to each SU $i$, while $\sigma_i^2(s)$ and $\{\abs{H_{ij}(s)}^2\}_{j \in \neighbN{}{-i}}$ are the unknown parameters to be learnt, i.e., $w^*_i \coloneqq [\sigma_i^2(s); [H_{ij}(s)]_{j \in \neighbN{}{-i}}]_{s \in \mathcal{S}_i}$. 
The associated feasible region $\mathcal{W}_i$ is assumed to be some non-negative box set. 

Furthermore, when each SU $i$ tries to minimize the above objective, it should fulfill some local and global constraints such that it works within some prescribed transmit power and induces tolerable degradation on PU's performance. 
We use $Q_{pi}(s)$ to represent the channel transfer function between the secondary transmitter $i$ and the primary receiver $p$ over the subcarrier $s$, and $I^{\text{tot}}_{pi}$ the maximum interferences allowed to be generated by the SU $i$ at the primary receiver $p$ over the whole spectrum.  
The local feasible set of each SU $i$ is defined as follows:
\begin{equation}\label{eq:cr-local-cnstr}
\begin{split}
& \mathcal{X}_i \coloneqq \{x_i \in \rset{n^s_i}{}\mid \bzero \leq x_i\leq b_i, \bone_{n^s_i}^Tx_i \leq \bar{\kappa} \cdot \bone_{n^s_i}^Tb_i, \\
& \qquad {\textstyle\sum}_{s \in \mathcal{S}_i}\abs{Q_{pi}(s)}^2[x_i]_s \leq I^{\text{tot}}_{pi},\;\forall p\}, 
\end{split}
\end{equation}
where $b_i \in \rset{n^s}{}$ is a vector with all entries positive, and $\bar{\kappa} \in (0, 1)$ is some prescribed constant. 
In addition to the local constraints in \eqref{eq:cr-local-cnstr}, it is possible that under some circumstances, the power allocation vector $\{x_i\}$ of all SUs should collectively satisfy the following global constraints for $p = 1, \ldots, N^p$:
\begin{equation}\label{eq:cr-glb-cnstr}
\begin{split}
\sum_{i\in\playerN, s \in \mathcal{S}_i} \abs{Q_{pi}(s)}^2[x_i]_s \leq I^{\text{peak}}_p(s), \forall s = 1, \ldots, n^s, \\
\sum_{i\in\playerN}\sum_{s \in \mathcal{S}_i} \abs{Q_{pi}(s)}^2[x_i]_s \leq I^{\text{tot}}_p,
\end{split}
\end{equation}
where $I^{\text{peak}}_p(s)$ and $I^{\text{tot}}_p$ represent the maximum interferences allowed to be generated by all SUs at the primary receiver $p$ over the subcarrier $k$ and the whole spectrum, respectively. 
To satisfy Assumption~\ref{asp:olse} (ii), suppose at each iteration $k \in \nset{}{}$, each SU $i$ will receive the realized values $[{J_{i,s}]_{s \in \mathcal{S}_i}}$ of all the accessible subcarriers, and Subroutine~\ref{alg:param-est} is carried out to estimate the unknown parameters for each subcarrier separately. 
For the fulfillment of other assumptions, we refer the interested readers to \citeApndx{pang2010design, scutari2012monotone} for a more detailed discussion. 
\end{example}

\bibliographystyleApndx{plainnat} 
\bibliographyApndx{referappdx}
\end{document}